\newtheorem{theorem}{Theorem}[section]
\newtheorem{lemma}[theorem]{Lemma}
\newtheorem{claim}{Claim}[theorem]
\newtheorem{problem}[theorem]{Problem}
\newtheorem{cor}[theorem]{Corollary}
\newtheorem{prop}[theorem]{Proposition}
\theoremstyle{definition}
\newenvironment{subproof}[1][Proof]{\begin{proof}[#1]}{\end{proof}}
\newcommand{\mbn}{\mathbb{N}}
\newcommand{\mbz}{\mathbb{Z}}
\newcommand{\mcb}{\mathcal{B}}
\newcommand{\mcc}{\mathcal{C}}
\newcommand{\mcf}{\mathcal{F}}
\newcommand{\mcl}{\mathcal{L}}
\newcommand{\mcp}{\mathcal{P}}
\newcommand{\mcq}{\mathcal{Q}}
\newcommand{\mcr}{\mathcal{R}}
\newcommand{\mcs}{\mathcal{S}}
\newcommand{\mct}{\mathcal{T}}
\title{Packing $A$-paths of length zero modulo a prime}
\author{Robin Thomas \and Youngho Yoo}
\begin{document}

\centerline{\Large \bf Packing $A$-paths of length zero modulo a prime}

\bigskip
\bigskip

\centerline{{\bf Robin Thomas}%
\footnote{Partially supported by NSF under Grant No.~DMS-1700157.}
}
\smallskip
\centerline{and}
\smallskip
\centerline{{\bf Youngho Yoo}%
\footnote{This work was done while the second author was at Georgia Tech. Partially supported by the Natural Sciences and Engineering Research Council of Canada (NSERC), PGSD2-532637-2019. }
}
\bigskip
\centerline{$^1$School of Mathematics}
\centerline{Georgia Institute of Technology}
\centerline{Atlanta, Georgia  30332-0160, USA}
\bigskip
\centerline{$^2$Department of Mathematics}
\centerline{Texas A\&M University}
\centerline{College Station, Texas 77843-3368, USA}
\bigskip

\begin{abstract}
It is known that $A$-paths of length $0$ mod $m$ satisfy the Erd\H{o}s-P\'osa property if $m=2$ or $m=4$, but not if $m > 4$ is composite.
We show that if $p$ is prime, then $A$-paths of length $0$ mod $p$ satisfy the Erd\H{o}s-P\'osa property.
More generally, in the framework of undirected group-labelled graphs, we characterize the abelian groups $\Gamma$ and elements $\ell \in \Gamma$ for which the Erd\H{o}s-P\'osa property holds for $A$-paths of weight $\ell$.
\end{abstract}

\section{Introduction}
Let $A$ be a vertex set.
An \emph{$A$-path} is a nontrivial path whose intersection with $A$ is exactly its endpoints.
A classical result of Gallai \cite{Gal}, which generalizes the Tutte-Berge formula for matchings to $A$-paths, shows that for every graph $G$ and every positive integer $k$, either $G$ contains $k$ disjoint $A$-paths or there is a set of at most $2k-2$ vertices intersecting every $A$-path.
Mader \cite{Mad} showed that the same conclusion holds more generally for $\mcs$-paths, where $\mcs$ is a partition of $A$ and an $\mcs$-path is an $A$-path whose endpoints are in distinct parts of $\mcs$.

This was further generalized by Chudnovsky et al. \cite{ChuGeeGer} to \emph{directed group-labelled graphs}.
Let $\Gamma$ be a group with additive operation and identity 0 (following the notation in \cite{ChuGeeGer}), where $\Gamma$ may be nonabelian.
A \emph{directed $\Gamma$-labelled graph} is a pair $(\vec G,\gamma)$ where $\vec G$ is an orientation of an undirected graph $G$ and $\gamma:E(G)\to\Gamma$ is a \emph{$\Gamma$-labelling} of $G$.
The \emph{weight} of a walk $W = v_0e_1v_1\dots v_{m-1}e_mv_m$ in $G$ is defined to be $\gamma(W) = \gamma(e_1,v_1) + \dots + \gamma(e_m,v_m)$, where for an edge $e=uv$ oriented from $u$ to $v$, $\gamma(e,v)=\gamma(e)$ and $\gamma(e,u)=-\gamma(e)$.
We say that $W$ is \emph{$\Gamma$-nonzero} (or simply \emph{nonzero} if there is no ambiguity) if $\gamma(W)\neq 0$.

\begin{theorem}
[Theorem 1.1 in \cite{ChuGeeGer}]
\label{thm:nzApathDir}
Let $\Gamma$ be a group and let $(\vec{G},\gamma)$ be a directed $\Gamma$-labelled graph with $A \subseteq V(G)$.
Then for all positive integers $k$, either $(\vec{G},\gamma)$ contains $k$ disjoint $\Gamma$-nonzero $A$-paths or there is a set of at most $2k-2$ vertices intersecting every $\Gamma$-nonzero $A$-path. 
\end{theorem}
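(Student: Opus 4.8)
The plan is to prove the contrapositive: assuming that no set of at most $2k-2$ vertices meets every $\Gamma$-nonzero $A$-path --- call such a set a \emph{blocker} --- I would exhibit $k$ pairwise disjoint $\Gamma$-nonzero $A$-paths, arguing by induction on $|V(G)|+|E(G)|$ and passing to smaller instances with parameter $k$ or $k-1$. The preliminary reductions are standard but must be carried out with care, since $\Gamma$ may be nonabelian and so labels are best tracked as orientation-sensitive half-edge labels: we may assume $G$ is connected; that $A$ is independent, because an edge with both ends in $A$ is already an $A$-path, which if nonzero lets us delete its two ends and induct with $k-1$ and if zero may simply be deleted; that loops are removed and parallel edges simplified; and that every vertex of $V(G)\setminus A$ has degree at least $3$, a vertex of degree at most $1$ being deleted and one of degree $2$ being suppressed by merging its two incident edges into one edge carrying the concatenated weight.

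The main tool is the \emph{shifting} operation: for $v\in V(G)\setminus A$ and $\delta\in\Gamma$, compose $\delta$ into the half-edge label at $v$ on every edge incident with $v$, and $-\delta$ into the half-edge label at the opposite end. An $A$-path either avoids $v$ or passes through $v$ using exactly two half-edges at $v$ whose combined contribution is unchanged; hence every $A$-path retains its weight and the blocker hypothesis is preserved, while weights of walks ending at $v$ can change. Shifting repeatedly, I would normalize so that in each component $C$ of $G-A$ a fixed spanning tree carries the all-zero labelling, with the result that the group content of $C$ is concentrated on its non-tree edges and on the edges between $C$ and $A$.

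The heart of the argument is an analysis of a single component $C$ of $G-A$, rooted at a vertex $r$: for each edge $e$ from a vertex of $A$ into $C$, set $\phi(e)\in\Gamma$ to be the weight of the walk from that vertex of $A$ across $e$ and then along the zero-weight tree to $r$. The dichotomy to establish is that either a non-tree edge of $C$ or an edge from $A$ to $C$ can be used, together with the shifting freedom, to reroute some $A$-path through $C$ so as to alter its weight --- which should produce a $\Gamma$-nonzero $A$-path whose removal sets up the induction with $k-1$ --- or else no such rerouting exists, in which case any $A$-path that crosses $C$ between ports $e$ and $e'$ has weight exactly $\phi(e)-\phi(e')$ and so is $\Gamma$-nonzero if and only if $\phi(e)\ne\phi(e')$. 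In this ``rigid'' alternative the group has done its work: the instance becomes a colouring of the ports from $A$ into each component, and packing disjoint $\Gamma$-nonzero $A$-paths reduces to the purely combinatorial problem of packing disjoint paths joining ports of distinct colour, to be settled by a Gallai/Mader-style deficiency argument run in place (since the group-labelled statement contains Mader's theorem, quoting Mader would only go part of the way).

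The main obstacle is the factor of $2$, which the naive recursion does not respect: deleting a nonzero $A$-path $P$ and recursing with $k-1$ would need $G-V(P)$ to have no blocker of size $2k-4$, but a blocker $Y$ there gives the blocker $Y\cup V(P)$ of size $2k-4+|V(P)|$ in $G$, consistent with the hypothesis as soon as $|V(P)|\ge 3$ --- the regime the reductions force. So I expect the real work to lie in choosing, and reasoning with, a \emph{minimum} blocker $X$ of size at least $2k-1$: understanding how $X$ meets the components of $G-A$ and the $A$-paths through them, and showing that either $X$ admits a local rearrangement into a strictly smaller blocker (a contradiction) or every component is rigid as above, so that the whole problem collapses to the deficiency argument. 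Keeping all of the shifting and rerouting computations valid for nonabelian $\Gamma$ is a persistent complication throughout.
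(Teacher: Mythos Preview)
The paper does not prove this theorem; it is quoted from \cite{ChuGeeGer} and stated without proof (note the trailing \textsc{qed} box in the statement). There is therefore no in-paper argument to compare your proposal against.

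Taken on its own merits, your sketch correctly locates its central weakness and then leaves it unrepaired. The natural induction step --- find a nonzero $A$-path $P$, delete $V(P)$, recurse with $k-1$ --- needs $G-V(P)$ to have no blocker of size $2k-4$, but after your reductions $|V(P)|\ge 3$, so a size-$(2k-4)$ blocker in $G-V(P)$ yields only a size-$(2k-1)$ blocker in $G$, which is consistent with the hypothesis. Your proposed remedy, to study a minimum blocker $X$ and argue that either $X$ can be locally shrunk or every component of $G-A$ is ``rigid,'' is a statement of intent rather than an argument: you offer no mechanism linking minimality of $X$ to rigidity, and in the rigid case you still owe the full Mader $\mcs$-path theorem, which is precisely the special case of the present statement obtained by taking $\Gamma$ free on the colour classes --- so ``running a Gallai/Mader-style deficiency argument in place'' is the hard part, not a routine wrap-up. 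The actual proof in \cite{ChuGeeGer} does not proceed by deleting a single path and recursing on $k$; it establishes an exact min-max formula generalizing Mader's, from which the $2k-2$ bound is read off directly.
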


With suitable choices of $\Gamma$ and $\gamma$, one immediately obtains the results of Gallai and Mader, and many more.
These results, while derived from exact min-max relations, can be viewed more broadly as instances of approximate packing-covering dualities.
A family $\mcf$ of (possibly group-labelled) graphs is said to satisfy the \emph{Erd\H{o}s-P\'osa property} if there exists a function $f:\mbn \to \mbn$ such that, for every (possibly group-labelled) graph $G$ and positive integer $k$, either $G$ contains $k$ disjoint subgraphs each in $\mcf$ or there is a set of at most $f(k)$ vertices intersecting every subgraph of $G$ in $\mcf$.
The name comes from the seminal result of Erd\H{o}s and P\'osa \cite{ErdPos} that this property holds for the family of cycles with $f(k)=O(k\log k)$.
In this light, Theorem \ref{thm:nzApathDir} can be reformulated as the statement that $\Gamma$-nonzero $A$-paths in directed $\Gamma$-labelled graphs satisfy the Erd\H{o}s-P\'osa property with $f(k)=2k-2$.

Wollan \cite{WolPath} considered the analogous problem of packing $\Gamma$-nonzero $A$-paths in \emph{undirected group-labelled graphs}.
Given an \emph{abelian} group $\Gamma$, an \emph{undirected $\Gamma$-labelled graph} is a pair $(G,\gamma)$ where $G$ is an undirected graph and $\gamma:E(G)\to\Gamma$.
The \emph{weight} of a subgraph $H \subseteq G$ is $\gamma(H)=\sum_{e \in E(H)}\gamma(e)$, and $H$ is \emph{$\Gamma$-nonzero} (or simply \emph{nonzero}) if $\gamma(H)\neq 0$.
Note that the two models of group-labellings are equivalent if every nonzero element of $\Gamma$ has order two.
Relaxing the bound on the Erd\H{o}s-P\'osa function, Wollan showed that $\Gamma$-nonzero $A$-paths in undirected $\Gamma$-labelled graphs satisfy the Erd\H{o}s-P\'osa property with $f(k)=O(k^4)$ (Theorem 1.1 in \cite{WolPath}).
In particular, for all positive integers $m$, $A$-paths of length $\neq 0 \mod m$ satisfy the Erd\H{o}s-P\'osa property.

In this paper, we address the opposite problem of packing $A$-paths of weight 0, which we call \emph{$\Gamma$-zero} (or simply \emph{zero}) $A$-paths, in group-labelled graphs.
This was first investigated by Bruhn, Heinlein, and Joos who showed that even $A$-paths satisfy the Erd\H{o}s-P\'osa property (Theorem 7 in \cite{BruHeiJoo}), whereas $A$-paths of length $0 \mod m$ do \emph{not} satisfy the Erd\H{o}s-P\'osa property for all composite $m>4$ (Proposition 8 in \cite{BruHeiJoo}).
Interestingly, the composite number 4 does not adhere to this trend, as shown by Bruhn and Ulmer:

\begin{theorem}[Theorem 1 in \cite{BruUlm}]
\label{thm:apathsmod4}
$A$-paths of length 0 modulo 4 satisfy the Erd\H{o}s-P\'osa property.
\end{theorem}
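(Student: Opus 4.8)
The plan is a two-stage reduction to the mod-$2$ case. We may assume, by \cite{BruHeiJoo}, that even $A$-paths have the Erd\H{o}s--P\'osa property, with some function $h$. The reason to expect mod $4$ to behave like mod $2$ while composite $m>4$ does not is structural: $\{0,2\}$ is the unique nontrivial proper subgroup of $\mbz_4$ and it has order $2$, so relative to the even $A$-paths, distinguishing length $0$ from length $2$ modulo $4$ is a single $\mbz_4/\{0,2\}\cong\mbz_2$ bit. (One cannot simply manufacture an auxiliary group-labelled graph in which length-$0$-mod-$4$ $A$-paths become the \emph{nonzero} paths of Wollan's theorem~\cite{WolPath}: ``zero'' is a subgroup condition and ``nonzero'' its complement, and it is exactly the impossibility of swapping the two in general that makes the property fail for composite $m>4$.)

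Fix $G$, $A\subseteq V(G)$, $k\in\mbn$, and a large parameter $K=K(k)$. Apply the even-$A$-path property with parameter $K$. Either $G$ has a set $S$ with $|S|\le h(K)$ meeting every even $A$-path --- hence meeting every $A$-path of length $\equiv 0\pmod 4$, since each such path is even --- and we finish with $f(k):=h(K)$; or $G$ has $K$ pairwise disjoint even $A$-paths. In the second case sort them by length modulo $4$: if $k$ of them have length $\equiv 0\pmod 4$ we are done, and otherwise, choosing $K$ large enough, we obtain $t\gg k$ pairwise disjoint $A$-paths $Q_1,\dots,Q_t$ each of length $\equiv 2\pmod 4$. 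It remains to prove that $t$ pairwise disjoint $A$-paths of length $\equiv 2\pmod 4$ force $k$ pairwise disjoint $A$-paths of length $\equiv 0\pmod 4$, or else a bounded hitting set for the latter. No concatenation trick is available here --- gluing two $A$-paths makes a vertex of $A$ internal --- so each length-$0$-mod-$4$ path must be built afresh.

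For the remaining statement, suppose $G$ has no $k$ pairwise disjoint $A$-paths of length $\equiv 0\pmod 4$; take a maximum such family $R_1,\dots,R_s$ (so $s\le k-1$), chosen to minimize the total number of edges, and put $X=\bigcup_i V(R_i)$, so that every $A$-path of $G-X$ has length $\not\equiv 0\pmod 4$. The surplus $t$ then forces many of the $Q_i$ to survive in $G-X$, or to meet $X$ only along the $R_j$ in a controlled way; from this one should extract --- by a linkage/flow argument, or from the structure underlying the even-$A$-path theorem --- a well-connected core met by many disjoint weight-$2$ $A$-paths. Inside such a core there is room to append to one of these a short detour that changes its length by $2\pmod 4$ while preserving disjointness from the others and from $\{R_i\}$, producing a length-$0$-mod-$4$ $A$-path that enlarges the family --- a contradiction. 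The opposite horn of the dichotomy is that $G$ is ``thin'' around all the relevant $A$-paths, where bounded treewidth yields the hitting set directly.

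The main obstacle is exactly this rerouting step. First, converting ``many disjoint weight-$2$ $A$-paths'' into usable connectivity is genuinely global and is where a linkage theorem or the internals of \cite{BruHeiJoo} must be invoked. Second, one needs the conversion for $k$ paths simultaneously, with pairwise disjoint detours, which forces $t$ and hence $K$ to be polynomially large in $k$ and makes the resulting bound on $f$ the delicate bookkeeping. Third, it is essential that only a single $\mbz_2$-worth of adjustment is ever required; for composite $m>4$ this localization is impossible, consistent with the known failure of the Erd\H{o}s--P\'osa property in that range. Separating the ``thin'' case from the ``richly connected'' case, and tracking how the new detours meet the fixed paths $R_j$, is where I expect the bulk of the technical effort to lie.
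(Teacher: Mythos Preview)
The paper does not prove Theorem~\ref{thm:apathsmod4}; it is quoted from \cite{BruUlm} as an established result (note the terminal \qed), so there is no proof here to compare against directly.

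Your proposal is explicitly a plan with acknowledged gaps, and the gap is essentially the whole proof. The reduction via even $A$-paths is sound but does not reduce the difficulty: the residual claim---that $t(k)$ disjoint $A$-paths of length $\equiv 2\pmod 4$ force either $k$ disjoint $0$-mod-$4$ $A$-paths or a bounded hitting set for the latter---is equivalent to the theorem modulo the known even-$A$-path result, not a simplification of it. Your proposed mechanism for this residual claim, ``extract a well-connected core met by many weight-$2$ paths and append a short detour changing the length by $2$,'' names the desired outcome but gives no way to produce it; neither a generic linkage theorem nor the frame argument of \cite{BruHeiJoo} yields such a core, and nothing in the outline controls where the parity-flipping detours live or why $k$ of them can be made disjoint from one another and from the fixed $R_j$.

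For comparison, the actual proof in \cite{BruUlm} uses the tangle-based architecture that the present paper adapts for odd primes in Section~\ref{sec:Apath0modp}: associate a tangle to a minimal counterexample (Lemma~\ref{lemmatangleApaths} here is Lemma~8 of \cite{BruUlm}), extract a large wall, and run a structure theorem whose outcomes supply---via results such as Lemmas~\ref{nicelinkmodellemma} and~\ref{nicelinkwalllemma}, also taken from \cite{BruUlm}---many disjoint nonzero cycles together with $A$-linkages nicely linking to them. Your intuition that $m=4$ is special because only a single $\mbz/2\mbz$-bit of adjustment is ever needed is correct, and it is exactly why a single nonzero cycle per path suffices in that setting; what your plan is missing is the tangle/wall machinery that actually locates those cycles and guarantees the required disjointness.
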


In the same paper, they asked whether the Erd\H{o}s-P\'osa property holds for $A$-paths of length $0\mod p$ when $p$ is an odd prime (Problem 22 in \cite{BruUlm}).
Our main result is an affirmative answer to their question:

\begin{theorem}
\label{apathsmodptheorem}
Let $p$ be an odd prime. Then $A$-paths of length 0 modulo $p$ satisfy the Erd\H{o}s-P\'osa property.
\end{theorem}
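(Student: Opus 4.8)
The plan is to establish the following quantitative form, which gives the theorem on taking $\gamma\equiv 1$: for each odd prime $p$ there is a function $f\colon\mbn\to\mbn$ such that for every undirected $\mbz/p\mbz$-labelled graph $(G,\gamma)$, every $A\subseteq V(G)$, and every positive integer $k$, either $(G,\gamma)$ contains $k$ pairwise disjoint zero $A$-paths or some set of at most $f(k)$ vertices meets every zero $A$-path. (It is essential to allow an arbitrary labelling $\gamma$, not just $\gamma\equiv 1$, so that the argument can be run inductively.) We induct on $k$, and for fixed $k$ on $|V(G)|+|E(G)|$. We may assume $G$ is connected and that every vertex and edge lies on some zero $A$-path, and, by applying Gallai's theorem to ordinary $A$-paths (disregarding the labelling), that $(G,\gamma)$ has $2k$ pairwise disjoint $A$-paths, since otherwise a set of at most $4k-2$ vertices meets every $A$-path, hence every zero one. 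If $k$ of these $2k$ paths are zero we are done; the heart of the matter is to convert (mostly) nonzero $A$-paths into zero ones, and this is exactly where primality enters: in $\mbz/p\mbz$ the only subgroup containing a nonzero element is all of $\mbz/p\mbz$, so a single ``reservoir of nonzero cycles'' grants complete control over which weights are realizable, whereas for composite $m>4$ one can get trapped in a proper subgroup -- the mechanism behind the known non-examples.

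We then split on tree-width. Fix a threshold $t=t(k)$. If $\operatorname{tw}(G)\le t$, we appeal to a lemma -- valid for every modulus, with no primality needed -- that zero $A$-paths have the Erd\H{o}s--P\'osa property in graphs of tree-width at most $t$, with bound depending only on $k$ and $t$. This is proved by a compactness/pumping argument along a tree-decomposition: a partial path crossing a bag is described by a bounded amount of data (its two dangling ends inside the bag and the weight accumulated so far, of which there are only $p$ choices), so a sufficiently long ``path-like'' stretch of the decomposition either contains a bounded separator meeting all zero $A$-paths that traverse it, or can be exploited to extract one more disjoint zero $A$-path. If instead $\operatorname{tw}(G)>t$, the grid/wall theorem provides a wall $W$ in $G$ of height tending to infinity with $k$.

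Next we analyze $W$ and its interaction with $A$. If fewer than $2k$ pairwise disjoint paths join $A$ to $W$, a separator of bounded size lies between $A$ and the bulk of $W$; a zero $A$-path either avoids that separated part -- so lives in a strictly smaller instance, where induction applies -- or runs through the bounded separator, and recursing on the pieces finishes this case. Otherwise we have a large wall together with many disjoint $A$-paths reaching it. If every cycle using only edges of $G$ inside (a neighbourhood of) $W$ has weight $0$, then this part of $G$ is ``weight-trivial'' and can be simplified, reducing the instance -- but making this precise in the \emph{undirected} model, where the customary reduction by a vertex potential fails for odd-order elements, takes real care and is one of the delicate points. Otherwise $W$ carries a nonzero cycle; since a large wall containing one nonzero cycle contains many pairwise disjoint ones, for the appropriate $t(k)$ we obtain $k$ pairwise disjoint ``flexible gadgets'' inside $W$, each a chunk of the wall holding about $p-1$ disjoint nonzero cycles, so that a path entering and leaving the gadget can be steered through it to absorb any prescribed element of $\mbz/p\mbz$ (the enabling fact, that $p-1$ two-element subsets of $\mbz/p\mbz$ with distinct entries have full sumset, is the Cauchy--Davenport theorem). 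Routing $k$ disjoint $A$-paths into distinct gadgets and correcting the weight of each to $0$ then yields $k$ pairwise disjoint zero $A$-paths.

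I expect the main obstacle to be this wall case: keeping simultaneously disjoint the $A$-to-wall linkage, the weight-correcting gadgets inside the wall, and the $k$ output paths, while ensuring that each output path really does traverse its gadget along a sub-path realizing the needed correction -- and, in tandem, carrying out the ``weight-trivial part is reducible'' reduction correctly in the undirected model and upgrading ``one nonzero cycle'' to ``many disjoint weight-flexible gadgets''. The bounded-tree-width lemma is routine in spirit but must be arranged so its bound depends only on $k$ and $t$, and the two regimes must be combined into a single function $f$; relative to this, the wall analysis is where the genuine difficulty lies.
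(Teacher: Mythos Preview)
Your proposal shares the paper's central insight --- use Cauchy--Davenport on $\mbz/p\mbz$ to turn ``$p-1$ disjoint nonzero cycles threaded on a path'' into a weight-correcting gadget --- and the high-level architecture (find a large wall, link $A$ to it, correct weights) is broadly in the same spirit. The paper organizes this differently: rather than a tree-width split, it works directly with a tangle in $G-A$ associated to a minimal counterexample, and then invokes a structure theorem (from \cite{ThoYooa}) which outputs either a $\Gamma$-odd $K_t$-model, a facially $\Gamma$-odd wall, a $\Gamma$-bipartite wall with a pure $\Gamma$-odd linkage, or --- after deleting a bounded set --- a $\Gamma$-bipartite $3$-block. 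The first three outcomes are handled by cycle-chain arguments much as you sketch.

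The genuine gap in your plan is the case you call ``weight-trivial'': when every cycle in (a neighbourhood of) the wall has weight $0$. You propose to ``simplify, reducing the instance'', but in the undirected model over $\mbz/p\mbz$ with $p$ odd there is no shifting operation that zeroes out labels (shifting requires $2g=0$), so there is no evident reduction; and the $B$-bridges through which $A$ attaches to the $\Gamma$-bipartite block can carry arbitrary weights, so the instance does not become smaller or simpler in any obvious sense. This is not a side issue --- it is the main technical contribution of the paper. The paper does \emph{not} reduce the instance here; instead it proves, for this $\Gamma$-bipartite $3$-block $B$, a bespoke approximate Menger theorem for $A$-$U$-paths of a \emph{fixed} weight $\ell$ (Lemma~\ref{lemmamengerellpaths}), handling the subtle failure that an $A$-$U$-path in an auxiliary graph need not lift to one of weight $\ell$ in $G$ because the initial $A$-$V(B)$ segment and the subsequent $B$-path may be forced through the same $B$-bridge. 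This Menger-type lemma is then iterated over all $\ell\in\Gamma$ (Lemma~\ref{bip3blockApathslemma}) to produce two linkages of weights $\ell$ and $-\ell$ nicely linking to a subwall, which are combined (via a bipartite Ramsey step) into $k$ disjoint zero $A$-paths. Your outline does not contain a mechanism of this kind, and without it the ``weight-trivial'' branch does not close.

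A secondary point: your step ``a large wall containing one nonzero cycle contains many pairwise disjoint ones, hence $k$ gadgets'' also needs justification beyond what you wrote; in the paper this is exactly what the structure theorem of \cite{ThoYooa} supplies, distinguishing the several ways nonzero cycles can be arranged relative to the wall.
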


Using Theorem \ref{apathsmodptheorem}, we characterize the abelian groups $\Gamma$ and elements $\ell \in \Gamma$ for which $A$-paths of weight $\ell$ in undirected $\Gamma$-labelled graphs satisfy the Erd\H{o}s-P\'osa property:

\begin{restatable}{theorem}{resApathsfixedwtchar}
\label{thm:Apathsfixedwtchar}
Let $\Gamma$ be an abelian group and let $\ell \in \Gamma$.
Then, in undirected $\Gamma$-labelled graphs, $A$-paths of weight $\ell$ satisfy the Erd\H{o}s-P\'osa property if and only if
\begin{itemize}
\item $\Gamma \cong (\mbz/2\mbz)^k$ where $k \in \mbn$ and $\ell = 0$,
\item $\Gamma \cong \mbz/4\mbz$ and $\ell \in \{0,2\}$, or
\item $\Gamma \cong \mbz/p\mbz$ where $p$ is prime (and $\ell \in \Gamma$ is arbitrary).
\end{itemize}
\end{restatable}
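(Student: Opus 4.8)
The strategy is to reduce the characterization to a handful of seeds that are already available --- Theorem~\ref{apathsmodptheorem} for $\mathbb{Z}/p\mathbb{Z}$ with $p$ odd, Theorem~\ref{thm:apathsmod4} for $\mathbb{Z}/4\mathbb{Z}$, the even $A$-path theorem of~\cite{BruHeiJoo} for $\mathbb{Z}/2\mathbb{Z}$, Theorem~\ref{thm:nzApathDir} for $\Gamma$-nonzero $A$-paths, and the counterexamples of~\cite{BruHeiJoo} for $\mathbb{Z}/m\mathbb{Z}$ with $m>4$ composite --- by means of four elementary reductions, all of which I would isolate as lemmas. \emph{(i) Translation:} if $\ell=2h$ for some $h\in\Gamma$, then subtracting $h$ from the label of an edge once for each of its ends that lies in $A$ turns every weight-$\ell$ $A$-path into a weight-$0$ $A$-path and back, without changing the underlying graph or $A$; hence the Erd\H{o}s-P\'osa property for $(\Gamma,\ell)$ and $(\Gamma,\ell')$ coincide whenever $\ell-\ell'\in 2\Gamma$. \emph{(ii) Subgroup monotonicity:} if $\Gamma_0\le\Gamma$ and $\ell\in\Gamma_0$, then a $\Gamma_0$-labelled graph is a $\Gamma$-labelled graph with the same weight-$\ell$ $A$-paths, so a family of counterexamples for $(\Gamma_0,\ell)$ is a family of counterexamples for $(\Gamma,\ell)$. \emph{(iii) Scaling:} multiplying every label by a unit $u$ identifies $(\Gamma,\ell)$ with $(\Gamma,u\ell)$. \emph{(iv) Lifting:} from a $\mathbb{Z}/m\mathbb{Z}$-labelled graph, lift each label to $\{0,\dots,m-1\}$ and replace each edge by two parallel copies labelled $c$ and $c-m$; then an $A$-path $P$ realizes integer weight $\ell_0\in\{0,1\}$ precisely when its original weight equals $\ell_0\bmod m$, since $0\le\tilde\gamma(P)\le m|E(P)|$ makes the truncation to two copies harmless, and this correspondence preserves disjointness and hitting sets; hence a counterexample for $(\mathbb{Z}/m\mathbb{Z},\ell_0\bmod m)$ lifts to one for $(\mathbb{Z},\ell_0)$ (and likewise along the projection $\Gamma_0\times\mathbb{Z}\twoheadrightarrow\Gamma_0\times\mathbb{Z}/m\mathbb{Z}$).

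For the ``if'' direction, reduction (i) sends $\mathbb{Z}/4\mathbb{Z}$ with $\ell=2$, and $\mathbb{Z}/p\mathbb{Z}$ ($p$ odd) with any $\ell$, to their weight-$0$ versions, which are Theorems~\ref{thm:apathsmod4} and~\ref{apathsmodptheorem}; and $\mathbb{Z}/2\mathbb{Z}$ with $\ell=1$ is literally ``$\Gamma$-nonzero'', so Theorem~\ref{thm:nzApathDir} gives $f(k)=2k-2$. The only remaining listed family is weight-$0$ $A$-paths over $\Gamma\cong(\mathbb{Z}/2\mathbb{Z})^k$: the case $k=1$ is the even $A$-path theorem, and I would treat general $k$ by induction on $k$, adapting that theorem's proof. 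I expect this to be the one genuinely new positive ingredient, and it cannot be obtained by simply invoking the $k=1$ case, because a weight-$0$ $A$-path for $\gamma=(\gamma_1,\dots,\gamma_k)$ must be $\gamma_i$-zero for every $i$ at once, and there is no graph operation that passes to ``the $\gamma_k$-zero $A$-paths''.

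For the ``only if'' direction, suppose $(\Gamma,\ell)$ is not on the list. Reduction (iv), together with its variant along $\Gamma_0\times\mathbb{Z}\twoheadrightarrow\Gamma_0\times\mathbb{Z}/m\mathbb{Z}$, disposes of every $\Gamma$ containing an element of infinite order (such a $\Gamma$ has a subgroup $\Gamma_0\times\mathbb{Z}$ carrying $\ell$, and projecting to a bad finite quotient lets one lift a finite counterexample); and reductions (i)--(iii) reduce the torsion case --- apart from the degenerate possibility that $\Gamma$ is an infinite-rank elementary abelian $2$-group, which is excluded by the dependence on $k$ of the ``if'' bound --- to $\Gamma$ finite. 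Running through the structure theorem, one is left with five ``minimal'' bad pairs $(\Gamma_0,\ell_0)$: $\mathbb{Z}/m\mathbb{Z}$ with $m>4$ composite and $\ell_0=0$; $\mathbb{Z}/m\mathbb{Z}$ with $m$ even composite and $\ell_0$ a generator; $(\mathbb{Z}/2\mathbb{Z})^2$ with $\ell_0\ne 0$; $\mathbb{Z}/p\mathbb{Z}\oplus\mathbb{Z}/p\mathbb{Z}$ with $p$ an odd prime and $\ell_0=0$; and $\mathbb{Z}/2\mathbb{Z}\oplus\mathbb{Z}/4\mathbb{Z}$ with $\ell_0=0$. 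The first type is~\cite{BruHeiJoo}, and I expect the second to follow from essentially the same construction with a generator in place of $0$ as the target.

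\textbf{The main obstacle is the three ``rank-$2$'' cases} $(\mathbb{Z}/2\mathbb{Z})^2$ (nonzero target), $\mathbb{Z}/p\mathbb{Z}\oplus\mathbb{Z}/p\mathbb{Z}$ ($p$ odd, target $0$), and $\mathbb{Z}/2\mathbb{Z}\oplus\mathbb{Z}/4\mathbb{Z}$ (target $0$). None of these reduces to a cited result: every proper subgroup that contains the target, and every cyclic projection of the labelling, is ``good'', so one cannot use (ii) or (iv), and one must instead design a brand-new gadget in which the maximum number of disjoint weight-$\ell_0$ $A$-paths stays bounded while the minimum hitting set grows without bound, even though through every rank-$1$ lens the instance is innocuous. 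Constructing and verifying such gadgets --- and confirming that the five-type list above is exhaustive, which is the bulk of the otherwise routine bookkeeping --- is where the real work of the theorem lies.
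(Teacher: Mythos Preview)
Your reductions (i)--(iii) are sound and match the paper, but the proposal has gaps exactly where you flag difficulty, and both dissolve under observations you are missing.

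For the positive case $\Gamma\cong(\mbz/2\mbz)^k$ with $\ell=0$: no induction or adaptation of the even-$A$-path proof is needed. Since every nonzero element has order two, the undirected and directed models of $\Gamma$-labelling are literally identical (the sign in the directed weight is irrelevant when $g=-g$), so the result follows at once from the directed zero-$A$-path theorem (Theorem~\ref{directedzeroApathstheorem}), which the paper proves in a few lines via the frame argument of \cite{BruHeiJoo}. Your remark that ``there is no graph operation that passes to the $\gamma_k$-zero $A$-paths'' is correct but irrelevant once one sees this.

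For the negative direction, both your infinite-group treatment and your ``rank-$2$ obstacle'' are overbuilt. The paper gives a single direct grid construction (Lemma~\ref{infinitectexlemma}) valid for \emph{every} infinite $\Gamma$ simultaneously---including infinite elementary abelian $2$-groups, where your appeal to ``dependence on $k$ of the bound'' is not a proof that no bound exists. For finite $\Gamma$ with $\ell\neq 0$, the paper's dichotomy is simply whether $\ell\in\langle g\rangle$ for every nonzero $g$. If this fails for some $g$, one uniform grid gadget (Figure~\ref{FigApathsCTEX}(c)) kills the Erd\H{o}s--P\'osa property; if it holds, elementary group theory via Theorem~\ref{thm:group:hall} forces $\Gamma\cong\mbz/p^a\mbz$ with $\langle\ell\rangle$ the unique order-$p$ subgroup, hence $\ell\in 2\Gamma$ unless $\Gamma\cong\mbz/2\mbz$, and your reduction (i) sends the question to $\ell=0$, already characterized in Theorem~\ref{thm:zeroApathsUndir}. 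The $\ell=0$ counterexamples there likewise come from a single grid gadget (Figure~\ref{FigApathsCTEX}(b)) parametrized by any pair $g_1,g_2$ with $g_1$ of order greater than two in $\Gamma/\langle g_2\rangle$; such a pair exists in every finite abelian group not on the list. Thus your five-case split and the ``brand-new gadgets'' are unnecessary: all three of your rank-$2$ cases fall to these two one-parameter grids, and the lifting reduction (iv) is never used.
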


We also prove the following characterization for $\Gamma$-zero $A$-paths in \emph{directed} group-labelled graphs:

\begin{restatable}{theorem}{reszeroApathsDir}
\label{thm:zeroApathsDir}
Let $\Gamma$ be a group.
Then, in directed $\Gamma$-labelled graphs, $\Gamma$-zero $A$-paths satisfy the Erd\H{o}s-P\'osa property if and only if $\Gamma$ is finite.
\end{restatable}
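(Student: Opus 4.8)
The statement is an equivalence, so I would prove the two implications separately. For the forward direction I would show that if $\Gamma$ is infinite then, for every $n$, there is a directed $\Gamma$-labelled graph $H_n$ with $A\subseteq V(H_n)$ in which any two $\Gamma$-zero $A$-paths intersect --- so no two are disjoint --- yet every vertex set meeting all $\Gamma$-zero $A$-paths has at least $n$ vertices; such a family rules out any Erd\H{o}s-P\'osa function, since already for $k=2$ no bound $f(2)$ can exist. For the reverse (and substantive) direction, I would deduce the property for finite $\Gamma$ from Theorem~\ref{thm:nzApathDir}, Gallai's theorem on packing $A$-paths~\cite{Gal}, and a structural case split by tree-width.

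To construct $H_n$ I would use an interlocking family of $A$-paths, in the spirit of the counterexamples of Bruhn, Heinlein and Joos~\cite{BruHeiJoo} for composite moduli. Concretely, take a subdivision of $K_{2n}$ in which, for each vertex $i$, an $A$-path $P_i$ runs through the subdivision vertices $x_{ij}$ of the edges at $i$ --- so $P_i$ and $P_j$ meet exactly at $x_{ij}$ --- and has private degree-one ends in $A$. One then chooses the edge labels so that each $P_i$ has weight $0$ while every ``switching'' $A$-path, one that changes from one $P_i$-track to another at some $x_{ij}$, has nonzero weight; this uses a sufficiently independent set of group elements (for example powers of an element of infinite order, and a similar choice in the torsion case) and is possible precisely because $\Gamma$ is infinite. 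Then the $\Gamma$-zero $A$-paths are exactly $P_1,\dots,P_{2n}$: they pairwise intersect, but since each vertex lies on at most two of them, any transversal has at least $n$ vertices. I expect the only delicate point here to be verifying that no switching $A$-path has weight $0$, and this is exactly the step that fails for finite $\Gamma$, which is why the equivalence is sharp.

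For finite $\Gamma$, I would first reduce to the case where $G$ has at least $k$ disjoint $A$-paths, as otherwise Gallai's theorem gives a set of at most $2k-2$ vertices meeting every $A$-path, hence every $\Gamma$-zero $A$-path. Next apply Theorem~\ref{thm:nzApathDir}: if some $X$ with $|X|\le 2k-2$ meets every $\Gamma$-nonzero $A$-path, then every $A$-path of $G-X$ is $\Gamma$-zero, so Gallai's theorem applied to $G-X$ yields either $k$ disjoint ($\Gamma$-zero) $A$-paths or a set $Y$ with $|Y|\le 2k-2$ meeting every $A$-path of $G-X$, in which case $X\cup Y$ meets every $\Gamma$-zero $A$-path of $G$. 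This leaves the case that $G$ has $k$ disjoint $\Gamma$-nonzero $A$-paths and no $\Gamma$-nonzero transversal of size $2k-2$, which is the heart of the argument. Here I would split on tree-width: if $G$ has bounded tree-width, a dynamic programming argument over a tree decomposition decides the Erd\H{o}s-P\'osa question, the state space being finite exactly because $\Gamma$ is finite; if $G$ contains a large wall with an associated tangle, then after the standard normalization placing all edge labels in the subgroup $\Gamma_0$ generated by the cycle weights, one can route prescribed-weight $A$-paths through the wall and, using that $\Gamma_0$ is finite, correct their weights by local reroutings to obtain $k$ disjoint $A$-paths each of weight exactly $0$.

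I expect the main obstacle to be this last, large-wall case: one must produce \emph{many disjoint} $A$-paths of weight \emph{exactly} $0$, not merely of weight in a fixed coset of $\Gamma_0$, and it is the finiteness of $\Gamma$ --- which permits a pigeonhole or absorption argument over the weights of the many available reroutings --- that makes this possible. Everything else is either a direct reduction to Theorem~\ref{thm:nzApathDir} and Gallai's theorem, or, in the construction of $H_n$, careful but routine bookkeeping with the group labels.
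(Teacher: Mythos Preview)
Your treatment of the infinite case is different from the paper's but in the same spirit. The paper builds its counterexample on an $n\times n$ grid with pendant $A$-vertices on two opposite sides, labelling the pendant edges so that the only $A$-paths of weight $0$ run from $u_i$ to $w_{n+1-i}$ across the grid; these pairwise intersect, yet need $n$ vertices to cover. Your subdivided-$K_{2n}$ construction could work too, but the verification that no ``switching'' $A$-path has weight $0$ requires controlling many pairwise non-cancellation conditions among the labels, and ``powers of an element of infinite order, and a similar choice in the torsion case'' is not an argument, especially for infinite torsion groups. The paper's grid needs only a sequence $g_1,g_2,\dots$ with each $g_k$ avoiding a finite set of translates of the earlier $g_j$, which is immediate from infiniteness.

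For finite $\Gamma$ your plan has genuine gaps, and in any case misses a much shorter argument. First, ``dynamic programming over a tree decomposition decides the Erd\H{o}s--P\'osa question'' is not a proof: dynamic programming can compute the packing and covering numbers for a given instance, but it does not establish a \emph{functional relationship} between them independent of the input. You would still need a structural argument showing that, in bounded tree-width, the covering number is bounded in terms of the packing number. Second, your large-wall case --- correcting weights by local reroutings to get many disjoint $A$-paths of weight exactly $0$ --- is the substantive part and you only gesture at it; carrying this out would be roughly as much work as the paper's entire proof of Theorem~\ref{apathsmodptheorem}. (Your initial reduction via Theorem~\ref{thm:nzApathDir} is also largely idle: the complementary case ``$G$ has $k$ disjoint nonzero $A$-paths'' is never actually used, so you might as well go straight to the tree-width split.)

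The paper instead uses a short frame argument. First (Lemma~\ref{Atreelemma}), a pigeonhole on $|\Gamma|$ shows that any subcubic $A$-tree with more than $(2k-1)|\Gamma|$ leaves contains $k$ disjoint zero $A$-paths: from an internal vertex choose $|\Gamma|+1$ paths to leaves, two have equal weight, and their symmetric difference is a zero $A$-path; then induct. Second (Theorem~\ref{directedzeroApathstheorem}), take an inclusion-wise maximal forest $F$ whose components are subcubic $A$-trees each containing a zero $A$-path; maximality forces the set of degree-$1$ and degree-$3$ vertices of $F$ to hit every zero $A$-path, and the leaf bound gives $|X|<6(k-1)|\Gamma|$. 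No tree-width, no tangles, no walls --- and the bound is linear in $k|\Gamma|$. Even if your approach could be completed, it would yield a far worse bound via a far longer proof.
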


We remark that the ``if'' part of Theorem \ref{thm:zeroApathsDir} was proved independently by B\"oltz \cite{Bol}. 
We nevertheless provide the proof since it is short and Theorem \ref{thm:zeroApathsDir} is used in our proof of Theorem \ref{thm:Apathsfixedwtchar}. 
Besides, at the time of this writing, \cite{Bol} is only available in German. 

A known approach to proving Erd\H{o}s-P\'osa theorems involves \emph{tangles}, which point to a highly connected part of the graph in a consistent way.
We will see that there is a natural tangle associated to minimal counterexamples to Theorem \ref{apathsmodptheorem}, and we apply a structure theorem of \cite{ThoYooa} which roughly states that, given such a tangle, either there are many $\Gamma$-nonzero cycles distributed in one of three configurations, or there is a vertex set of bounded size intersecting every $\Gamma$-nonzero cycle in the highly connected part of the tangle.

In the first outcome we piece together the $\Gamma$-nonzero cycles to build many disjoint $\Gamma$-zero $A$-paths.
This is somewhat routine and similar to parts of \cite{BruUlm,HuyJooWol,ThoYooa}.
In the second outcome, however, we obtain a $\Gamma$-bipartite 3-block to which vertices of $A$ can attach in undesirable ways.
We deal with this by proving in this setting certain Menger-type results for $A$-$B$-paths of weight $\ell$, which do not hold in general.

The remainder of the paper is organized as follows.
In section \ref{sectionzeroApaths}, we prove Theorem \ref{thm:zeroApathsDir} and Theorem \ref{thm:Apathsfixedwtchar}, assuming Theorem \ref{apathsmodptheorem}.
In section \ref{sec:prelim}, we give the preliminaries required to state and apply the structure theorem of \cite{ThoYooa}.
In section \ref{sec:Apath0modp}, we prove Theorem \ref{apathsmodptheorem}.

\section{$A$-paths of a fixed weight in group-labelled graphs} \label{sectionzeroApaths}

We begin with some notation on paths. 
Let $G$ be a graph and let $A,B\subseteq V(G)$. An \emph{$A$-$B$-path} is a (possibly trivial) path with one endpoint in $A$, the other endpoint in $B$, and no internal vertex in $A\cup B$.
If $A=\{a\}$ or $B=\{b\}$ or both, we also write $a$-$B$-path or $A$-$b$-path or $a$-$b$-path respectively.
An \emph{$A$-$B$-$A$-path} is either an $A$-path containing a vertex in $B$, or a trivial path $\{a\}$ where $a \in A\cap B$.

It is an immediate corollary of Theorem \ref{thm:nzApathDir} that $A$-$B$-$A$-paths satisfy the Erd\H{o}s-P\'osa property.

\begin{cor}
\label{cor:ABApaths}
Let $G$ be a graph and let $A,B\subseteq V(G)$.
Then for all positive integers $k$, either $G$ contains $k$ disjoint $A$-$B$-$A$-paths or there is a set of at most $2k-2$ vertices intersecting every $A$-$B$-$A$-path.
\end{cor}
\begin{proof}
Since each vertex in $A\cap B$ forms a trivial $A$-$B$-$A$-path, we may assume without loss of generality that $A\cap B=\emptyset$.
Let $\Gamma$ be the free group generated by $|E(G)|$ elements.
Let $\vec{G}$ be an arbitrary orientation of $G$. Label each edge $e$ incident to $B$ with a distinct generator $\gamma(e)$ of $\Gamma$, and label all other edges 0.
Then $A$-$B$-$A$-paths in $G$ correspond exactly to $\Gamma$-nonzero $A$-paths in $(\vec G,\gamma)$.
\end{proof}

\ifx
Let us state another corollary of Theorem \ref{thm:nzApathDir}, that $A$-paths of odd length satisfy the Erd\H{o}s-P\'osa property. This can be easily seen by setting $\Gamma=\mbz/2\mbz$ and labelling every edge 1.
\begin{cor} \label{cor:oddApaths}
	Let $G$ be a graph with $A \subseteq V(G)$.
	Then for all positive integers $k$, either $G$ contains $k$ disjoint $A$-paths of odd length or there is a set of at most $2k-2$ vertices intersecting every odd $A$-path. 
\end{cor}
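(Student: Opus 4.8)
The plan is to deduce this directly from Theorem \ref{thm:nzApathDir} by a suitable choice of group and labelling. Take $\Gamma = \mbz/2\mbz$, fix an arbitrary orientation $\vec G$ of $G$, and let $\gamma : E(G) \to \Gamma$ assign the nonzero element $1$ to every edge. Since $1 = -1$ in $\mbz/2\mbz$, for every edge $e=uv$ we have $\gamma(e,u) = \gamma(e,v) = 1$, so the weight of a walk in this directed $\Gamma$-labelled graph equals its length modulo $2$, independently of the chosen orientation. In particular, an $A$-path is $\Gamma$-nonzero if and only if it has odd length.

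With this observation in hand, I would simply apply Theorem \ref{thm:nzApathDir} to $(\vec G, \gamma)$ and $A \subseteq V(G)$: for every positive integer $k$, either $(\vec G, \gamma)$ contains $k$ disjoint $\Gamma$-nonzero $A$-paths—that is, $k$ disjoint $A$-paths of odd length in $G$—or there is a set of at most $2k-2$ vertices meeting every $\Gamma$-nonzero $A$-path, i.e.\ every odd $A$-path in $G$. This is exactly the stated dichotomy.

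There is essentially no obstacle here; the only point requiring a line of justification is that the weight of a walk under the chosen labelling is orientation-independent and coincides with length modulo $2$, which is immediate from $1 = -1$ in $\mbz/2\mbz$. (Equivalently, one could invoke the remark that the directed and undirected group-labelled models agree when every nonzero group element has order two, and then phrase the argument in the undirected model.) I would keep the write-up to two or three sentences.
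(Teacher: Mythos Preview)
Your proposal is correct and matches the paper's own proof essentially line for line: choose $\Gamma=\mbz/2\mbz$, orient arbitrarily, label every edge by $1$, and invoke Theorem~\ref{thm:nzApathDir}. The only difference is that you spell out the orientation-independence via $1=-1$, which the paper leaves implicit.
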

\fi

If $P$ is a path and $x,y \in V(P)$, then $xPy$ denotes the subpath of $P$ from $x$ to $y$.
Given a sequence of paths $x_0P_1x_1, x_1P_2x_2,\dots, x_{m-1}P_mx_m$, their concatenation in this order is denoted by $x_0P_1x_1P_2x_2\dots x_{m-1}P_mx_m$. 
The last vertex $x_m$ may be omitted in this notation (e.g. $x_0P_1x_1P_2$) if $x_m$ is an endpoint of $P_m$ and the direction of traversal is obvious from context.

\subsection{$A$-paths of a fixed weight in infinite groups}
We first take care of the infinite case by showing that, if $\Gamma$ is infinite and $\ell \in \Gamma$ is an arbitrary element, then $A$-paths of weight $\ell$ do not satisfy the Erd\H{o}s-P\'osa property.
The construction also implies that the Erd\H{o}s-P\'osa functions for $\Gamma$-zero $A$-paths necessarily grow with the order of the group $\Gamma$.

\begin{lemma}\label{infinitectexlemma}
Let $\Gamma$ be an infinite group and let $\ell \in \Gamma$.
Then $A$-paths of weight $\ell$ do not satisfy the Erd\H{o}s-P\'osa property in both models of group-labelling.
\end{lemma}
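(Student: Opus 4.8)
The plan is to construct, for each positive integer $n$, an undirected (equivalently, directed) $\Gamma$-labelled graph $G_n$ with a prescribed set $A$ such that $G_n$ contains no two disjoint $A$-paths of weight $\ell$, but every hitting set for the $A$-paths of weight $\ell$ has size at least $n$; letting $n \to \infty$ then contradicts the Erd\H{o}s-P\'osa property. The natural template is the standard ``long wall'' or ``thorny path'' construction used for composite moduli in \cite{BruHeiJoo}, adapted to exploit that $\Gamma$ is infinite. First I would pick an element $g \in \Gamma$ of infinite order (which exists when $\Gamma$ is infinite and abelian; in the nonabelian directed case one may instead pick $g$ generating an infinite cyclic or otherwise unbounded subgroup). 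The key point is that the set $\{ig : i \in \mbz\}$ is infinite, so among any bounded collection of ``candidate weights'' built from bounded sums of $\pm g$, only finitely many can equal $\ell$, and we can force the weight-$\ell$ $A$-paths to all pass through a single shared vertex.

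Concretely, I would take $A = \{a_1, a_2\}$ and build a graph consisting of a long path (or a path of ``bubbles'') from $a_1$ to $a_2$ through vertices $v_1, \dots, v_N$, where $N$ is chosen large relative to $n$. Between consecutive $v_i$'s (and at the two ends) I would install parallel paths whose weights realize a controlled range of multiples of $g$, so that an $A$-path's total weight is $\ell$ only if it ``uses up'' its freedom in a way that pins it to a specific segment — forcing all weight-$\ell$ $A$-paths through one vertex, hence no two are disjoint. Meanwhile, by spacing out $N$ segments each of which independently admits a weight-$\ell$ $A$-path when the others are deleted, no single vertex set of size $< N$ can hit all of them. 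A cleaner variant: make $v_0 = a_1$, $v_{n} = a_2$, and for each $i$ put two internally disjoint $v_{i-1}$-$v_i$ paths of weights $0$ and $g$ respectively, with an extra gadget ensuring the ``target'' weight is some fixed $\ell' = jg$ achievable in exactly one way — so the unique weight-$\ell'$ $A$-path is forced, but small perturbations (additional parallel paths of weight $g$ attached at distinct vertices) create $n$ further pairwise-intersecting weight-$\ell'$ $A$-paths requiring $n$ vertices to cover. One then reduces the weight-$\ell$ problem to the weight-$\ell'$ problem by relabelling, using that for any target $\ell$ we can shift all edge labels along a fixed path by a constant to move $\ell'$ to $\ell$.

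Because the two models of group-labelling agree when every nonzero element has order two, and otherwise we have an element of order $\geq 3$ (or infinite order) to play with, I would handle the directed and undirected cases together by always working with an infinite-order element $g$ and only using the subgroup $\langle g \rangle \cong \mbz$; the construction then lives inside a $\mbz$-labelled graph and transfers verbatim to both models. The main obstacle I anticipate is the bookkeeping to guarantee simultaneously (i) that the weight-$\ell$ $A$-paths in $G_n$ are pairwise intersecting — this needs the infinite-order element so that the ``weight budget'' cannot be balanced in two independent places at once — and (ii) that the minimum hitting set grows unboundedly with $n$, which requires the $n$ forced weight-$\ell$ $A$-paths to be internally vertex-disjoint except at the one common vertex, arranged so that deleting any $n-1$ vertices leaves one intact. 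Verifying that no \emph{other}, unintended $A$-path accidentally has weight $\ell$ is the delicate part; the infiniteness of $\langle g \rangle$ is exactly what rules out such coincidences among the finitely many combinatorial ``types'' of $A$-path in $G_n$. Finally I would note that, since $N$ can be taken to grow with the group order in the finite case as well, the same construction shows the Erd\H{o}s-P\'osa function for $\Gamma$-zero $A$-paths must depend on $|\Gamma|$, as claimed in the surrounding text.
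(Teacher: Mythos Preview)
Your proposal has a genuine gap at its very first step: an infinite group need not contain any element of infinite order. Infinite torsion abelian groups such as $\bigoplus_{i\in\mbn}\mbz/2\mbz$ or the Pr\"ufer group $\mbz(p^\infty)$ are counterexamples, and for such $\Gamma$ your construction never gets off the ground. The paper avoids this entirely: it only uses that in an infinite group one can greedily pick a sequence $g_1,g_2,\dots$ with each $g_k$ outside the finite set $\{g_j,\ell-g_j,g_j\pm\ell,g_j\pm 2\ell:j<k\}$, which requires nothing more than $|\Gamma|=\infty$.

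Even where an infinite-order element exists, your $|A|=2$ bubble construction does not deliver what you need. In the chain $a_1=v_0,v_1,\dots,v_n=a_2$ with parallel $v_{i-1}$-$v_i$ paths of weights $0$ and $g$, every $A$-path passes through all of $v_1,\dots,v_{n-1}$, so deleting the single vertex $v_1$ already hits every $A$-path of any weight; the hitting set has size $1$, not $n$. Your subsequent ``extra gadget'' and ``small perturbations'' are too vague to repair this, and the tension you would have to resolve is real: you need the weight-$\ell$ $A$-paths to be pairwise intersecting yet require $n$ vertices to cover, which is exactly what an $n\times n$ grid with pendant $A$-vertices on opposite sides provides (paths from $u_i$ to $w_{n+1-i}$ must cross in the grid, but no $n-1$ vertices can separate all $n$ such pairs). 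That is the construction the paper uses, with the edges incident to $A$ labelled by the greedily chosen $g_i$'s so that only the ``crossing'' $u_i$-$w_{n+1-i}$ paths can have weight $\ell$.
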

\begin{proof}
Let $n$ be a positive integer and let $H_n$ denote the $n\times n$-grid with vertex set $\{v_{i,j}:i,j\in[n]\}$ and edge set $\{v_{i,j}v_{i',j'}:|i-i'|+|j-j'|=1\}$.
Let $G_n$ denote the graph obtained from $H_n$ by adding $2n$ new vertices $u_1,\dots,u_n,w_1,\dots,w_n$ and adding the edges $u_iv_{1,i}$ and $v_{n,i}w_i$ for each $i\in[n]$.

Since $\Gamma$ is infinite, we may choose a sequence of elements $g_1,g_2,\dots \in \Gamma$ such that $g_k \not\in \{g_j, \ell-g_j, g_j\pm\ell, g_j\pm 2\ell\}$ for all $j < k$.
For the directed model, orient the edges $u_iv_{1,i}$ from $u_i$ to $v_{1,i}$, orient the edges of the form $v_{n,i}w_i$ from $v_{n,i}$ to $w_i$, and orient the remaining edges arbitrarily to obtain $\vec G$.
Define the $\Gamma$-labelling
\begin{align*}
\gamma_n(e) = 
\left\{
	\begin{array}{ll}
		\ell - g_i &\text{ if $e = u_iv_{1,i}$ for $i\in[n]$} \\
		g_{n+1-i} &\text{ if $e=v_{n,i}w_i$ for $i\in [n]$} \\
		0 &\text{ otherwise}
	\end{array}
\right.
\end{align*}
and define $A = \{u_1,\dots,u_n,w_1,\dots,w_n\}$ (see Figure \ref{FigApathsCTEX} (a)).
In both $(G_n,\gamma_n)$ and $(\vec G_n,\gamma_n)$, it follows from our choice of $g_1,g_2,\dots$ that if an $A$-path has both endpoints in $\{u_i\}$, both endpoints in $\{w_i\}$, or endpoints $u_i$ and $w_{n+1-j}$ where $i\neq j$, then it cannot have weight $\ell$.
So every $A$-path of weight $\ell$ has endpoints $u_i$ and $w_{n+1-i}$ for some $i\in[n]$, and clearly no two such paths are disjoint.
On the other hand, no vertex set of size less than $n$ intersects all $\{u_i\}$-$\{v_i\}$-paths of weight $\ell$.
Therefore, $A$-paths of weight $\ell$ do not satisfy the Erd\H{o}s-P\'osa property.
\end{proof}

\begin{figure}[ht]
\centering
\begin{subfigure}[t]{0.31\textwidth}
\resizebox{\textwidth}{!}{\begin{tikzpicture}

\colorlet{hellgrau}{black!30!white}

\tikzstyle{smallvx}=[thick,circle,inner sep=0.cm, minimum size=1.5mm, fill=white, draw=black]
\tikzstyle{smallblackvx}=[thick,circle,inner sep=0.cm, minimum size=1.5mm, fill=black, draw=black]
\tikzstyle{squarevx}=[thick,rectangle,inner sep=0.cm, minimum size=2mm, fill=white, draw=black]
\tikzstyle{hedge}=[line width=1pt]
\tikzstyle{dedge}=[line width=1pt, ->]
\tikzstyle{markline}=[draw=hellgrau,line width=6pt]

\def\gridheight{5}
\def\brickheight{0.7}

\pgfmathtruncatemacro{\lastrow}{\gridheight}
\pgfmathtruncatemacro{\penultimaterow}{\gridheight-1}
\pgfmathtruncatemacro{\lastrowshift}{mod(\gridheight,2)}
\pgfmathtruncatemacro{\lastx}{2*\gridheight+1}

\foreach \i in {0,...,\gridheight}{
  \draw[hedge] (0,\i*\brickheight) -- (\gridheight*\brickheight, \i*\brickheight);
  \draw[hedge] (\i*\brickheight,0) -- (\i*\brickheight, \gridheight*\brickheight);
}
\foreach \i  in {1,...,6}{
  \path[hedge,<-] (\gridheight*\brickheight+1.5*\brickheight-0.08,\i*\brickheight-\brickheight) edge node[above] {$g_{\i}$} (\gridheight*\brickheight,\i*\brickheight-\brickheight);
  \path[dedge] (-1.5*\brickheight,\gridheight*\brickheight+\brickheight-\i*\brickheight) edge node[above] {$\ell - g_{\i}$} (-0.08,\gridheight*\brickheight+\brickheight-\i*\brickheight);
}

\foreach \i in {0,...,\gridheight}{
  \foreach \j in {0,...,\gridheight}{
    \node[smallvx] () at (\i*\brickheight,\j*\brickheight) {};
  }
}
\foreach \i in {0,...,\gridheight}{
  \node[smallblackvx] () at (-1.5*\brickheight,\i*\brickheight) {};
  \node[smallblackvx] () at (\gridheight*\brickheight+1.5*\brickheight, \i*\brickheight) {};
}


\end{tikzpicture}}
\caption{$(\vec G_6,\gamma_6)$}
\end{subfigure}
\hspace*{0.02\textwidth}
\begin{subfigure}[t]{0.31\textwidth}
\resizebox{\textwidth}{!}{\begin{tikzpicture}

\colorlet{hellgrau}{black!30!white}

\tikzstyle{smallvx}=[thick,circle,inner sep=0.cm, minimum size=1.5mm, fill=white, draw=black]
\tikzstyle{smallblackvx}=[thick,circle,inner sep=0.cm, minimum size=1.5mm, fill=black, draw=black]
\tikzstyle{squarevx}=[thick,rectangle,inner sep=0.cm, minimum size=2mm, fill=white, draw=black]
\tikzstyle{hedge}=[line width=1pt]
\tikzstyle{markline}=[draw=hellgrau,line width=6pt]

\def\gridheight{5}
\def\brickheight{0.7}

\pgfmathtruncatemacro{\lastrow}{\gridheight}
\pgfmathtruncatemacro{\penultimaterow}{\gridheight-1}
\pgfmathtruncatemacro{\lastrowshift}{mod(\gridheight,2)}
\pgfmathtruncatemacro{\lastx}{2*\gridheight+1}

\foreach \i in {0,...,\gridheight}{
  \draw[hedge] (0,\i*\brickheight) -- (\gridheight*\brickheight, \i*\brickheight);
  \draw[hedge] (\i*\brickheight,0) -- (\i*\brickheight, \gridheight*\brickheight);
}
\foreach \i in {0,...,\gridheight}{
  \draw[hedge] (-1.2*\brickheight,\i*\brickheight) edge node[above] {$g_1$} (0,\i*\brickheight);
  \draw[hedge] (\gridheight*\brickheight,\i*\brickheight) edge node[above] {{\footnotesize $-$}$g_1${\footnotesize $-$}$g_2$} (\gridheight*\brickheight+1.8*\brickheight,\i*\brickheight);
}
\foreach \i in {1,...,\gridheight}{
  \draw[hedge] (\i*\brickheight-\brickheight,\gridheight*\brickheight) edge node[above] {$g_2$} (\i*\brickheight,\gridheight*\brickheight);
}

\foreach \i in {0,...,\gridheight}{
  \foreach \j in {0,...,\gridheight}{
    \node[smallvx] () at (\i*\brickheight,\j*\brickheight) {};
  }
}
\foreach \i in {0,...,\gridheight}{
  \node[smallblackvx] () at (-1.2*\brickheight,\i*\brickheight) {};
  \node[smallblackvx] () at (\gridheight*\brickheight+1.8*\brickheight, \i*\brickheight) {};
}


\end{tikzpicture}}
\caption{$(G_6,\gamma_6')$.}
\end{subfigure}
\hspace*{0.02\textwidth}
\begin{subfigure}[t]{0.31\textwidth}
\resizebox{\textwidth}{!}{\begin{tikzpicture}

\colorlet{hellgrau}{black!30!white}

\tikzstyle{smallvx}=[thick,circle,inner sep=0.cm, minimum size=1.5mm, fill=white, draw=black]
\tikzstyle{smallblackvx}=[thick,circle,inner sep=0.cm, minimum size=1.5mm, fill=black, draw=black]
\tikzstyle{squarevx}=[thick,rectangle,inner sep=0.cm, minimum size=2mm, fill=white, draw=black]
\tikzstyle{hedge}=[line width=1pt]
\tikzstyle{markline}=[draw=hellgrau,line width=6pt]

\def\gridheight{5}
\def\brickheight{0.7}

\pgfmathtruncatemacro{\lastrow}{\gridheight}
\pgfmathtruncatemacro{\penultimaterow}{\gridheight-1}
\pgfmathtruncatemacro{\lastrowshift}{mod(\gridheight,2)}
\pgfmathtruncatemacro{\lastx}{2*\gridheight+1}

\foreach \i in {0,...,\gridheight}{
  \draw[hedge] (0,\i*\brickheight) -- (\gridheight*\brickheight, \i*\brickheight);
  \draw[hedge] (\i*\brickheight,0) -- (\i*\brickheight, \gridheight*\brickheight);
}
\foreach \i in {0,...,\gridheight}{
  \draw[hedge] (-1.5*\brickheight,\i*\brickheight) edge node[above] {$\ell-g$} (0,\i*\brickheight);
  \draw[hedge] (\gridheight*\brickheight,\i*\brickheight) edge node[above] {0} (\gridheight*\brickheight+1.5*\brickheight,\i*\brickheight);
}
\foreach \i in {1,...,\gridheight}{
  \draw[hedge] (\i*\brickheight-\brickheight,\gridheight*\brickheight) edge node[above] {$g$} (\i*\brickheight,\gridheight*\brickheight);
}

\foreach \i in {0,...,\gridheight}{
  \foreach \j in {0,...,\gridheight}{
    \node[smallvx] () at (\i*\brickheight,\j*\brickheight) {};
  }
}
\foreach \i in {0,...,\gridheight}{
  \node[smallblackvx] () at (-1.5*\brickheight,\i*\brickheight) {};
  \node[smallblackvx] () at (\gridheight*\brickheight+1.5*\brickheight, \i*\brickheight) {};
}


\end{tikzpicture}}
\caption{$(G_6,\gamma_6'')$.}
\end{subfigure}
\caption{The black vertices constitute $A$ and all unlabelled edges have weight 0.}
\label{FigApathsCTEX}
\end{figure}

\subsection{$\Gamma$-zero $A$-paths in directed $\Gamma$-labelled graphs}\label{sectionDirectedZeroApaths}

An \emph{$A$-tree} is a tree whose intersection with $A$ is exactly its set of leaves. 
Let $\ell(T)$ denote the number of leaves of a tree $T$.
Our proof of Theorem \ref{thm:zeroApathsDir} applies the so-called {\it frame} argument expounded in \cite{BruHeiJoo}.

\begin{lemma}\label{Atreelemma}
Let $\Gamma$ be a finite group and let $k$ be a positive integer.
If $(\vec{T},\gamma)$ is a directed $\Gamma$-labelled graph where $T$ is a subcubic $A$-tree with $\ell(T) \geq (2k-1)|\Gamma|+1$, then $(\vec T,\gamma)$ contains $k$ disjoint zero $A$-paths.
\end{lemma}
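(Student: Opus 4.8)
The plan is to convert ``zero $A$-path of $T$'' into a condition on a single potential function at the leaves, and then to solve a purely combinatorial packing problem in the subcubic tree. Fix any vertex $r$ of $T$ and, for each vertex $v$, let $g(v)\in\Gamma$ be the weight of the unique $r$--$v$ path in $T$. For two leaves $a,b$ of $T$, the $a$--$b$ path passes through the last common vertex $c$ of the $r$--$a$ and $r$--$b$ paths; writing the $r$--$a$ path as the $r$--$c$ path followed by the $c$--$a$ path (likewise for $b$), and using that reversing a walk negates its weight, a short computation shows that the $a$--$b$ path has weight $-g(a)+g(b)$ (the two copies of the $r$--$c$ contribution cancelling). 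Since $A$ is exactly the leaf set of $T$, the $A$-paths of $T$ are precisely the leaf-to-leaf paths, and such a path is a zero $A$-path if and only if $g(a)=g(b)$.

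The leaves of $T$ therefore fall into at most $|\Gamma|$ classes according to the value of $g$, so from $\ell(T)\ge(2k-1)|\Gamma|+1$ and the pigeonhole principle some class contains at least $\lceil((2k-1)|\Gamma|+1)/|\Gamma|\rceil=2k$ leaves. Fix a set $S$ of exactly $2k$ leaves of $T$ all having the same value of $g$. Then every path of $T$ joining two leaves of $S$ is a zero $A$-path, so it remains only to find $k$ pairwise vertex-disjoint paths of $T$, each joining two leaves of $S$.

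I would obtain this from the following claim, proved by induction on $|S|$: \emph{in a subcubic tree $T'$, every set $S$ of an even number of leaves can be partitioned into $|S|/2$ pairwise vertex-disjoint paths of $T'$.} Passing to the Steiner tree of $S$, one may assume every leaf of $T'$ lies in $S$. If $|S|=2$, the single $S$--$S$ path works. Otherwise $T'$ has $|S|\ge 4$ leaves, hence at least two vertices of degree $3$ (in a subcubic tree the number of degree-$3$ vertices equals the number of leaves minus $2$); rooting $T'$ at a leaf and taking a degree-$3$ vertex $v$ of maximum depth, $v$ has exactly two children, and since there is no branch vertex below $v$ the two subtrees below $v$ are paths, ending at leaves $a,b\in S$. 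Let $P$ be the $a$--$b$ path through $v$; it is internally disjoint from $S$. Because $T'$ is subcubic, the only edge of $T'$ with exactly one endpoint in $V(P)$ is the edge from $v$ to its parent, so $T'-V(P)$ is a connected subcubic tree; moreover $S\setminus\{a,b\}$ lies in $T'-V(P)$, and its members are still leaves there. Apply induction to $S\setminus\{a,b\}$ in $T'-V(P)$ and adjoin $P$. Applying the claim to our set $S$ finishes the lemma.

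The routine parts are the weight identity and the pigeonhole count. The crux is the combinatorial claim, and within it the verification that deleting the pendant $a$--$b$ path $P$ neither disconnects $T'$ nor strands any leaf of $S\setminus\{a,b\}$; this is precisely where the hypothesis ``subcubic'' enters, since $v$ has only one branch besides the two pendant ones, so removing $P$ keeps the remainder connected and leaf-preserving, whereas a vertex of degree $\ge 4$ (a star being the extreme case) could carry two independent pendant pairs whose removal separates the rest.
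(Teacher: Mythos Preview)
Your proof is correct and takes a genuinely different route from the paper's. The paper argues by induction on $k$: for $k=1$ it picks an internal vertex $v$, takes $|\Gamma|+1$ distinct $v$--$A$ paths, finds two of equal weight by pigeonhole, and takes their symmetric difference; for $k>1$ it locates a degree-$3$ vertex at which the tree splits into a subtree $T_2$ with between $|\Gamma|+1$ and $2|\Gamma|$ leaves (yielding one zero $A$-path by the base case) and a remainder $T_1$ with at least $(2(k-1)-1)|\Gamma|+1$ leaves (handled by induction). Your argument instead front-loads all the group theory into a single potential-function/pigeonhole step, obtaining $2k$ monochromatic leaves, and then proves an independent combinatorial lemma that any even set of leaves in a subcubic tree can be matched by vertex-disjoint paths. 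This cleanly decouples the group-theoretic content from the tree combinatorics; the matching lemma is reusable on its own, and your computation $-g(a)+g(b)$ goes through verbatim in the nonabelian case because the two $r$--$c$ contributions occur consecutively with opposite signs. The paper's approach, by contrast, avoids any auxiliary lemma but interleaves the two aspects throughout the induction. Both proofs use the subcubic hypothesis at the same point---to guarantee that peeling off a ``deepest cherry'' leaves a connected tree---and both land on the same leaf bound $(2k-1)|\Gamma|+1$.
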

\begin{proof}
We proceed by induction on $k$.
Let $k=1$.
Choose an internal vertex $v$ of $T$ and let $P_1,\dots,P_{|\Gamma|+1}$ be distinct $\{v\}$-$A$-paths in $T$. 
Then $\gamma(P_i) = \gamma(P_j)$ for some $i\neq j$, and the symmetric difference of $P_i$ and $P_j$ is a zero $A$-path.
This proves the base case.

Let $k>1$ and assume that the statement holds for all $k' < k$.
Fix a leaf $a$ of $T$.
For a vertex $v$ of degree 3, let $T_1'$ denote the connected component of $T-v$ containing $a$, and let $T_1$ denote the maximal $A$-tree contained in $T_1'$. 
Let $T_2 = T-T_1'$, which is also an $A$-tree, and note that $\ell(T_1)+\ell(T_2)=\ell(T)$.

Now choose $v$ to be a vertex of degree 3 that is farthest from $a$ subject to the condition that $\ell(T_2) \geq |\Gamma|+1$.
Then $\ell(T_2) \leq 2|\Gamma|$ by our choice of $v$, so $\ell(T_1) \geq 2((k-1)-1)|\Gamma|+1$.
By the inductive hypothesis, $(\vec T_1,\gamma)$ contains $k-1$ disjoint zero $A$-paths and $(\vec T_2,\gamma)$ contains a zero $A$-path, yielding $k$ disjoint zero $A$-paths in $(\vec T,\gamma)$.
\end{proof}

\begin{theorem}
\label{directedzeroApathstheorem}
Let $\Gamma$ be a finite group and let $k$ be a positive integer.
Then every directed $\Gamma$-labelled graph $(\vec G,\gamma)$ has either $k$ disjoint zero $A$-paths or a vertex set $X \subseteq V(G)$ with $|X| < 6(k-1)|\Gamma|$ such that $(\vec G-X,\gamma)$ has no zero $A$-path.
\end{theorem}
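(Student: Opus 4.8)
The plan is to carry out the ``frame argument'' of \cite{BruHeiJoo}, with Lemma \ref{Atreelemma} supplying control on the individual pieces. Suppose $(\vec G,\gamma)$ has no $k$ disjoint zero $A$-paths; I must produce the set $X$, and I may assume $(\vec G,\gamma)$ has at least one zero $A$-path, as otherwise $X=\emptyset$ works. Call a family $\{T_1,\dots,T_r\}$ of pairwise vertex-disjoint subcubic $A$-trees a \emph{frame} if each $T_i$ contains a zero $A$-path, and fix a frame maximizing $\sum_i\ell(T_i)$ (this maximum is attained, since a single zero $A$-path is already a frame and $\sum_i\ell(T_i)\le|A|$). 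Let $\mu(T_i)$ be the maximum number of pairwise disjoint zero $A$-paths in $(T_i,\gamma)$; since the $T_i$ are disjoint, $(\vec G,\gamma)$ has $\sum_i\mu(T_i)$ disjoint zero $A$-paths, so $\sum_i\mu(T_i)\le k-1$, and because each $\mu(T_i)\ge1$ also $r\le k-1$.

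Next I take $X$ to be the set of all vertices that are either a leaf or a degree-$3$ vertex of some $T_i$, and bound $|X|$. By Lemma \ref{Atreelemma}, $T_i$ cannot have $(2\mu(T_i)+1)|\Gamma|+1$ or more leaves, for that would yield $\mu(T_i)+1$ disjoint zero $A$-paths inside $T_i$; hence $\ell(T_i)\le 2|\Gamma|\mu(T_i)+|\Gamma|$. As each $T_i$ has at least two leaves (the ends of a zero $A$-path it contains) and a subcubic tree with $\lambda\ge2$ leaves has exactly $\lambda-2$ degree-$3$ vertices, $T_i$ contributes at most $2\ell(T_i)-2$ vertices to $X$, so $|X|\le\sum_i(2\ell(T_i)-2)\le 4|\Gamma|\sum_i\mu(T_i)+(2|\Gamma|-2)r\le(6|\Gamma|-2)(k-1)<6(k-1)|\Gamma|$.

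The crux is to show $X$ meets every zero $A$-path. Suppose a zero $A$-path $Q$ with ends $a,b\in A$ avoids $X$. Every $A$-vertex of a $T_i$ is a leaf of $T_i$, hence in $X$, so $a$ and $b$ lie in no $T_i$. If $Q$ is disjoint from all the $T_i$, then adding $Q$ as a new component gives a frame with strictly more leaves, a contradiction. Otherwise let $v$ be the first vertex of $Q$, starting from $a$, that lies in some $T_i$; the initial segment $P$ of $Q$ from $a$ to $v$ then meets $\bigcup_j V(T_j)$ only in $v$. Since $v\notin X$, the vertex $v$ has degree $2$ in $T_i$; in particular $v\notin A$, and the interior vertices of $P$ are interior vertices of $Q$ and hence also avoid $A$. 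Thus attaching $P$ to $T_i$ as a pendant branch at $v$ yields a subcubic $A$-tree $T_i^+\supseteq T_i$ with one extra leaf (namely $a$), still containing a zero $A$-path and still disjoint from the other trees, and replacing $T_i$ by $T_i^+$ contradicts maximality; so no such $Q$ exists.

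I expect the delicate points to be: (i) building into the definition of ``frame'' the requirement that each $T_i$ contain a zero $A$-path, which is what lets the components be charged against $k$; (ii) the verification in the last paragraph that $Q$, respectively $T_i^+$, is genuinely a subcubic $A$-tree --- precisely where I use that $v$ is neither an $A$-vertex nor already a degree-$3$ vertex of $T_i$, and that the interior of $P$ avoids $A$; and (iii) using the sharp estimate $\ell(T_i)\le 2|\Gamma|\mu(T_i)+|\Gamma|$ rather than the crude $\ell(T_i)\le(2k-1)|\Gamma|$, which is what keeps the constant at $6(k-1)|\Gamma|$ instead of something quadratic in $k$. The case $k=1$ is trivial.
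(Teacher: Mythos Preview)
Your proof is correct and follows essentially the same frame argument as the paper: choose a maximal family of disjoint subcubic $A$-trees each containing a zero $A$-path, take $X$ to be their leaves and degree-$3$ vertices, use Lemma~\ref{Atreelemma} to bound $|X|$, and use maximality to show $X$ hits every zero $A$-path. The only cosmetic differences are that the paper takes an inclusion-wise maximal forest rather than one maximizing $\sum_i\ell(T_i)$, and parameterizes the leaf bound via the largest $k_i$ with $\ell(T_i)\ge(2k_i-1)|\Gamma|+1$ rather than via your $\mu(T_i)$; both choices lead to the same estimate and the same hitting-set verification.
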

\begin{proof}
Let $F$ be an inclusion-wise maximal forest in $G$ such that each connected component of $F$ is a subcubic $A$-tree that contains a zero $A$-path.
Then we may assume that $F$ has at most $k-1$ connected components.
Let $X$ be the set of vertices of degree 1 or 3 in $F$.

Suppose $(\vec G-X,\gamma)$ contains a zero $A$-path $P$.
Then $P$ intersects $F-X$ since otherwise $F \cup P$ violates the maximality of $F$.
Let $P'$ be a subpath of $P$ such that $|V(P')\cap A| = 1 = |V(P')\cap V(F-X)|$.
Then the vertex in $V(P')\cap V(F-X)$ has degree 2 in $F$ by the definition of $X$, so $F \cup P'$ again violates the maximality of $F$.

Therefore, $(\vec G-X,\gamma)$ does not contain a zero $A$-path.
To show the upper bound on $|X|$, let $T_1,\dots,T_c$ denote the connected components of $F$ and let $k_i$ be the largest integer such that $\ell(T_i) \geq (2k_i-1)|\Gamma|+1$.
Then $\ell(T_i) \leq (2k_i+1)|\Gamma|$ and $T_i$ contains $k_i$ disjoint zero $A$-paths by Lemma \ref{Atreelemma}, so we may assume that $\sum_{i=1}^c k_i \leq k-1$.
Since a nontrivial subcubic tree $T$ has exactly $\ell(T)-2$ vertices of degree 3, we have 
\[|X| = \sum_{i=1}^c (2\ell(T_i)-2) \leq \sum_{i=1}^c (2(2k_i+1)|\Gamma|-2) < 4(k-1)|\Gamma| + 2c|\Gamma| \leq 6(k-1)|\Gamma|. \qedhere\]
\end{proof}
Theorem \ref{thm:zeroApathsDir} now follows immediately from Lemma \ref{infinitectexlemma} and Theorem \ref{directedzeroApathstheorem}.

\subsection{$\Gamma$-zero $A$-paths in undirected $\Gamma$-labelled graphs}

Here we give the undirected analog of Theorem \ref{thm:zeroApathsDir}, using Theorem \ref{apathsmodptheorem}.

\begin{restatable}{theorem}{reszeroApathsUndir}
\label{thm:zeroApathsUndir}
Let $\Gamma$ be an abelian group.
Then, in undirected $\Gamma$-labelled graphs, zero $A$-paths satisfy the Erd\H{o}s-P\'osa property if and only if $\Gamma \cong (\mbz/2\mbz)^k$ for some positive integer $k$ or $\Gamma \cong \mbz/m\mbz$ where $m$ is either equal to 4 or a prime.
\end{restatable}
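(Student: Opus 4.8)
The plan is to prove the two implications separately, using the known Erd\H{o}s-P\'osa theorems for $A$-paths of length $0$ modulo $4$ and modulo a prime (Theorems~\ref{thm:apathsmod4} and~\ref{apathsmodptheorem}), the directed-model result (Theorem~\ref{thm:zeroApathsDir}), Gallai's theorem~\cite{Gal}, and the non-examples of Lemma~\ref{infinitectexlemma} and of Proposition~8 of~\cite{BruHeiJoo}.

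For the ``if'' direction there are three cases. If $\Gamma \cong (\mbz/2\mbz)^k$, then every nonzero element of $\Gamma$ has order two, so the undirected and directed models coincide; since $\Gamma$ is finite, the property follows at once from Theorem~\ref{thm:zeroApathsDir} (quantitatively, from Theorem~\ref{directedzeroApathstheorem}). If $\Gamma \cong \mbz/m\mbz$ with $m=4$ or $m$ an odd prime, I would reduce to counting length modulo~$m$: given an undirected $\mbz/m\mbz$-labelled graph $(G,\gamma)$, replace each edge $e$ by an internally disjoint path of length $r+m$, where $r\in\{0,\dots,m-1\}$ represents $\gamma(e)$, obtaining a graph $G'$ in which an $A$-path has length $\equiv 0\pmod m$ exactly when the corresponding $A$-path of $G$ has weight $0$. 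This correspondence preserves disjoint packings, and any set meeting every $A$-path of length $0$ modulo~$m$ in $G'$ yields, after replacing each subdividing vertex by an endpoint of its edge, a set of no larger size meeting every zero $A$-path of $G$; hence the Erd\H{o}s-P\'osa bound of Theorem~\ref{thm:apathsmod4} (if $m=4$) or of Theorem~\ref{apathsmodptheorem} (if $m$ is an odd prime) transfers verbatim to $(G,\gamma)$. The case $\Gamma\cong\mbz/2\mbz$ falls under the first case.

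For the ``only if'' direction, if $\Gamma$ is infinite then Lemma~\ref{infinitectexlemma} gives the failure, so assume $\Gamma$ is finite and (discarding the trivial group, for which every $A$-path is zero and Gallai's theorem applies) nontrivial. The key tool is a transfer principle: if $H\leq\Gamma$ and a family of $H$-labelled graphs witnesses the failure of the Erd\H{o}s-P\'osa property for $H$-zero $A$-paths, then the same family with labels regarded as lying in $\Gamma$ witnesses the failure for $\Gamma$-zero $A$-paths, since the inclusion $H\hookrightarrow\Gamma$ is injective and hence preserves which $A$-paths have weight~$0$. It therefore suffices to produce a counterexample for each group on a short list of ``minimal'' bad groups. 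Using the primary decomposition of finite abelian groups, I would show that any finite abelian $\Gamma$ not isomorphic to $(\mbz/2\mbz)^k$, $\mbz/4\mbz$, or $\mbz/p\mbz$ ($p$ prime) contains a subgroup isomorphic to $\mbz/n\mbz$ for some composite $n>4$, to $(\mbz/p\mbz)^2$ for some odd prime $p$, or to $\mbz/4\mbz\oplus\mbz/2\mbz$: if two distinct primes divide $|\Gamma|$ then an element whose order is their product has composite order at least $6$, and otherwise $\Gamma$ is a $q$-group, where a routine inspection of the invariant factors yields either an element of composite order exceeding~$4$ or one of $(\mbz/q\mbz)^2$ and $\mbz/4\mbz\oplus\mbz/2\mbz$ as a subgroup. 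For the subgroup $\mbz/n\mbz$ with $n$ composite and $n>4$, the failure of the Erd\H{o}s-P\'osa property is Proposition~8 of~\cite{BruHeiJoo}. For the two remaining groups I would construct, for every positive integer $N$, a group-labelled graph $(G_N,\gamma_N)$ with a set $A_N$ in which only a bounded number of weight-$0$ $A$-paths are pairwise disjoint, yet every vertex set meeting all weight-$0$ $A$-paths has size at least $N$, generalizing the construction behind Proposition~8 of~\cite{BruHeiJoo} by using the two coordinates of the group in place of a nontrivial factorization.

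The main obstacle is this last step. Both $(\mbz/p\mbz)^2$ and $\mbz/4\mbz\oplus\mbz/2\mbz$ have the property that all of their proper subgroups and proper quotients satisfy the Erd\H{o}s-P\'osa property, so the transfer principle cannot be invoked there and genuinely new counterexamples have to be built; and, as in the hard part of the proof of Theorem~\ref{apathsmodptheorem}, the delicate point is to control the ways in which vertices of $A$ can attach so that the packing number stays bounded while the covering number grows without bound.
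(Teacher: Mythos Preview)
Your ``if'' direction and the subgroup case analysis for the ``only if'' direction are correct, and the transfer principle (failure of the Erd\H{o}s--P\'osa property lifts from subgroups to overgroups) is valid. The gap is exactly where you flag it: you have not supplied the counterexample for $(\mbz/p\mbz)^2$ with $p$ odd or for $\mbz/4\mbz\oplus\mbz/2\mbz$, and without these the ``only if'' direction is incomplete. But you have substantially overestimated the difficulty of this step. The analogy with Section~\ref{sec:bip3block} is misplaced: that section handles the \emph{positive} result for $\mbz/p\mbz$ and genuinely needs tangles, walls, and a delicate Menger-type argument, whereas what is needed here is an elementary grid construction of the same flavour as Proposition~8 of~\cite{BruHeiJoo} and Lemma~\ref{infinitectexlemma}.

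The paper in fact bypasses your case split with a single construction that works for every bad finite abelian $\Gamma$ at once. It first shows that any such $\Gamma$ contains nonzero elements $g_1,g_2$ with the coset $\langle g_2\rangle + g_1$ of order greater than two in $\Gamma/\langle g_2\rangle$; this is the same group-theoretic content as your trichotomy, repackaged. It then takes the $n\times n$ grid with two sides of pendant $A$-vertices (Figure~\ref{FigApathsCTEX}(b)): the pendant edges on the left get weight $g_1$, those on the right get $g_2-g_1$, the top horizontal edges of the grid get $g_2$, and everything else gets~$0$. The order condition on $g_1$ forces every zero $A$-path to run from left to right (otherwise its weight lies in $\langle g_2\rangle\pm 2g_1\neq\langle g_2\rangle$), and any left-to-right $A$-path avoiding the top row has weight $g_2\neq 0$; hence every zero $A$-path uses a top-row edge, so no two are disjoint while the covering number grows with~$n$. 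For your two ``minimal'' groups this specializes to $g_1=(1,0)$, $g_2=(0,1)$, which is exactly your intuition of ``using the two coordinates of the group in place of a nontrivial factorization''---so you were one concrete labelling away from finishing.
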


\begin{proof}
We may assume that $\Gamma$ is finite by Lemma \ref{infinitectexlemma}.
If $\Gamma \cong (\mbz/2\mbz)^k$ for some positive integer $k$, then every element of $\Gamma$ has order two, so the two models of $\Gamma$-labelled graphs are equivalent and $\Gamma$-zero $A$-paths satisfy the Erd\H{o}s-P\'osa property by Theorem \ref{directedzeroApathstheorem}.
If $\Gamma \cong \mbz/4\mbz$ or $\Gamma \cong \mbz/p\mbz$ for an odd prime $p$, then $\Gamma$-zero $A$-paths satisfy the Erd\H{o}s-P\'osa property by Theorem \ref{thm:apathsmod4} and Theorem \ref{apathsmodptheorem} respectively.

Now suppose $\Gamma$ is a finite abelian group not isomorphic to any of the above groups.
\begin{claim}\label{clm:g1g2}
There exist nonzero elements $g_1,g_2 \in \Gamma$ such that the order of $\langle g_2 \rangle + g_1$ in the quotient group $\Gamma/\langle g_2 \rangle$ is greater than two.
\end{claim}
\begin{subproof}
First suppose $|\Gamma|$ is not a power of 2. Then there is a prime $q_1 > 2$ dividing $|\Gamma|$. 
If $\Gamma$ is cyclic, then since $\Gamma\not\cong \mbz/q_1\mbz$, we may choose a generator $g_1$ of $\Gamma$ and let $g_2 = q_1g_1 \neq 0$.
If $\Gamma$ is not cyclic, we may choose $g_1,g_2$ such that $g_1$ has order $q_1$, $g_2$ has prime order, and the two subgroups $\langle g_1\rangle$ and $\langle g_2\rangle$ are distinct. It is easy to see that these choices of $g_1,g_2$ satisfy the claim.

Now suppose $|\Gamma|$ is a power of $2$.
If $\Gamma$ is cyclic, then it has an element $g_1$ of order 8 (since $\Gamma \not\cong \mbz/2\mbz, \mbz/4\mbz$) and we may choose $g_2=4g_1$. 
If $\Gamma$ is not cyclic, then since $\Gamma \not\cong (\mbz/2\mbz)^k$, it contains a subgroup $H$ isomorphic to $(\mbz/4\mbz)\times(\mbz/2\mbz)$ and we may choose $g_1$ and $g_2$ so that $H=\langle g_1,g_2\rangle$ and $g_1$ has order 4. 
These choices again satisfy the claim.
\end{subproof}

Let $q_2$ be the order of $g_2$ and let $q_1>2$ be the order of $\langle g_2 \rangle + g_1$ in $\Gamma/\langle g_2 \rangle$.
Let $G_n$ be the graph obtained from the $n\times n$-grid as in Lemma \ref{infinitectexlemma}.
Define the $\Gamma$-labelling
\begin{align*}
\gamma_n'(e) = 
\left\{
	\begin{array}{ll}
		g_1 &\text{ if $e$ is incident to $u_i$ for some $i\in[n]$} \\
		-g_1-g_2 &\text{ if $e$ is incident to $w_i$ for some $i \in [n]$}\\
		g_2 &\text{ if $e=v_{1,i}v_{1,i+1}$ for some $i\in [n-1]$} \\
		0 &\text{ otherwise}
	\end{array}
\right.
\end{align*}
and let $A = \{u_1,\dots,u_n,w_1,\dots,w_n\}$ (see Figure \ref{FigApathsCTEX} (b)).
Then every $\Gamma$-zero $A$-path in $(G_n,\gamma'_n)$ has one endpoint in $\{u_i:i\in[n]\}$ and one endpoint in $\{v_i:i\in[n]\}$, since every other $A$-path $P$ has its weight in the coset $\langle g_2\rangle +2g_1$ or $\langle g_2 \rangle-2g_1$, neither of which are zero in $\Gamma/\langle g_2\rangle$ by Claim \ref{clm:g1g2}.
Moreover, such a path contains an edge of the form $v_{1,i}v_{1,i+1}$ for some $i\in[n-1]$ since otherwise its weight would be equal to $-g_2 \neq 0$.

Clearly, there does not exist two disjoint zero $A$-paths, and the smallest size of a vertex set intersecting every zero $A$-path can be made arbitrarily large for with large enough $n$.
Therefore, zero $A$-paths in $\Gamma$-labelled graphs do not satisfy the Erd\H{o}s-P\'osa property.
\end{proof}

\subsection{$A$-paths of a fixed weight in undirected group-labelled graphs}

Let $p$ be a prime.
A \emph{$p$-group} is a group in which the order of every element is a power of $p$.
We will use the following well-known fact about $p$-groups.  
\begin{theorem} [Theorem 12.5.2 in \cite{HallGroups}]
\label{thm:group:hall}
A finite $p$-group which contains only one subgroup of order $p$ is either cyclic or a generalized quaternion group.
\end{theorem}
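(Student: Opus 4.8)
The statement is that any finite $p$-group $G$ possessing a unique subgroup of order $p$ must be cyclic, or else $p = 2$ and $G$ is generalized quaternion. I would prove this by induction on $|G|$. The cases $|G| \le p^2$ are immediate, since the only non-cyclic group of order $p^2$ is $(\mathbb{Z}/p\mathbb{Z})^2$, which has $p+1 > 1$ subgroups of order $p$; so assume $|G| \ge p^3$. The whole argument will be organized around the goal of producing a cyclic subgroup of index $p$ in $G$: once such a subgroup is found, one invokes the classical classification of $p$-groups with a cyclic maximal subgroup --- namely $\mathbb{Z}/p^{n}\mathbb{Z}$, $\mathbb{Z}/p^{n-1}\mathbb{Z}\times\mathbb{Z}/p\mathbb{Z}$, the modular group $M_{p^n}$, and, for $p = 2$ only, the dihedral, semidihedral, and generalized quaternion groups $D_{2^n}$, $SD_{2^n}$, $Q_{2^n}$ --- and one checks directly that among these the only ones with a single subgroup of order $p$ are $\mathbb{Z}/p^{n}\mathbb{Z}$ and $Q_{2^n}$. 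That is exactly the desired conclusion.

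To find a cyclic subgroup of index $p$, pick any maximal subgroup $M$ of $G$. Every subgroup of order $p$ in $M$ is a subgroup of order $p$ in $G$, so $M$ still has a unique subgroup of order $p$, and by induction $M$ is cyclic or a generalized quaternion $2$-group. If $M$ is cyclic we are done. If $p$ is odd this is the only possibility, since generalized quaternion groups are $2$-groups; hence the odd case is complete. (As a byproduct, $Z(G)$ --- an abelian $p$-group with a unique subgroup of order $p$ --- is cyclic, and the unique subgroup $Z$ of order $p$, being contained in the nontrivial center, lies in $Z(G)$; these observations are convenient but not essential.)

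So suppose $p = 2$. If some maximal subgroup of $G$ is cyclic we are done by the previous paragraph, and the one remaining case --- every maximal subgroup of $G$ is generalized quaternion --- is the main obstacle. To rule it out I would argue as follows. The groups of order at most $16$ are checked by hand, so take a maximal subgroup $M \cong Q_{2^{m}}$ with $m \ge 4$; then $M$ has a \emph{characteristic} cyclic subgroup $C$ of index $2$ (it is generated by any element of maximal order, and this is characteristic since for $m \ge 4$ no element of $Q_{2^m}$ lying outside the index-$2$ cyclic subgroup has order exceeding $4$), so $C \trianglelefteq G$ and $|G : C| = 4$. Now study the conjugation action $G \to \operatorname{Aut}(C)$: since $C$ is a cyclic $2$-group of order at least $8$, $\operatorname{Aut}(C)$ is the direct product of a group of order $2$ with a cyclic group, so this action together with $|G/C| = 4$ pins $G$ down to a short list of extensions of $C$; computing the involutions of each of these, and using that $G$ has exactly one, produces in every case either a cyclic maximal subgroup of $G$ or a second involution --- the contradiction sought. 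Equivalently, this whole step may be replaced by citing the classical fact that a non-cyclic, non-generalized-quaternion $2$-group has more than one involution, but establishing that fact amounts to essentially the same analysis. Thus the only genuinely delicate point is this last $2$-group case; everything else is bookkeeping with the classification of $p$-groups having a cyclic subgroup of index $p$.
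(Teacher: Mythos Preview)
The paper does not prove this statement at all: it is quoted verbatim from Hall's textbook and marked with \verb|\qed|, serving purely as a black box in the proof of Theorem~\ref{thm:Apathsfixedwtchar}. There is therefore no ``paper's own proof'' to compare against.

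Your sketch follows one of the standard routes to this classical result and is essentially correct. The reduction via induction to the case of a cyclic maximal subgroup, followed by invoking the classification of $p$-groups with such a subgroup, is clean and handles the odd-$p$ case completely. Your identification of the one genuinely delicate point---the $2$-group case where every maximal subgroup is generalized quaternion---is accurate, and your plan (the characteristic cyclic subgroup $C$ of index $4$, normal in $G$, together with the structure of $\operatorname{Aut}(C)$) is a workable way to finish, though you leave the extension analysis as an exercise rather than carrying it out. One small caution: your parenthetical remark that the step ``may be replaced by citing the classical fact that a non-cyclic, non-generalized-quaternion $2$-group has more than one involution'' is circular, since that fact is exactly the theorem being proved; you do note this, but it would be better simply to omit the remark. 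Since the paper only needs the abelian case (where the conclusion is immediate from the structure theorem for finite abelian groups), all of this is more than what is actually required here.
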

Note that generalized quaternion groups are nonabelian.
We now prove Theorem \ref{thm:Apathsfixedwtchar}, restated here for the reader's convenience.

\resApathsfixedwtchar*

\begin{proof}
We may assume that $\Gamma$ is finite by Lemma \ref{infinitectexlemma}.
If $\ell = 0$, we can apply Theorem \ref{thm:zeroApathsUndir}, so assume $\ell \neq 0$.

Suppose there exists a nonzero element $g \in \Gamma$ such that $\ell \not\in \langle g\rangle$.
Let $G_n$ be the graph obtained from the $n\times n$-grid as before. Define the $\Gamma$-labelling
\begin{align*}
\gamma_n''(e) = 
\left\{
	\begin{array}{ll}
		\ell - g &\text{ if $e$ is incident to $u_i$ for some $i\in[n]$} \\
		g &\text{ if $e=v_{1,i}v_{1,i+1}$ for some $i\in [n-1]$} \\
		0 &\text{ otherwise}
	\end{array}
\right.
\end{align*}
and let $A=\{u_1,\dots,u_n,w_1,\dots,w_n\}$ (see Figure \ref{FigApathsCTEX} (c)).
It follows from our choice of $g$ that every $A$-path of weight $\ell$ has one endpoint in each $\{u_i:i\in[n]\}$ and $\{w_i:i\in[n]\}$ and uses edge of the form $v_{1,i}v_{1,i+1}$.
Therefore, $A$-paths of weight $\ell$ do not satisfy the Erd\H{o}s-P\'osa property.

So we may assume that $\ell \in \langle g \rangle$ for all nonzero $g \in \Gamma$.
This implies that the order of $\ell$ in $\Gamma$ is prime; if its order is equal to $mn$ where $m,n>1$ are integers, then $\ell \not \in \langle m\ell \rangle$.
Let $p$ denote the order of $\ell$ in $\Gamma$.
Then $\Gamma$ is a $p$-group; if there is a distinct prime $q$ dividing $|\Gamma|$, then for every element $g \in \Gamma$ of order $q$ we have $\ell \not\in \langle g\rangle$.
Similarly, if $\Gamma'$ is a subgroup of $\Gamma$ with $|\Gamma'|=p$ and $\Gamma' \neq \langle \ell \rangle$, then for any nonzero $g \in \Gamma'$ we have $\ell \not\in\langle g\rangle$.

Thus, $\Gamma$ is a $p$-group and $\langle \ell \rangle$ is the unique subgroup of order $p$.
By Theorem \ref{thm:group:hall} (and since $\Gamma$ is abelian), $\Gamma \cong \mbz/p^a\mbz$ for some $a \in \mbn$.
First, if $\Gamma \cong \mbz/2\mbz$ and $\ell = 1$, then $A$-paths of weight $\ell$ are exactly the nonzero $A$-paths, hence they satisfy the Erd\H{o}s-P\'osa property by Theorem \ref{thm:nzApathDir} (since the directed and undirected models of group-labellings are equivalent for $\Gamma\cong \mbz/2\mbz$).
Otherwise, we have $a \geq 2$ in which case $\ell \in \langle p^{a-1}\rangle-\{0\}$, or $p>2$ (or both). 
In all cases (except $\Gamma\cong\mbz/2\mbz$), there exists $g \in \Gamma$ such that $\ell = 2g$.

Now we claim that the Erd\H{o}s-P\'osa property holds for $A$-paths of weight $\ell=2g$ if and only if it holds for $\Gamma$-zero $A$-paths.
Indeed, given a $\Gamma$-labelled graph $(G,\gamma)$ with $A \subseteq V(G)$, consider the $\Gamma$-labelling $\gamma':E(G) \to \Gamma$ which labels an edge $e\in E(G)$ with $\gamma'(e) = \gamma(e)-k_eg$ where $k_e \in \{0,1,2\}$ is the number of vertices in $A$ incident with $e$.
Then $A$-paths of weight $\ell$ in $(G,\gamma)$ correspond exactly to the $\Gamma$-zero $A$-paths in $(G,\gamma')$.

It then follows from Theorem \ref{thm:zeroApathsUndir} that, if $\Gamma \not\cong \mbz/2\mbz$ and $\ell\neq 0$, then $A$-paths of weight $\ell$ satisfy the Erd\H{o}s-P\'osa property if and only if $\Gamma \cong \mbz/4\mbz$ and $\ell = 2$, or $\Gamma \cong \mbz/p\mbz$ for odd prime $p$ and $\ell \in \Gamma-\{0\}$.
This completes the proof of Theorem \ref{thm:Apathsfixedwtchar}.
\end{proof}

We pose the following problem of determining the directed analog of Theorem \ref{thm:Apathsfixedwtchar}.
Note that in directed group-labelled graphs, reversing the direction of traversal of a walk inverts its weight. 
\begin{problem}
Characterize the groups $\Gamma$ and elements $\ell \in \Gamma$ such that, in \emph{directed} $\Gamma$-labelled graphs, $A$-paths of weight in $\{\ell,-\ell\}$ satisfy the Erd\H{o}s-P\'osa property.
\end{problem}
It suffices to consider finite groups by Lemma \ref{infinitectexlemma} and nonzero $\ell$ by Theorem \ref{thm:zeroApathsDir}.
If $\Gamma\cong \mbz/3\mbz$ and $\ell \neq 0$, then the problem is equivalent to nonzero $A$-paths since $\Gamma = \{0,1,-1\}$.
The counterexample in Figure \ref{FigApathsCTEX} (c) can also be adapted to the directed setting in the natural way to show that the Erd\H{o}s-P\'osa property does not hold if there exists a nonzero element $g \in \Gamma$ such that $\ell \not\in \langle g\rangle$.
It therefore suffices to consider the two outcomes of Theorem \ref{thm:group:hall}.
However, the reduction to $\Gamma$-zero $A$-paths in the case $\ell \in 2\Gamma$ does not apply to the directed model, and the case of generalized quaternion groups (where $\ell$ is the unique element of order two) will also have to be dealt with.

\section{Preliminaries}
\label{sec:prelim}

In this section we provide several definitions and tools required to state the aforementioned structure theorem of \cite{ThoYooa} that is employed to prove Theorem \ref{apathsmodptheorem}.
For the sake of proving Theorem \ref{apathsmodptheorem}, we only need these tools for the special case $\Gamma \cong \mbz/p\mbz$ for an odd prime $p$, but we nevertheless present them for general abelian groups $\Gamma$ since our techniques for the $\Gamma$-bipartite 3-block case (section \ref{sec:bip3block}) are general and may be of independent interest.
All group-labelled graphs are assumed to be undirected for the remainder of the paper.

A \emph{3-block} $B$ of a graph $G$ is a maximal set of at least 3 vertices such that there does not exist a set of at most two vertices disconnecting two vertices in $B$ (see \cite{CarDieHamHun}).
Note that $G[B]$ may not have any edges (for example, take a large clique and subdivide every edge once).
Let $H$ be a connected component of $G-B$ and let $X$ be the set of (at most two) vertices of $B$ adjacent to a vertex in $H$.
A \emph{$B$-bridge} $\mcb \subseteq G$ is either such a component $H$ together with $X$ and the edges of $G$ with one endpoint in $X$ and the other in $H$, or an edge in $G$ with both endpoints in $B$.
The \emph{attachments} of a $B$-bridge $\mcb$ are the vertices in $\mcb \cap B$.

A (\emph{$\Gamma$-labelled}) \emph{3-block} $(B,\gamma)$ of a $\Gamma$-labelled graph $(G,\gamma)$ is the $\Gamma$-labelled graph obtained from a 3-block $B'$ of $G$ with $|B'|\geq 4$ as follows:
For each $u,v \in B'$ and $\alpha \in \Gamma$, if there is a $B'$-path in $(G,\gamma)$ with endpoints $u,v$ and weight $\alpha$, then add a new (possibly parallel) edge $uv$ with label $\alpha$.
Observe that since $|B'|\geq 4$, the graph $B$ is 3-connected.
Also note that for each path or simple cycle in $(B,\gamma)$, there is a corresponding path or cycle respectively in $(G,\gamma)$ with the same weight and same sequence of vertices in $B$.

Let $\Gamma$ be an abelian group and let $g\in\Gamma$ with $2g=0$.
Given a $\Gamma$-labelled graph $(G,\gamma)$ and a vertex $v \in V(G)$, we can define a new $\Gamma$-labelling $\gamma'$ of $G$ where
\begin{align*} 
\gamma'(e) = \left\{
	\begin{array}{ll}
	\gamma(e)+g & \text{if $e$ is incident with $v$} \\
	\gamma(e) & \text{if $e$ is not incident with $v$}
	\end{array}
\right.
\end{align*}
We call this a \emph{shifting} operation at $v$.
Since $2g=0$, this preserves the weights of cycles and of paths which do not contain $v$ as an endpoint.
Let $\bm 0$ denote the $\Gamma$-labelling defined by $\bm 0(e)=0$ for all $e \in E(G)$.
A $\Gamma$-labelled graph $(G,\gamma)$ is \emph{$\Gamma$-bipartite} if every cycle in $G$ is $\Gamma$-zero.
\begin{lemma}[Lemma 2.3 in \cite{ThoYooa}]\label{lemma3connshiftequivalent}
Let $\Gamma$ be an abelian group and let $(G,\gamma)$ be a $\Gamma$-labelled graph such that $G$ is 3-connected and $(G,\gamma)$ is $\Gamma$-bipartite.
Then $(G,\bm 0)$ can be obtained from $(G,\gamma)$ by a sequence of shifting operations.
\end{lemma}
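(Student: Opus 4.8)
The plan is to first determine which group elements can occur as edge labels, and then recognize the labelling as a coboundary that a suitable sequence of shifting operations annihilates.

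\emph{Step 1: every edge label lies in $\Gamma_2 := \{x \in \Gamma : 2x = 0\}$.} Fix an edge $e = uv$. Since $G$ is $3$-connected, Menger's theorem provides three internally disjoint $u$--$v$ paths, at most one of which is the single edge $e$; hence there are two internally disjoint $u$--$v$ paths $P_1, P_2$ avoiding $e$. Then $e \cup P_1$, $e \cup P_2$, and $P_1 \cup P_2$ are cycles, so $\Gamma$-bipartiteness gives $\gamma(e) + \gamma(P_1) = \gamma(e) + \gamma(P_2) = \gamma(P_1) + \gamma(P_2) = 0$. The first two equations give $\gamma(P_1) = \gamma(P_2) = -\gamma(e)$, and substituting into the third yields $2\gamma(e) = 0$. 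Thus $\gamma$ takes values in $\Gamma_2$, which is a vector space over $\mathbb{F}_2$.

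\emph{Step 2: $\gamma$ is a coboundary.} Regard $\gamma$ as a $\Gamma_2$-valued function on $E(G)$. For every cycle $C$ we have $\sum_{e \in C}\gamma(e) = 0$, and since the weight of the symmetric difference of two cycles equals the sum of their weights in the $\mathbb{F}_2$-vector space $\Gamma_2$ (the overlap contributes a multiple of $2$), the function $\gamma$ vanishes on the entire cycle space of $G$ over $\mathbb{F}_2$. Working coordinatewise with respect to a basis of the finite-dimensional subspace of $\Gamma_2$ spanned by the edge labels, and using that over $\mathbb{F}_2$ the cut space is the orthogonal complement of the cycle space and is spanned by the vertex stars $\delta(v)$, we obtain a function $\phi : V(G) \to \Gamma_2$ with $\gamma(uv) = \phi(u) + \phi(v)$ for every edge $uv$. (Alternatively $\phi$ can be built directly: fix a spanning tree rooted at $r$, set $\phi(r) = 0$, and let $\phi(v)$ be the weight of the tree path from $r$ to $v$; well-definedness and the edge relation follow from Step 1 together with $\Gamma$-bipartiteness via the usual symmetric-difference argument.)

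\emph{Step 3: shift.} Each $\phi(v)$ has order dividing $2$, so for each $v \in V(G)$ the shifting operation at $v$ with element $\phi(v)$ is admissible. Performing all of these in any order replaces the label of each edge $uv$ by $\gamma(uv) + \phi(u) + \phi(v) = 2\gamma(uv) = 0$, so this sequence of shifting operations transforms $(G,\gamma)$ into $(G,\bm 0)$. The main obstacle is Step~1: this is exactly where $3$-connectivity is essential, and the statement genuinely fails without it --- e.g.\ the $4$-cycle with $\mathbb{Z}/3\mathbb{Z}$-labels $1,2,1,2$ is $\Gamma$-bipartite but cannot be shifted to $\bm 0$, since $\Gamma_2 = \{0\}$ there. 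Once Step~1 is in place, Steps~2 and~3 are routine linear algebra over $\mathbb{F}_2$.
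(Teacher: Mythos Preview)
The paper does not prove this lemma; it is quoted verbatim from \cite{ThoYooa} and marked with a \qed, so there is no in-paper argument to compare against. That said, your proof is correct and self-contained. Step~1 is the key point and is fine: since $G$ is $3$-connected, $G-e$ is $2$-connected, which supplies the two internally disjoint $u$--$v$ paths $P_1,P_2$ avoiding $e$; the three cycle equations then force $2\gamma(e)=0$. Step~2 is cleanest via the spanning-tree construction you sketch parenthetically (the cut-space/orthogonal-complement phrasing is also valid but requires a small coordinatewise reduction you only gesture at), and Step~3 is immediate. Your closing example showing that $3$-connectivity cannot be dropped is a nice touch.
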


In particular, if $(B,\gamma)$ is a $\Gamma$-bipartite 3-block of $(G,\gamma)$, then there is a sequence of shifting operations in $(G,\gamma)$ after which every $V(B)$-path in $G$ has weight 0.

A $\Gamma$-nonzero cycle can be used to find a nonzero $A$-path, provided that there are three disjoint paths from $A$ to $V(C)$:
\begin{lemma}[Lemma 2.4 in \cite{ThoYooa}]
\label{threepathscyclelemma}
Let $\Gamma$ be an abelian group, let $(G,\gamma)$ be a $\Gamma$-labelled graph, and let $C$ be a nonzero cycle in $G$.
If $A \subseteq V(G)$ and there exist three disjoint $A$-$V(C)$-paths, then the union of these paths and $C$ contains a nonzero $A$-path.
\end{lemma}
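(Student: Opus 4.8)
I plan to prove the lemma by working inside the fixed subgraph $H := P_1\cup P_2\cup P_3\cup C$, exhibiting a short explicit list of $A$-paths of $H$, and showing they cannot all be $\Gamma$-zero. Write $P_1,P_2,P_3$ for the three given disjoint $A$-$V(C)$-paths, with endpoints $a_i\in A$ and $c_i\in V(C)$, and set $p_i:=\gamma(P_i)$; since the $P_i$ are disjoint, $c_1,c_2,c_3$ are distinct. The engine of the argument is the trivial identity $2\gamma(C)\neq 3\gamma(C)$, which holds because $\gamma(C)\neq 0$.

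The main case is $A\cap V(C)=\emptyset$. Here $H$ is exactly the cycle $C$ with the three pendant paths $P_1,P_2,P_3$ attached at the distinct vertices $c_1,c_2,c_3$, and $A\cap V(H)=\{a_1,a_2,a_3\}$ (the $P_i$ have no internal vertex in $A$, and $c_i\notin A$). Hence every $A$-path of $H$ joins two of the $a_i$, and for each pair $\{j,k\}$ there are exactly two such paths, one along each of the two arcs of $C$ between $c_j$ and $c_k$; this gives six $A$-paths in total. For $\{i,j,k\}=\{1,2,3\}$ let $x_i$ be the weight of the arc of $C$ from $c_j$ to $c_k$ avoiding $c_i$, so $x_1+x_2+x_3=\gamma(C)$; the two $a_j$-$a_k$-paths have weights $p_j+p_k+x_i$ and $p_j+p_k+(\gamma(C)-x_i)$. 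If all six $A$-paths were zero, then comparing these two for each pair gives $2x_i=\gamma(C)$ for $i=1,2,3$, and summing yields $2\gamma(C)=2(x_1+x_2+x_3)=3\gamma(C)$, so $\gamma(C)=0$, a contradiction. Thus one of the six $A$-paths is nonzero.

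It then remains to handle vertices of $A$ lying on $C$. If $|A\cap V(C)|\geq 2$, the vertices of $A\cap V(C)$ split $C$ into arcs whose weights sum to $\gamma(C)\neq 0$, so one of these arcs is already a nonzero $A$-path contained in $C\subseteq H$. If $|A\cap V(C)|=\{b\}$ is a single vertex, then $b$ meets at most one of the $P_i$; discarding that one, I would work inside $P_j\cup P_k\cup C$, where $b,c_j,c_k$ are three distinct vertices of $C$ and $a_j,a_k,b$ are the only vertices of $A$ present. There are exactly five $A$-paths here (the single $a_j$-$a_k$-path avoiding $b$, together with two $a_j$-$b$-paths and two $a_k$-$b$-paths, using the two arcs of $C$ in each case), and the same kind of elementary computation shows that if all five had weight $0$ then $\gamma(C)=2\gamma(C)$, again contradicting $\gamma(C)\neq 0$.

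I do not expect a genuine obstacle: the argument is short, and the only care needed is (i) verifying in each case that the listed paths really are \emph{all} the $A$-paths of the relevant subgraph of $H$, so that ``all of them are zero'' is a legitimate hypothesis, and (ii) the routine group arithmetic in the $|A\cap V(C)|=1$ subcase, which is just a slightly longer incarnation of $2\gamma(C)\neq 3\gamma(C)$. If one prefers, the main case can be phrased without contradiction: if $2x_i\neq\gamma(C)$ for some $i$ then the two $a_j$-$a_k$-paths already have distinct weights and one is nonzero; otherwise $2x_i=\gamma(C)$ for all $i$ forces $\gamma(C)=0$, which is impossible.
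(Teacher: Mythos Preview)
The paper does not actually prove this lemma; it is quoted from \cite{ThoYooa} and marked with \qed, so there is no in-paper argument to compare against. Your proposed proof is correct and self-contained.

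A few confirmations on the points you flagged as needing care. In the main case $A\cap V(C)=\emptyset$, your enumeration of exactly six $A$-paths in $H$ is right: since each $P_i$ meets $C$ only at $c_i$ and has $a_i$ as its unique vertex in $A$, any $A$-path in $H$ must start at some $a_j$, traverse $P_j$ to $c_j$, follow one of the two arcs of $C$ to some $c_k$, and finish along $P_k$; the arc through $c_i$ does not pick up $a_i$. Your arithmetic $2x_i=\gamma(C)$ for all $i$ $\Rightarrow$ $2\gamma(C)=3\gamma(C)$ is clean. In the case $|A\cap V(C)|\geq 2$, the arcs between cyclically consecutive vertices of $A\cap V(C)$ are genuine $A$-paths (their internal vertices lie in $V(C)\setminus A$) and their weights sum to $\gamma(C)$. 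In the case $|A\cap V(C)|=\{b\}$, disjointness of the $P_i$ indeed forces $b$ to lie on at most one of them; after discarding it the three points $b,c_j,c_k$ are distinct on $C$, and your five listed paths are all the $A$-paths in $P_j\cup P_k\cup C$ (the second $a_j$--$a_k$ arc is disqualified because it passes through $b\in A$). The resulting system again collapses to $\gamma(C)=0$.

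So the plan goes through without obstruction; writing out the $|A\cap V(C)|=1$ arithmetic explicitly (it is the same ``$2x=\gamma(C)$ for each arc'' pattern) would make the write-up complete.
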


\subsection{Tangles, $K_m$-models, and walls} \label{sec:defns}

A \emph{separation} in a graph $G$ is an ordered pair of subgraphs $(C,D)$ such that $C \cup D = G$.
The \emph{order} of a separation $(C,D)$ is $|V(C) \cap V(D)|$.
A separation of order at most $k$ is a \emph{$k$-separation}.
A \emph{tangle $\mct$ of order $k$} is a set of $(k-1)$-separations of $G$ such that
\begin{itemize}
\itemsep 0.5em \parskip 0em  \partopsep=0pt \parsep 0em  
	\item
	for every $(k-1)$-separation $(C,D)$, either $(C,D) \in \mct$ or $(D,C) \in \mct$,
	\item
	$V(C) \neq V(G)$ for all $(C,D) \in \mct$, and
	\item
	$C_1 \cup C_2 \cup C_3 \neq G$ for all $(C_1,D_1),(C_2,D_2),(C_3,D_3) \in \mct$.
\end{itemize}
Let $\mct$ be a tangle of order $k$ in a graph $G$.
Given $(C,D) \in \mct$, we say that $C$ is the \emph{$\mct$-small} side and $D$ is the \emph{$\mct$-large} side of $(C,D)$.
If $|X| \leq k-3$, then there is a unique 3-block $B$ of $G-X$ such that $B\cup X$ is not contained in any $\mct$-small side, and moreover we have $|B|\geq 4$.
We call $B$ the \emph{$\mct$-large 3-block} of $G-X$.
If $k' \leq k$, then the set $\mct'$ of $(k'-1)$-separations $(C,D)$ in $G$ such that $(C,D) \in \mct$ is a tangle of order $k'$, called the \emph{truncation} of $\mct$ to order $k'$.

Suppose $f:\mbn \to \mbn$ is not an Erd\H{o}s-P\'osa function for a family $\mcf$ of $A$-paths.
Let us say that $((G,\gamma),k)$ is a \emph{minimal counterexample to $f$ being an Erd\H{o}s-P\'osa function for $\mcf$} if $(G,\gamma)$ does not contain $k$ disjoint $A$-paths in $\mcf$ and there does not exist a set of at most $f(k)$ vertices intersecting every $A$-path in $\mcf$, and subject to this, $k$ is minimum.
The following lemma states that such a minimal counterexample admits a large tangle with special properties.

\begin{lemma} 
[Lemma 8 in \cite{BruUlm}]
\label{lemmatangleApaths}
Let $t$ be a positive integer and let $f:\mbn \to \mbn$ be function such that $f(k) \geq 2f(k-1)+3t+10$ and let $((G,\gamma),k)$ be a minimal counterexample to $f$ being an Erd\H{o}s-P\'osa function for the family of zero $A$-paths.
Then $G-A$ admits a tangle $\mct$ of order $t$ such that for each $(C,D) \in \mct$, $G[A \cup C]$ does not contain a zero $A$-path and $G[A\cup (D-C)]$ contains a zero $A$-path.
\end{lemma}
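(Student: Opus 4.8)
The plan is to construct $\mct$ directly from the family of zero $A$-paths and then verify the three tangle axioms, reading off the two extra properties along the way. For a $(t-1)$-separation $(C,D)$ of $G-A$ write $Z:=V(C)\cap V(D)$, so that $Z$ separates $V(C)\setminus Z$ from $V(D)\setminus Z$ in $G-A$, and call a subgraph \emph{poor} if it contains no zero $A$-path. I would declare $(C,D)\in\mct$ exactly when $G[A\cup C]$ is poor; then ``$G[A\cup C]$ contains no zero $A$-path'' is immediate for members of $\mct$. The workhorse is a \emph{cut lemma}: if $X_C$ meets every zero $A$-path of $G[A\cup C]$ and $X_D$ meets every zero $A$-path of $G[A\cup D]$, then $X_C\cup X_D\cup Z$ meets every zero $A$-path of $G$, since a zero $A$-path of $G$ avoiding $Z$ has connected interior in $G-A-Z$ and hence lies entirely inside $G[A\cup C]$ or inside $G[A\cup D]$. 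Since $((G,\gamma),k)$ is a counterexample, $G$ has no hitting set of size $\le f(k)$, and as $|Z|\le t-1\le f(k)$ the cut lemma already shows that $G[A\cup C]$ and $G[A\cup D]$ cannot both be poor, i.e.\ at most one of $(C,D),(D,C)$ lies in $\mct$. Axiom~2 is similar: if $V(C)=V(G-A)$ then $G[A\cup C]=G$ is not poor (else $\emptyset$ is a hitting set), so $(C,D)\notin\mct$. For Axiom~3, suppose $C_1\cup C_2\cup C_3=G-A$ with each $(C_i,D_i)\in\mct$; a zero $A$-path of $G$ avoiding $Z_1\cup Z_2\cup Z_3$ has, for each $i$, its interior on the $C_i$-side of $Z_i$ (the covering forces that otherwise the interior is empty, so the path is an edge of $G[A]\subseteq G[A\cup C_1]$), hence lies in some poor $G[A\cup C_i]$, impossible. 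So $Z_1\cup Z_2\cup Z_3$, a set of size $\le 3(t-1)\le f(k)$, hits every zero $A$-path of $G$, contradicting minimality.

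The crux is the other direction of Axiom~1: that at least one of $G[A\cup C]$, $G[A\cup D]$ is poor. Suppose both contain a zero $A$-path, and fix one of them, $P_C$, in $G[A\cup C]$ with endpoints $a,b$. (Note $k\ge 2$: no $\Gamma$-labelled graph is a counterexample at $k=1$, since an empty set hits every zero $A$-path of a graph that has none.) By the cut lemma the minimum hitting sets of $G[A\cup C]$ and of $G[A\cup D]$ sum to at least $f(k)-t+2$, so after relabelling we may assume the one for $G[A\cup D]$ is at least $\tfrac12(f(k)-t+2)\ge f(k-1)+t+6$, using $f(k)\ge 2f(k-1)+3t+10$. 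Now I would \emph{reserve} the set $W:=\{a,b\}\cup\bigl(V(P_C)\cap Z\bigr)$, of size at most $t+1$; deleting $W$ lowers the minimum hitting set of $G[A\cup D]$ by at most $|W|$, so $G[A\cup D]-W$ still has minimum hitting set exceeding $f(k-1)$. By minimality of $k$ the Erd\H{o}s--P\'osa property for $k-1$ disjoint zero $A$-paths holds with bound $f(k-1)$ in every $\Gamma$-labelled graph, so $G[A\cup D]-W$ contains $k-1$ disjoint zero $A$-paths $Q_1,\dots,Q_{k-1}$, each avoiding $W$ and contained in $G[A\cup D]$. Since the endpoints of $P_C$ lie in $W$ and its interior lies in $V(C)\setminus A$, which meets $A\cup V(D)$ only inside $Z$ while $V(P_C)\cap Z\subseteq W$, the path $P_C$ is disjoint from every $Q_i$. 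Then $P_C,Q_1,\dots,Q_{k-1}$ are $k$ disjoint zero $A$-paths in $G$, contradicting that $G$ is a counterexample.

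This establishes that $\mct$ is a tangle of order $t$ in $G-A$, and the remaining half of the stated conclusion follows once more from the cut lemma: for $(C,D)\in\mct$ the graph $G[A\cup C]$ is poor, so every zero $A$-path of $G$ avoiding $Z$ lies in $G[A\cup(D-C)]$, and if $G[A\cup(D-C)]$ were also poor then $Z$ would hit every zero $A$-path of $G$, which is impossible. I expect the reservation step in Axiom~1 to be the only genuinely delicate point: one must reserve not merely the two endpoints of $P_C$ but also its (at most $t-1$) vertices on the separator $Z$, and it is precisely this overhead, together with the factor-$\tfrac12$ loss when splitting the hitting-set inequality, that the recursion $f(k)\ge 2f(k-1)+3t+10$ is calibrated to absorb. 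Everything else is routine bookkeeping with separations of $G-A$.
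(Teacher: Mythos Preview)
The paper does not prove this lemma; it is quoted from Bruhn and Ulmer with no argument given (note the terminal \textsc{qed} in the statement). Your proposal supplies a complete, self-contained proof, and it is correct. Defining $\mct$ by ``$(C,D)\in\mct$ iff $G[A\cup C]$ contains no zero $A$-path'' and then verifying the tangle axioms is the standard approach; your treatment of the only substantive step---showing that at least one side of every $(t-1)$-separation is poor, by reserving $W=\{a,b\}\cup(V(P_C)\cap Z)$ and invoking minimality of $k$ on $G[A\cup D]-W$---is exactly right, and the arithmetic with $f(k)\ge 2f(k-1)+3t+10$ checks out.

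One small wording slip in your Axiom~3 verification: you write that a zero $A$-path avoiding $Z_1\cup Z_2\cup Z_3$ has, \emph{for each} $i$, its interior on the $C_i$-side of $Z_i$. That is too strong; what holds (and what you actually use in the conclusion) is that the interior lies on the $C_i$-side for \emph{some} $i$, since it is connected in $G-A$, avoids every $Z_i$, and is contained in $V(C_1)\cup V(C_2)\cup V(C_3)=V(G-A)$. With ``each'' replaced by ``some'' the argument is complete.
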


Let $v_1,\dots,v_m$ denote the vertices of the complete graph $K_m$.
A \emph{$K_m$-model} $\mu$ consists of a collection of disjoint trees $\mu(v_i)$, $i \in [m]$, and distinct edges $\mu(v_iv_j)$ for each distinct pair $i,j\in [m]$ such that $\mu(v_iv_j)$ has one endpoint in $\mu(v_i)$ and the other in $\mu(v_j)$.
For $U \subseteq \{v_1,\dots,v_m\}$, the \emph{$K_{|U|}$-submodel of $\mu$  restricted to $U$} is the $K_{|U|}$-model $\eta$ defined as $\eta(v_i)=\mu(v_i)$ and $\eta(v_iv_j)=\mu(v_iv_j)$ for all $v_i,v_j\in U$.
Let $(G,\gamma)$ be a $\Gamma$-labelled graph.
We say that a $K_m$-model $\mu$ of $G$ is \emph{$\Gamma$-odd} if, for every choice of four distinct indices $i,j,k,l\in [m]$, the $K_4$-submodel of $\mu$ restricted to $\{v_i,v_j,v_k,v_l\}$ contains a $\Gamma$-nonzero cycle.

Let $\mu$ be a $K_m$-model in $G$ and let $k = \lceil \frac{2m}{3}\rceil$.
If $(C,D)$ is a $(k-1)$-separation in $G$, then exactly one side of $(C,D)$ intersects every $\mu(v_i)$, $i \in [m]$.
The set of all $(k-1)$-separations $(C,D)$ such that $D$ intersects every $\mu(v_i)$ forms a tangle $\mct_\mu$ of order $k$ \cite{RobSeyX}, called the \emph{tangle induced by $\mu$}.

Let $r,s \geq 2$ be integers.
An \emph{$r\times s$-grid} is a graph with vertex set $[r]\times [s]$ and edge set
$\{(i,j)(i',j'):$ $|i-i'|+|j-j'| = 1\}$.
An \emph{elementary $r\times s$-wall} is the subgraph of an $(r+1)\times (2s+2)$-grid obtained by deleting the edges
\begin{equation*}
\left\{(2i-1,2j)(2i,2j): i \in \left[\lceil \tfrac{r}{2}\rceil\right], j \in [s+1]\right\}
\cup
\left\{(2i,2j-1)(2i+1,2j-1): i \in \left[\lceil \tfrac{r-1}{2}\rceil\right], j\in [s+1]\right\},
\end{equation*}
then deleting the two vertices of degree 1.
An \emph{elementary $r$-wall} is an elementary $r\times r$-wall.
Figure \ref{fig:wall} shows an elementary 6-wall.

\begin{figure}[t]
\begin{center}
\begin{tikzpicture}
\colorlet{hellgrau}{black!30!white}
\tikzstyle{smallvx}=[thick,circle,inner sep=0.cm, minimum size=2mm, fill=white, draw=black]
\tikzstyle{smallblackvx}=[thick,circle,inner sep=0.cm, minimum size=2mm, fill=black, draw=black]
\tikzstyle{squarevx}=[thick,rectangle,inner sep=0.cm, minimum size=3mm, fill=white, draw=black]
\tikzstyle{hedge}=[line width=1pt]
\tikzstyle{markline}=[draw=hellgrau,line width=6pt]

\def\wallheight{6}
\def\brickheight{0.7}

\pgfmathtruncatemacro{\lastrow}{\wallheight}
\pgfmathtruncatemacro{\penultimaterow}{\wallheight-1}
\pgfmathtruncatemacro{\lastrowshift}{mod(\wallheight,2)}
\pgfmathtruncatemacro{\lastx}{2*\wallheight+1}

\def\exhpath{3}
\def\exvpath{2}

\draw[hedge] (\brickheight,0) -- (2*\wallheight*\brickheight+\brickheight,0);
\foreach \i in {1,...,\penultimaterow}{
  \ifnum \i=\exhpath
    \draw[markline] (0,\i*\brickheight) -- (2*\wallheight*\brickheight+\brickheight,\i*\brickheight);
  \fi
  {\draw[hedge] (0,\i*\brickheight) -- (2*\wallheight*\brickheight+\brickheight,\i*\brickheight);}
}
\draw[hedge] (\lastrowshift*\brickheight,\lastrow*\brickheight) to ++(2*\wallheight*\brickheight,0);

\foreach \j in {0,2,...,\penultimaterow}{
  \foreach \i in {0,...,\wallheight}{
    \ifnum \i=\exvpath {
      \draw[line width=2.5pt] (2*\i*\brickheight+\brickheight,\j*\brickheight) to ++(0,\brickheight);
      \ifnum \j>0 
        \draw[line width=2.5pt] (2*\i*\brickheight+\brickheight,\j*\brickheight) to ++(-\brickheight,0);
      \fi
    }
    \else
      {\draw[hedge] (2*\i*\brickheight+\brickheight,\j*\brickheight) to ++(0,\brickheight);}
    \fi
  }
}
\foreach \j in {1,3,...,\penultimaterow}{
  \foreach \i in {0,...,\wallheight}{
    \ifnum \i=\exvpath {
      \draw[line width=2.5pt] (2*\i*\brickheight,\j*\brickheight) to ++(0,\brickheight);
      \draw[line width=2.5pt] (2*\i*\brickheight,\j*\brickheight) to ++(\brickheight,0);
    }
    \else
      \draw[hedge] (2*\i*\brickheight,\j*\brickheight) to ++(0,\brickheight);
    \fi
  }
}
\foreach \i in {1,...,\lastx}{
  \node[smallvx] (w\i w0) at (\i*\brickheight,0){};
}
\foreach \j in {1,...,\penultimaterow}{
  \foreach \i in {0,...,\lastx}{
    \node[smallvx] (w\i w\j) at (\i*\brickheight,\j*\brickheight){};
  }
}
\foreach \i in {1,...,\lastx}{
  \ifodd\i
    \node[smallvx] (w\i w\lastrow) at (\i*\brickheight+\lastrowshift*\brickheight-\brickheight,\lastrow*\brickheight){};
  \else
    \node[smallblackvx] (w\i w\lastrow) at (\i*\brickheight+\lastrowshift*\brickheight-\brickheight,\lastrow*\brickheight){};
  \fi
}
\node[squarevx] (w1 w\lastrow) at (1*\brickheight+\lastrowshift*\brickheight-\brickheight,\lastrow*\brickheight){};
\node[squarevx] (w\lastx w\lastrow) at (\lastx*\brickheight+\lastrowshift*\brickheight-\brickheight,\lastrow*\brickheight){};
\node[squarevx] (w1 w\lastrow) at (2*\brickheight-\lastrowshift*\brickheight-\brickheight,0){};
\node[squarevx] (w\lastx w\lastrow) at (\lastx*\brickheight-\lastrowshift*\brickheight,0){};
\end{tikzpicture}
\end{center}
\caption{An elementary 6-wall. The four corners are marked by square vertices and its top nails are the vertices filled black. The third vertical path is marked bold and the fourth horizontal path is highlighted in grey.}
\label{fig:wall}
\end{figure}

Let $W$ be an elementary $r\times s$-wall.
A path in $W$ with vertex set $\{(i',j') \in V(W): i'=i\}$ for some fixed $i \in [r+1]$ is called a \emph{horizontal path} of $W$.
Fix a planar embedding of $W$ and let $P^{(h)}_i, i \in [r+1]$ denote the $i$-th horizontal path, with the first horizontal path at the top of $W$ and last horizontal path at the bottom.
Then there is a unique set of $s+1$ disjoint $P^{(h)}_1$-$P^{(h)}_{r+1}$-paths in $W$ called the \emph{vertical paths} of $W$.
The $i$-th vertical path from left to right is denoted $P^{(v)}_i$.
The \emph{$(i,j)$-th brick} is the facial cycle of length 6 contained in the union $P_i^{(h)}\cup P_{i+1}^{(h)} \cup P_j^{(v)} \cup P_{j+1}^{(v)}$.
Note that the order of the $i$-th vertical/horizontal paths may be reversed depending on the orientation of the embedding.

The \emph{corners} of $W$ are the four vertices that are endpoints of $P_1^{(h)}$ or $P_{r+1}^{(h)}$.
The \emph{nails} of $W$ are the vertices of degree 2 that are not corners.
The \emph{top nails} of $W$ are the nails in the first horizontal path of $W$.

An \emph{$r\times s$-wall} or \emph{$r$-wall} is a subdivision of an elementary $r\times s$-wall or $r$-wall respectively.
The corners and nails of an $r$-wall are the vertices corresponding to the corners and nails respectively of the elementary $r$-wall before subdivision.
The \emph{branch vertices} $b(W)$ of a wall $W$ are the vertices of the elementary wall before subdivision, consisting of its corners, nails, and the vertices of degree 3 in $W$.
All other terminology on elementary walls in the preceding paragraphs extend to walls in the obvious way.

Let $r' \leq r$ and $s'\leq s$.
An \emph{$r'\times s'$-subwall} of an $r\times s$-wall $W$ is an $r'\times s'$-wall $W'$ that is a subgraph of $W$ such that each horizontal and vertical path of $W'$ is a subpath of some horizontal and vertical path of $W$ respectively.
An $r'\times r'$-subwall is an \emph{$r'$-subwall}.
A subwall $W'$ is \emph{compact} if every horizontal or vertical path of $W$ that intersects $W'$ contains a horizontal or vertical path of $W'$.
A subwall $W'$ is \emph{$k$-contained} in $W$ if $P_i^{(h)}$ and $P_j^{(v)}$ are disjoint from $W'$ for all $i,j \leq k$ and for all $i > r-k+1$ and $j>s-k+1$.
If $W'$ is 1-contained in $W$, then there is a natural choice for the corners and nails of $W'$ with respect to $W$, consisting of the vertices that have degree 2 in $W'$ but degree 3 in $W$.

A wall $(W,\gamma)$ is \emph{facially $\Gamma$-odd} if every brick is a nonzero cycle.
With respect to a choice of corners and nails, $(W,\gamma)$ is a \emph{$\Gamma$-bipartite wall} if, after possibly shifting, every $b(W)$-path in $(W,\gamma)$ has weight 0.

Let $W$ be an $r$-wall in a graph $G$. 
If $(C,D)$ is a separation of order at most $r$ in $W$, then exactly one side of $(C,D)$ contains a horizontal path of $W$, and the set of $r$-separations $(C,D)$ such that $D$ contains a horizontal path forms a tangle $\mct_W$ of order $r+1$ \cite{RobSeyX}, called the \emph{tangle induced by $W$}.
If $W'$ is an $r'$-subwall of $W$, then $\mct_{W'}$ is a truncation of $\mct_W$ to order $r'+1$. 

\begin{theorem}[(2.3) in \cite{RobSeyTho}]
\label{tanglewallthm}
For all $r \in \mbn$, there exists $\omega(r) \in \mbn$ such that for all graphs $G$, if $G$ admits a tangle $\mct$ of order $\omega(r)$, then $G$ contains an $r$-wall $W$ such that $\mct_W$ is a truncation of $\mct$.
\end{theorem}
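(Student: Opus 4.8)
This is a classical fact, and the plan would be to combine the Excluded Grid Theorem with a tangle-consistency argument. The statement has two halves: (i) a tangle of sufficiently large order forces a large wall somewhere in $G$, and (ii) the wall can be chosen so that the tangle it induces is consistent with $\mct$. For (i): a tangle of order $\theta$ certifies, by tangle--branch-width duality, that $G$ has branch-width at least $\theta$, hence tree-width at least $\tfrac{2}{3}\theta-1$; by the Excluded Grid Theorem there is a function $g$ with $g(\theta)\to\infty$ such that tree-width at least $\theta$ forces a $g(\theta)\times g(\theta)$ grid minor, and a grid of that size contains a subdivision of an elementary $\Theta(g(\theta))$-wall as a subgraph. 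So, choosing $\omega(r)$ large enough, we obtain an $r_0$-wall $W_0$ in $G$ with $r_0$ as large as we wish relative to $r$.

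The real content is (ii). The tangles $\mct_{W_0}$ and the truncation of $\mct$ to order $r_0+1$ are defined on the same set of $r_0$-separations of $G$, so they either agree --- which is exactly the statement that $\mct_{W_0}$ is a truncation of $\mct$ --- or they disagree on some $r_0$-separation $(C,D)$, and disagreement forces a horizontal path of $W_0$ to lie inside a $\mct$-small side of order at most $r_0$. Since $r_0 \gg r$, the plan is to pass to a compact $r$-subwall $W$ lying deep in the interior of $W_0$, and to argue --- via Menger-type linkage arguments inside $W_0$ --- that no $\mct$-small side of order at most $r$ can contain a horizontal path of such a $W$. That is precisely the condition that every $r$-separation $(C,D)$ whose big side $D$ contains a horizontal path of $W$ belongs to $\mct$, i.e.\ that $\mct_W$ is a truncation of $\mct$.

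I expect step (ii) to be the main obstacle: producing \emph{some} large wall is routine, but pinning its tangle down to the restriction of $\mct$ requires controlling, for every small separation in $\mct$, which portions of the wall it can cut off. A cleaner alternative --- and presumably what the cited proof does --- is to avoid extracting an arbitrary wall and instead build the wall \emph{inside} the region $\mct$ points to: begin with the $\mct$-large 3-block of $G$, and construct the horizontal and vertical paths of the wall one at a time, at each step finding the required linkage in the $\mct$-big side guaranteed by the tangle axioms, so that $\mct_W$ is a truncation of $\mct$ by construction. The price is careful quantitative bookkeeping on how far the order of the relevant tangle drops at each step, and this is where the growth of $\omega(r)$ comes from.
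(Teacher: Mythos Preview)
The paper does not prove this theorem at all: it is quoted from Robertson--Seymour--Thomas \cite{RobSeyTho} and closed with a \qedsymbol\ immediately after the statement. So there is no ``paper's own proof'' to compare against; anything you write is necessarily beyond what the paper does.

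That said, your first route through (ii) has a real gap. Once you obtain an arbitrary $r_0$-wall $W_0$ from the Excluded Grid Theorem, nothing ties $W_0$ to $\mct$: the entire wall $W_0$ may sit on the $\mct$-small side of a separation of order far below $r_0$ (think of $G$ the disjoint union of two huge grids, with $\mct$ pointing at one and the grid theorem handing you the other). In that situation \emph{every} subwall $W\subseteq W_0$ has $\mct_W$ disagreeing with the truncation of $\mct$ on that one separation, and no ``Menger-type linkage argument inside $W_0$'' can help, because the obstruction is global, not internal to $W_0$. So the subwall-cleanup idea cannot be rescued without some additional input relating $W_0$ to $\mct$ in the first place.

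Your second route --- build the wall while steering with $\mct$, always working in $\mct$-big sides --- is the correct one, and is indeed how the cited result is proved. Since the paper merely cites the theorem, acknowledging that this is the intended approach (and that the first approach fails for the reason above) is all that is needed here.
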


A \emph{linkage} is a set of disjoint paths.
If $X,Y \subseteq V(G)$, an \emph{$X$-linkage} is a set of disjoint $X$-paths and an \emph{$X$-$Y$-linkage} is a set of disjoint $X$-$Y$-paths.

Let $\mu$ be a $K_t$-model in a graph $G$.
We say that a linkage $\mcp$ \emph{nicely links to $\mu$} if each path in $\mcp$ has exactly one endpoint in $\mu$, has no internal vertex in $\mu[V(K_t)]$, and each tree of $\mu$ intersects at most one path of $\mcp$.
The following lemma allows us to find a large linkage that nicely links to a large submodel of a given $K_t$-model. 
We use the formulation in \cite{BruUlm}, which also follows from (5.3) in \cite{RobSeyXIII}.

\begin{lemma}
[Lemma 10 in \cite{BruUlm}; see also (5.3) in \cite{RobSeyXIII}]
\label{nicelinkmodellemma}
Let $\ell,t \in \mbn$ with $t \geq 3\ell$.
Let $G$ be a graph with $A \subseteq V(G)$, and let $\mu$ be a $K_t$-model in $G$ disjoint from $A$.
Then there is a $K_{t-2\ell}$-submodel $\eta$ of $\mu$ such that either there is an $A$-$\eta$-linkage of size $\ell$ that nicely links to $\eta$, or there exists $X \subseteq V(G)$ with $|X| < 2\ell$ separating $A$ from $\eta$. 
\end{lemma}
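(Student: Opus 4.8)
The plan is to combine a Menger-type argument with a rerouting step that forces the resulting linkage to link nicely to a submodel of $\mu$.

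First I would dispose of the easy outcome. If $A$ can be separated from $V(\mu)$ by a set $X$ with $|X|<2\ell$, then any $K_{t-2\ell}$-submodel $\eta$ of $\mu$ works: such an $\eta$ exists because $t-2\ell\ge\ell\ge 1$, and $X$ separates $A$ from $V(\eta)\subseteq V(\mu)$. So assume no such $X$ exists; then by Menger's theorem there are $\ell$ pairwise disjoint $A$-$V(\mu)$-paths, and truncating each at its first vertex in $V(\mu)$ we may take them to be $A$-$\mu$-paths $Q_1,\dots,Q_\ell$, i.e.\ each $Q_i$ meets $V(\mu)$ in exactly one endpoint $x_i$. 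Since the connecting edges of $\mu$ contribute no vertices, $x_i$ lies in a unique tree $\mu(v_{k_i})$, and the $Q_i$ avoid all connecting edges of $\mu$. If $k_1,\dots,k_\ell$ are pairwise distinct, then $\{Q_1,\dots,Q_\ell\}$ already nicely links to $\mu$, and hence to $\eta:=\mu$ restricted to any $(t-2\ell)$-set $U\supseteq\{v_{k_1},\dots,v_{k_\ell}\}$ of branch vertices (which exists since $\ell\le t-2\ell$), and we are done.

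The substantive case is when some tree $T=\mu(v_k)$ carries $r\ge 2$ of the paths, at vertices $x_{i_1},\dots,x_{i_r}$. I would work with a minimal realization of $\mu$, so that every leaf of every tree is incident with a connecting edge; then for any $x\in V(T)$ every component of $T-x$ still contains a leaf of $T$. The aim is to route $x_{i_1},\dots,x_{i_r}$ along pairwise disjoint paths inside $T$ to $r$ distinct connecting edges of $T$, where, using $t\ge 3\ell$, one can in addition arrange that the trees at the far ends of these edges currently carry no path. Deleting $v_k$ from the model and extending $Q_{i_1},\dots,Q_{i_r}$ along these edges into those fresh trees then produces $\ell$ disjoint paths that form an $A$-$\mu'$-linkage for the smaller model $\mu'$ — each detour lies inside the now-deleted tree $\mu(v_k)$ and along edges no longer present in $\mu'$, so it creates no internal vertex in $\mu'$ — while strictly decreasing the number of trees carrying two or more paths. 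Iterating, at most one tree is deleted per step and at most $\ell$ in total, well inside the budget of $2\ell$, and we terminate with a nice linkage of size $\ell$ to a submodel, which restricts to one on $t-2\ell$ branch vertices. (A clean way to organize the ``distinct trees'' target is to apply Menger not to $A$ and $V(\mu)$ but to $A$ and the vertex set obtained from $G$ by contracting each tree of $\mu$ to a single vertex: success there yields $\ell$ disjoint $A$-$\mu$-paths ending in distinct trees at once.)

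The main obstacle is that the internal routing in $T$ may be impossible — precisely when the connecting edges of $T$ attach at too few vertices, i.e.\ there is $Z\subseteq V(T)$ with $|Z|<r$ separating $\{x_{i_1},\dots,x_{i_r}\}$ from the connecting-edge endpoints of $T$ within $T$. In that situation one is thrown into the separator outcome rather than the linkage outcome, and the delicate part is to produce, for a suitable submodel, a separator of size below $2\ell$: I would collect the obstruction sets $Z$ over the ``bottleneck'' trees and the cut from the initial Menger step, delete the bottleneck trees from the model, and verify that the result separates $A$ from the surviving submodel. The accounting is what needs genuine care — each bottleneck tree with $r$ affected paths is deleted (one tree) and contributes fewer than $r$ vertices to the separator, and since the affected paths belonging to different bottleneck trees are disjoint these contributions total less than $\ell$ — together with a short induction, using minimality of the trees, that decides exactly when $r$ disjoint internal routes exist and constructs them otherwise. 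The incidental facts — that truncation preserves disjointness, that the rerouted paths avoid the fresh trees and the surviving connecting edges, and that a nice linkage to $\mu$ restricts to a nice linkage to any submodel containing the used trees — are routine.
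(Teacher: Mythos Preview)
The paper does not prove this lemma; it is quoted as Lemma~10 of \cite{BruUlm} and closed with \qed. There is therefore no proof in the present paper to compare your attempt against.

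On its own merits, your plan has a real gap in the bottleneck case. You are working in the branch where the first Menger call already succeeded and produced $\ell$ disjoint $A$--$V(\mu)$-paths $Q_1,\dots,Q_\ell$, so there is no ``cut from the initial Menger step'' to collect. Your proposed separator is then just $\bigcup_T Z_T$ taken over the bottleneck trees $T$, a set lying entirely inside those trees. But consider any $Q_j$ whose endpoint $x_j$ lies in a \emph{non}-bottleneck tree $T'$: you delete only bottleneck trees, so $T'$ survives in $\eta$, and $Q_j$ is an $A$--$V(\eta)$-path disjoint from every $Z_T$. More generally, $Z_T$ only controls how walks traverse $T$ from the side of the $x_{i_j}$'s to the connecting-edge attachments; it says nothing about $A$--$V(\eta)$-paths that avoid $T$ altogether, or that enter $T$ at a vertex already on the attachment side of $Z_T$. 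So the set you build does not separate $A$ from $\eta$, and your (correct) size accounting is beside the point.

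Your parenthetical suggestion --- apply Menger after contracting each branch set $\mu(v_i)$ to a single vertex --- is the more promising line and is closer to how such statements are usually proved: success there gives $\ell$ disjoint paths ending in distinct trees at once. What your sketch does not address is the failure case of \emph{that} Menger call, where the cut may contain contracted tree-vertices; converting such a cut into an honest small vertex set of $G$ that separates $A$ from a suitable submodel is precisely where the slack between $\ell$ and $2\ell$ is spent, and this step is missing from your outline.
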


Let $W$ be a wall with top nails $N$ in a graph $G$.
We say that a linkage $\mcp$ \emph{nicely links to $W$} if each path in $\mcp$ is contained in $G-(W-N)$, has exactly one endpoint in $N$, and has no internal vertex in $N$.
The next lemma allows us to find a large linkage that nicely links to a large subwall of a given wall.

\begin{lemma}
[Lemma 12 in \cite{BruUlm}]
\label{nicelinkwalllemma}
Let $r,t \in\mbn$ with $r\geq t$.
Let $G$ be a graph with $A\subseteq V(G)$, and let $W$ be a wall of size at least $4tr$ in $G$ disjoint from $A$.
Then $W$ has an $r$-subwall $W_1$ such that either there are $t$ disjoint $A$-$W_1$-paths that nicely link to $W_1$ or there exists $X \subseteq V(G)$ with $|X|<3t^2$ separating $A$ from $W_1$. 
\end{lemma}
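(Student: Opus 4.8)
The plan follows the template of Lemma~\ref{nicelinkmodellemma}, with the wall $W$ in place of the $K_t$-model. Since $W$ has size at least $4tr$, it contains an $r$-subwall $W_0$. Let $M$ be the minimum size of a vertex set separating $A$ from $V(W)$ in $G$. If $M<3t^2$, take a minimum such separator $X$; then $|X|<3t^2$, and since $X$ separates $A$ from $V(W)\supseteq V(W_0)$ it separates $A$ from $W_0$, so the second alternative of the lemma holds with $W_1:=W_0$. Hence I may assume $M\ge 3t^2$, and the whole point becomes to produce the nice linkage.

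By Menger's theorem there are $3t^2$ pairwise disjoint $A$-$V(W)$-paths $P_1,\dots,P_{3t^2}$; since $A\cap V(W)=\emptyset$, the $i$-th path meets $V(W)$ only in its end $w_i\in V(W)$, and the $w_i$ are distinct. Because $W$ has size at least $4tr$ with $r\ge t$, it contains more than $3t^2$ pairwise disjoint $r$-subwalls that are $1$-contained in $W$ (tile $W$ by disjoint subwall blocks); since these subwalls are pairwise disjoint, at most $3t^2$ of them contain a vertex among $w_1,\dots,w_{3t^2}$, so some member $W_1$ of the family contains none of them. Let $N_1$ be the top nails of $W_1$, so $|N_1|\ge r\ge t$. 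I claim it now suffices to find $t$ pairwise disjoint paths in $H:=W-(V(W_1)\setminus N_1)$, each from some $w_i$ to a vertex of $N_1$ and meeting $N_1$ only in that vertex (one may truncate to arrange the last condition, and distinctness of the $t$ ends then follows from disjointness). Call these $R_1,\dots,R_t$, with $R_s$ running from $w_{i_s}$ to $n_s\in N_1$, and set $Q_s:=P_{i_s}\cup R_s$. The $Q_s$ are pairwise disjoint, because the $P_{i_s}$ are disjoint with interiors avoiding all of $V(W)$, the $R_s$ are disjoint subgraphs of $W$, and $V(P_{i_s})\cap V(W)=\{w_{i_s}\}\subseteq V(R_s)$. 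Moreover $Q_s$ joins a vertex of $A$ to $n_s\in N_1$, all its internal vertices avoid $V(W_1)$ (those on $P_{i_s}$ avoid $V(W)\supseteq V(W_1)$; $w_{i_s}\notin V(W_1)$; the interior of $R_s$ avoids $V(W_1)\setminus N_1$ and also $N_1$), and $Q_s\subseteq G-(V(W_1)\setminus N_1)$ with exactly one endpoint in $N_1$ and no internal vertex in $N_1$. Hence $\{Q_1,\dots,Q_t\}$ is an $A$-$W_1$-linkage of size $t$ that nicely links to $W_1$, giving the first alternative of the lemma.

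What remains — and what I expect to be the main obstacle — is the purely wall-theoretic routing claim behind the previous paragraph: after deleting from a $(4tr)$-wall $W$ the interior of a $1$-contained $r$-subwall $W_1$ that avoids a fixed set $\{w_1,\dots,w_{3t^2}\}$ of $3t^2$ vertices, there are $t$ disjoint paths from $\{w_1,\dots,w_{3t^2}\}$ to the set $N_1$ of top nails of $W_1$. By Menger this is equivalent to the statement that no set of fewer than $t$ vertices separates $\{w_1,\dots,w_{3t^2}\}$ from $N_1$ in $H$, which I would prove by an isoperimetric argument for walls: deleting fewer than $t$ vertices from a $(4tr)$-wall cuts off only components of total size $O(t^2)<3t^2$ (a cycle-like separator on $c$ vertices encloses only $O(c^2)$ vertices of a wall), and such a deletion cannot cut $N_1$ off from the rest either, since $W_1$ is $1$-contained and the neighbours of $N_1$ span at least $r\ge t$ columns of $W$ strictly below the top row, so that walling them off would already cost at least $r\ge t$ vertices; hence any $\{w_i\}$--$N_1$ separator in $H$ has at least $t$ vertices. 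Making this rigorous — together with the explicit routing and the usual parity bookkeeping of walls — is where the work lies, but, exactly as in Lemma~\ref{nicelinkmodellemma}, no genuinely new idea beyond the size of $W$ is required.
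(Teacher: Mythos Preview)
The paper does not give its own proof of this lemma: it is quoted from \cite{BruUlm} and closed with a \qed, so there is no argument in the present paper to compare your proposal against.

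On the substance of your sketch: the overall architecture---apply Menger to get $3t^2$ disjoint $A$--$V(W)$-paths, choose a $1$-contained $r$-subwall $W_1$ missing all their feet (the tiling count $\approx 16t^2>3t^2$ gives this), then route inside $W$ from the feet to the top nails $N_1$---is the standard one and is correct in outline. You also correctly isolate the only nontrivial step, namely the wall-routing claim that in $H=W-(V(W_1)\setminus N_1)$ no set of fewer than $t$ vertices separates $\{w_1,\dots,w_{3t^2}\}$ from $N_1$. Your isoperimetric heuristic points in the right direction, but as written it is not yet a proof: you need a precise bound showing that deleting $c<t$ vertices from a wall isolates at most (some explicit constant)$\cdot c^2$ vertices, and that constant must beat the $3$ in $3t^2$; you must also argue carefully near the hole left by excising $W_1$, where $H$ is no longer a wall and the naive isoperimetry does not directly apply. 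None of this is deep, but it is exactly the content carried by the citation to \cite{BruUlm}, and the present paper adds nothing to it.
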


Let $<$ be a linear order on a vertex set $X$ and let $P$ and $Q$ be disjoint $X$-paths with endpoints $(p_1,p_2)$ and $(q_1,q_2)$ respectively such that $p_1<p_2$ and $q_1<q_2$.
We say that $P$ and $Q$ are \emph{in series} if $p_2 < q_1$ or $q_2 < p_1$, \emph{nested} if $p_1<q_1<q_2<p_2$ or $q_1<p_1<p_2<q_2$, and \emph{crossing} if they are neither in series nor nested.
An $X$-linkage $\mcp$ is \emph{in series, nested}, or \emph{crossing} if every pair of paths in $\mcp$ is in series, nested, or crossing, respectively.
An $X$-linkage is \emph{pure} if it is in series, nested, or crossing. 

Let $(W,\gamma)$ be a wall in $(G,\gamma)$ with top nails $N$.
A \emph{linkage of $(W,\gamma)$} is an $N$-linkage in $(G-(W-N),\gamma)$.
A linkage of $(W,\gamma)$ is \emph{pure} if it is pure with respect to a linear ordering of $N$ given by the top horizontal path of $(W,\gamma)$.
If $(W,\gamma)$ is a $\Gamma$-bipartite wall, then we say that a linkage $\mcp$ of $(W,\gamma)$ is \emph{$\Gamma$-odd} if $(W\cup P,\gamma)$ contains a nonzero cycle for all $P\in\mcp$.

We are now ready to state the structure theorem of \cite{ThoYooa}.

\begin{theorem}[Theorem 2.10 in \cite{ThoYooa}]
\label{flatwallundirectedtheorem}
Let $\Gamma$ be an abelian group and let $r,t\geq 1$ be integers. Then there exist integers $g(r,t)$ and $h(r,t)$ such that if a $\Gamma$-labelled graph $(G,\gamma)$ contains a wall $(W,\gamma)$ of size at least $g(r,t)$, then one of the following outcomes hold:
\begin{enumerate}
	\item[(1)]
	There is a $\Gamma$-odd $K_t$-model $\mu$ in $(G,\gamma)$ such that $\mct_\mu$ is a truncation of $\mct_W$.
	\item[(2)]
	There exists a $50r^{12}$-wall $(W_0,\gamma)$ in $(G,\gamma)$ such that $\mct_{W_0}$ is a truncation of $\mct_W$ and either
	\begin{enumerate}
		\item
		$(W_0,\gamma)$ is facially $\Gamma$-odd, or
		\item
		$(W_0,\gamma)$ is a $\Gamma$-bipartite wall with a pure $\Gamma$-odd linkage of size $r$.
	\end{enumerate}
	\item[(3)]
	There exists $Z \subseteq V(G)$ with $|Z|\leq h(r,t)$ such that the $\mct_W$-large 3-block of $(G-Z,\gamma)$ is $\Gamma$-bipartite. 
\end{enumerate}
\end{theorem}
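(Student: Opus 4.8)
We argue by deriving the alternatives directly: assuming outcome~(3) fails --- that is, no set $Z$ with $|Z|\le h(r,t)$ is such that the $\mct_W$-large $3$-block of $G-Z$ is $\Gamma$-bipartite --- we show that one of~(1), (2a), (2b) holds. Throughout we work with $(W,\gamma)$ and the tangle $\mct_W$ it induces, and the guiding idea is to run the Robertson--Seymour flat-wall machinery while tracking the labelling $\gamma$. The argument splits into a clique-minor branch and a flat branch, with $\gamma$ analysed separately in each.

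\emph{The clique-minor branch.} Apply the standard dichotomy underlying the flat-wall theorem to $\mct_W$: either $\mct_W$ has a truncation of the form $\mct_\mu$ for a $K_s$-model $\mu$ in $G$, with $s$ large in terms of $r,t$, or there is a set $Z_0$ of bounded size such that the $\mct_W$-large $3$-block of $G-Z_0$ is flat. In the first case, test for every four branch vertices of $\mu$ whether the corresponding $K_4$-submodel contains a nonzero cycle; a Ramsey argument on $4$-subsets yields a large $K_{s'}$-submodel in which either \emph{every} $K_4$-restriction has a nonzero cycle, so that a $K_t$-submodel is a $\Gamma$-odd $K_t$-model and outcome~(1) holds, or \emph{no} $K_4$-restriction has a nonzero cycle, so the submodel is $\Gamma$-bipartite. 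In the latter case its core is $3$-connected, so by Lemma~\ref{lemma3connshiftequivalent} we may shift the model to be identically zero; since outcome~(3) fails, nonzero cycles still meet the model through its bridges, and either enough such bridges are spread over the model to reconstruct a $\Gamma$-odd $K_t$-model (outcome~(1)) or they are confined to a bounded vertex set which we absorb into the apex set, reducing to the flat branch.

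\emph{The flat branch.} Now there is a bounded set $Z$ and a large subwall $W_1$, with $\mct_{W_1}$ a truncation of $\mct_W$, such that the $\mct_W$-large $3$-block of $G-Z$ is flat. Using flatness --- in a planar $3$-block the cycle space is generated by the bounded faces --- a clean-up of the bricks of $W_1$ shows that either a large subwall is facially $\Gamma$-odd, which is outcome~(2a), or, after shifting, a large subwall $W_1$ is a $\Gamma$-bipartite wall. In the latter case the decisive objects are the \emph{odd} $W_1$-bridges of $G-Z$, those $W_1$-bridges $\mcb$ for which $(W_1\cup\mcb,\gamma)$ contains a nonzero cycle. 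If some bounded set $Z'$ together with $Z$ meets every odd $W_1$-bridge, then, since $W_1$ is $\Gamma$-bipartite and flat so that every nonzero cycle of the large $3$-block must run through an odd bridge, the $\mct_W$-large $3$-block of $(G-(Z\cup Z'),\gamma)$ is $\Gamma$-bipartite, which is outcome~(3), contrary to assumption. Hence there are many odd $W_1$-bridges whose attachments are distributed over $W_1$ in a way controlled by $\mct_{W_1}$; routing them through the wall via the nice-linking lemmas (Lemmas~\ref{nicelinkmodellemma} and~\ref{nicelinkwalllemma}), and then purifying the resulting $N$-linkage into one that is in series, nested, or crossing by a Ramsey-type argument, we either reconstitute the odd bridges and wall paths into a $\Gamma$-odd $K_t$-model, which is outcome~(1), or obtain a pure $\Gamma$-odd linkage of size $r$ on a $50r^{12}$-subwall, which is outcome~(2b). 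Lemma~\ref{threepathscyclelemma} is the basic device for turning a nonzero cycle together with three disjoint paths into the nonzero configurations needed along the way.

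I expect the main obstacle to lie in the flat branch, in two places. First is the brick clean-up: passing from ``not every brick of the subwall is zero'' to ``a large subwall is facially $\Gamma$-odd'' is delicate, since a naive Ramsey argument on the two-coloured array of brick weights fails --- a checkerboard pattern of nonzero bricks contains no monochromatic sub-grid --- so one must exploit the planar structure, working with the two-dimensional partial sums of the brick weights and possibly re-shifting to localize the oddness. Second is showing rigorously that, when no bounded set meets the odd bridges, one can still route them into one of the two clean configurations: this requires simultaneously controlling the number of odd bridges, the spread of their attachments along the wall, and which of the three types of linkage survives purification, all while keeping every tangle truncation compatible so that the final $K_t$-model or $50r^{12}$-subwall still induces a truncation of $\mct_W$. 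The quantitative bookkeeping required to define $g(r,t)$ and $h(r,t)$ through the successive Ramsey, linking, and purification steps is routine but lengthy.
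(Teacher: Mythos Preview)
This theorem is not proved in the present paper. Observe the \qed\ immediately after the statement: Theorem~\ref{flatwallundirectedtheorem} is quoted as Theorem~2.10 of \cite{ThoYooa} and invoked throughout Section~\ref{sec:Apath0modp} purely as a black box. The paper supplies no argument for it whatsoever; its r\^ole here is to hand the proof of Theorem~\ref{apathsmodptheorem} one of three structural outcomes, each of which is then dispatched by Lemmas~\ref{oddKtApathslemma}, \ref{oddwallApathslemma}, \ref{bipwalloddlinkageApathslemma}, and \ref{bip3blockApathslemma}. There is therefore nothing in this paper to compare your proposal against.

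As to the proposal on its own terms: it is a plausible outline of the genre of argument that underlies such structure theorems (flat-wall dichotomy, Ramsey on $K_4$-submodels, brick-parity clean-up, analysis of odd bridges), but by your own admission it is a sketch with two substantive gaps left open --- the passage from ``some bricks nonzero'' to ``a large facially $\Gamma$-odd subwall'', and the routing/purification of odd bridges into either a $\Gamma$-odd $K_t$-model or a pure $\Gamma$-odd linkage. You correctly identify that a naive two-colour Ramsey on the brick array does not work; resolving this, and making the bridge-routing rigorous while keeping all tangle truncations compatible, is where the real content of the theorem lies, and your write-up does not supply it. If the intent is to make the present paper self-contained, you would need to reproduce the full argument from \cite{ThoYooa}; what you have written is a roadmap, not a proof.
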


\section{$A$-paths of length zero modulo a prime}
\label{sec:Apath0modp}

The remainder of the paper is dedicated to proving Theorem \ref{apathsmodptheorem}.

Let us briefly outline the proof.
Let $p$ be an odd prime and let $\Gamma=\mbz/p\mbz$.
Let $f$ be a fast-growing function and suppose $((G,\gamma),k)$ is a minimal counterexample to $f$ being an Erd\H{o}s-P\'osa function for $\Gamma$-zero $A$-paths.
Then $(G,\gamma)$ does not contain $k$ disjoint zero $A$-paths and does not contain a vertex set of size at most $f(k)$ hitting all zero $A$-paths.
Moreover, by Lemma \ref{lemmatangleApaths}, $G-A$ admits a large order tangle $\mct$ such that for each $(C,D)\in\mct$, $G[A\cup C]$ does not contain a zero $A$-path and $G[A\cup (D-C)]$ contains a zero $A$-path.
We then apply Theorem \ref{tanglewallthm} to obtain a large wall $W$ in $G-A$ such that $\mct_W$ is a truncation of $\mct$, and then apply Theorem \ref{flatwallundirectedtheorem} to $(G-A,\gamma)$ and $W$ to obtain one of its outcomes.

In outcomes (1) and (2) of Theorem \ref{flatwallundirectedtheorem}, there are many nonzero cycles and they can be pieced together to form many disjoint long \emph{nonzero cycle-chains} (defined in the following subsection), each of which contains a path of any desired weight. 
The ends of these cycle-chains can then be extended to $A$ using Lemmas \ref{nicelinkmodellemma} and \ref{nicelinkwalllemma} to obtain many disjoint zero $A$-paths.
This is done in subsection \ref{sec:nzcyclechains}.

In outcome (3) of Theorem \ref{flatwallundirectedtheorem}, there is a small vertex set $Z$ such that the $\mct$-large 3-block of $(G-A-Z,\gamma)$ is $\Gamma$-bipartite.
This case takes significantly more work, and is discussed in more detail in subsection \ref{sec:bip3block}.

In subsection \ref{sec:proof}, we complete the proof of Theorem \ref{apathsmodptheorem} by formalizing the above proof outline and combining the technical lemmas from subsections \ref{sec:nzcyclechains} and \ref{sec:bip3block}.

\subsection{$\Gamma$-nonzero $A$-cycle-chains}
\label{sec:nzcyclechains}

Let $l$ be a positive integer.
A \emph{cycle-chain of length $l$} is a tuple of paths $(P,Q_1,\dots,Q_l)$ consisting of a \emph{core path} $P$ and $l$ disjoint $V(P)$-paths $Q_i$ such that the $V(Q_i)$-subpaths $P_i$ of $P$ are disjoint from each other.
A cycle-chain in a $\Gamma$-labelled graph $(G,\gamma)$ is \emph{$\Gamma$-nonzero} (or simply \emph{nonzero}) if $\gamma(P_i) \neq \gamma(Q_i)$ for all $i \in [l]$.
An \emph{$A$-cycle-chain} is a cycle-chain $(P,Q_1,\dots,Q_l)$ such that $P$ is an $A$-path and $Q_i$ is disjoint from $A$ for all $i \in[l]$.

Let $\mcc = (P,Q_1,\dots,Q_l)$ be a nonzero $A$-cycle-chain in a $\Gamma$-labelled graph where $\Gamma = \mbz/p\mbz$ and $p$ is prime.
If $l$ is large enough, then $\mcc$ contains $A$-paths of all possible weights since every nonzero element of $\Gamma$ is a generator.
The optimal bound can be obtained from the Cauchy-Davenport Theorem \cite{Dav} which states that if $X,Y \subseteq \mbz/p\mbz$ and $p$ is prime, then $|X+Y| \geq \min(|X|+|Y|-1,p)$.

\begin{prop}\label{prop:nzcyclechain}
Let $\Gamma = \mbz/p\mbz$ where $p$ is prime.
Then a nonzero $A$-cycle-chain of length $p-1$ contains a zero $A$-path.
\end{prop}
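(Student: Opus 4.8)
The plan is to exploit the subset-sum structure of the cycle-chain together with the Cauchy--Davenport theorem. Let $\mcc = (P,Q_1,\dots,Q_{p-1})$ be a nonzero $A$-cycle-chain, with core path $P$ an $A$-path with endpoints $a,a' \in A$, and for each $i$ let $P_i$ be the $V(Q_i)$-subpath of $P$; these are pairwise disjoint subpaths of $P$. By rerouting $P$ through $Q_i$ instead of $P_i$ we obtain another $A$-path whose weight differs from $\gamma(P)$ by exactly $\delta_i := \gamma(Q_i) - \gamma(P_i)$, and since $\mcc$ is nonzero we have $\delta_i \neq 0$ in $\mbz/p\mbz$ for every $i$. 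Crucially, because the $P_i$ are disjoint, we can perform these reroutings simultaneously over any subset $S \subseteq [p-1]$: the resulting walk is still an $A$-path from $a$ to $a'$, and its weight is $\gamma(P) + \sum_{i \in S} \delta_i$.

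First I would record that the set of achievable weights of $A$-paths obtained this way is exactly $\gamma(P) + \bigl\{ \sum_{i \in S} \delta_i : S \subseteq [p-1] \bigr\}$. So it suffices to show that the subset-sum set $\Sigma := \{\sum_{i \in S}\delta_i : S \subseteq [p-1]\}$ equals all of $\mbz/p\mbz$, since then in particular $0 \in \gamma(P) + \Sigma$. Write $\Sigma_j = \{\sum_{i \in S}\delta_i : S \subseteq [j]\}$ for $0 \le j \le p-1$, so $\Sigma_0 = \{0\}$ and $\Sigma_j = \Sigma_{j-1} \cup (\Sigma_{j-1} + \delta_j) = \Sigma_{j-1} + \{0,\delta_j\}$. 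By the Cauchy--Davenport theorem, $|\Sigma_j| \ge \min(|\Sigma_{j-1}| + |\{0,\delta_j\}| - 1, p) = \min(|\Sigma_{j-1}| + 1, p)$ (using $\delta_j \neq 0$, so $|\{0,\delta_j\}| = 2$). Inductively $|\Sigma_j| \ge \min(j+1, p)$, hence $|\Sigma_{p-1}| \ge \min(p, p) = p$, i.e. $\Sigma_{p-1} = \mbz/p\mbz$.

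Then I would conclude: choosing $S \subseteq [p-1]$ with $\sum_{i\in S}\delta_i = -\gamma(P)$, the $A$-path obtained from $P$ by rerouting through $Q_i$ for each $i \in S$ has weight $0$, and it is disjoint from $A$ except at $a, a'$ since each $Q_i$ is disjoint from $A$ and $P$ is an $A$-path; so it is a zero $A$-path. The only point requiring a little care is the claim that simultaneous rerouting over $S$ yields a genuine $A$-path (not merely a walk) with the stated weight — this follows because the $P_i$ are pairwise disjoint subpaths of $P$ and each $Q_i$ is a $V(P)$-path internally disjoint from $P$, so replacing the segments $P_i$ ($i \in S$) by $Q_i$ produces a path traversing $P$ with finitely many detours, and weights add since $\Gamma$ is abelian. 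I do not anticipate a serious obstacle here; the essential content is the Cauchy--Davenport bound, which is quoted in the excerpt.
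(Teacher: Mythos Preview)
Your proposal is correct and follows essentially the same argument as the paper: define the differences $\delta_i = \gamma(Q_i)-\gamma(P_i)\neq 0$, observe that rerouting over any $S\subseteq[p-1]$ yields an $A$-path of weight $\gamma(P)+\sum_{i\in S}\delta_i$, and apply Cauchy--Davenport to the sumset $\{0,\delta_1\}+\cdots+\{0,\delta_{p-1}\}$ to conclude it is all of $\mbz/p\mbz$. Your write-up is in fact slightly more explicit than the paper's, spelling out the inductive size bound and the verification that simultaneous rerouting produces a genuine $A$-path.
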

\begin{proof}
Let $(P,Q_1,\dots,Q_{p-1})$ be a nonzero $A$-cycle-chain.
Let $P_i$ denote the $V(Q_i)$-subpath of $P$ and let $\alpha_i = \gamma(Q_i)-\gamma(P_i) \neq 0$.
By the Cauchy-Davenport Theorem, $\{0,\alpha_1\}+\{0,\alpha_2\}+ \dots +\{0,\alpha_{p-1}\} = \mbz/p\mbz$, hence there is a rerouting of $P$ through some of the paths $Q_i$ to obtain an $A$-path of any desired weight.
\end{proof}

Note that Proposition \ref{prop:nzcyclechain} does not hold for cycle-chains of length $p-2$ (consider the case $\gamma(P)=1=\alpha_i$ for all $i\in[p-2]$).
The condition that $p$ is prime is also necessary; if $\Gamma$ has a nontrivial proper subgroup $\Gamma'$, $\gamma(P)\not\in \Gamma'$, and $\gamma(Q_i)-\gamma(P_i) \in \Gamma'-\{0\}$ for all $i$, then the weight of every $A$-path in $P\cup Q_1\cup\dots\cup Q_k$ is in the coset $\Gamma'+\gamma(P)$, hence nonzero.

For the remainder of this subsection, we fix an odd prime $p$, let $\Gamma = \mbz/p\mbz$, and assume the existence of a $\Gamma$-labelled graph $(G,\gamma)$ with a vertex subset $A$ and a large order tangle $\mct$ of $(G-A,\gamma)$ satisfying the following properties (which come from the properties of minimal counterexamples):
\begin{enumerate}
	\item[(a)] there does not exist $X \subseteq V(G)$ with $|X| < 108k^2$ intersecting every zero $A$-path, and
	\item[(b)] for all $(C,D) \in \mct$, $(G[A\cup C],\gamma)$ does not contain a zero $A$-path and $(G[A\cup (D-C)],\gamma)$ contains a zero $A$-path.
\end{enumerate}
We now show how to construct $k$ disjoint long nonzero $A$-cycle-chains in outcomes (1) and (2) of Theorem \ref{flatwallundirectedtheorem}.

\begin{lemma}
[Lemma 3.1 in  \cite{ThoYooa}]
\label{longnzcyclechaincliquelemma}
Let $l \in \mbn$ and let $\mu$ be a $\Gamma$-odd $K_{5l+1}$-model.
Then there is a nonzero cycle-chain of length $l$ contained in $\mu$ whose core path is a $\mu(v_1)$-$\mu(v_{5l+1})$-path.
Moreover, the cycles in the cycle-chain are disjoint from $\mu(v_1)\cup \mu(v_{5l+1})$.
\end{lemma}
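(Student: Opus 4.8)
The plan is to prove this by induction on $l$, taking the base case $l=1$ as the heart of the matter and the inductive step as bookkeeping.

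\emph{Base case $l=1$.} Here $\mu$ is a $\Gamma$-odd $K_6$-model and I want a length-$1$ cycle-chain $(P,Q_1)$ whose core path $P$ is a $\mu(v_1)$-$\mu(v_6)$-path and whose single cycle $P_1\cup Q_1$ is disjoint from $\mu(v_1)\cup\mu(v_6)$. Since $\mu$ is $\Gamma$-odd, the $K_4$-submodel of $\mu$ restricted to $\{v_2,v_3,v_4,v_5\}$ contains a nonzero cycle $C$; such a $C$ lies in $\bigcup_{i=2}^{5}\mu(v_i)$, so it avoids $\mu(v_1)\cup\mu(v_6)$, and it runs through at least three of the four trees $\mu(v_2),\dots,\mu(v_5)$ and meets each such tree in a single subpath. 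I would fix two distinct indices $j,j'\in\{2,3,4,5\}$ with $V(C)\cap\mu(v_j)$ and $V(C)\cap\mu(v_{j'})$ both nonempty, let $L_1$ be the edge $\mu(v_1v_j)$ followed by the path in the tree $\mu(v_j)$ from its endpoint there to the vertex $c_1$ of $C\cap\mu(v_j)$ nearest that endpoint, and let $L_2$ be defined symmetrically from $\mu(v_6)$ into $\mu(v_{j'})$, ending at $c_2\in V(C)\cap\mu(v_{j'})$. As $j\neq j'$, the paths $L_1\subseteq\mu(v_1)\cup\mu(v_j)$ and $L_2\subseteq\mu(v_6)\cup\mu(v_{j'})$ are disjoint, and each meets $C$ only in its terminal vertex. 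Taking $P_1$ to be one of the two $c_1$-$c_2$ arcs of $C$ and $Q_1$ the other, one checks that $P:=L_1\cup P_1\cup L_2$ is a $\mu(v_1)$-$\mu(v_6)$-path, that $Q_1$ is a $V(P)$-path whose ends span exactly $P_1$ on $P$, and that $P_1\cup Q_1=C$, which is nonzero and avoids $\mu(v_1)\cup\mu(v_6)$.

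\emph{Inductive step.} For $l\geq 2$, I would apply the base case to the $K_6$-submodel of $\mu$ on $\{v_1,\dots,v_6\}$ (again $\Gamma$-odd), obtaining a length-$1$ cycle-chain $(P',Q'_1)$ with $P'$ a $\mu(v_1)$-$\mu(v_6)$-path and $P'_1\cup Q'_1$ a nonzero cycle inside $\bigcup_{i=2}^{5}\mu(v_i)$, and apply the inductive hypothesis to the $\Gamma$-odd $K_{5(l-1)+1}$-submodel of $\mu$ on $\{v_6,\dots,v_{5l+1}\}$, obtaining a length-$(l-1)$ cycle-chain $(P'',Q''_1,\dots,Q''_{l-1})$ with $P''$ a $\mu(v_6)$-$\mu(v_{5l+1})$-path whose cycles avoid $\mu(v_6)\cup\mu(v_{5l+1})$ and hence lie in $\bigcup_{i=7}^{5l}\mu(v_i)$. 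Letting $y,y'$ be the single vertices of $\mu(v_6)$ on $P'$ and $P''$ and $M$ the (possibly trivial) $y$-$y'$ path in the tree $\mu(v_6)$, I would set $P:=P'\cup M\cup P''$ and claim $(P,Q'_1,Q''_1,\dots,Q''_{l-1})$ is the required cycle-chain. The verification is that $P',M,P''$ pairwise overlap only in $\{y\}$ or $\{y'\}$, so $P$ is a $\mu(v_1)$-$\mu(v_{5l+1})$-path; that each chord, together with its spanning subpath of $P$, lies in a union of trees disjoint from the other two pieces of $P$, so each chord remains a $V(P)$-path closing up the same nonzero cycle as before; and that the spanning subpaths $P'_1,P''_1,\dots,P''_{l-1}$ occupy pairwise disjoint sets of trees, hence are pairwise disjoint. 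All $l$ cycles avoid $\mu(v_1)\cup\mu(v_{5l+1})$ by construction.

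Most of the inductive step is routine topological bookkeeping; the step I expect to require the most care is the base case, specifically producing the two disjoint linking paths $L_1,L_2$ that meet the nonzero cycle $C$ in exactly one vertex each. This leans on two facts: that each $\mu(v_i)$ is a tree, so the vertex of $C$ "nearest the entry point" is well-defined and the portion of the tree before it misses $C$; and that any cycle in a $K_4$-model runs through at least three of its four trees, which guarantees the two distinct trees $\mu(v_j),\mu(v_{j'})$ needed to attach $\mu(v_1)$ and $\mu(v_6)$ on opposite sides. The margin between $5l+1$ and the roughly $4l$ trees a first attempt might use is exactly what lets the leading $K_6$-block attach to $\mu(v_1)$ on one side while leaving $\mu(v_6)$ free to hand off to the remaining submodel on the other.
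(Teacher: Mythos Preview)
The paper does not prove this lemma; it is quoted verbatim from \cite{ThoYooa} (note the terminal \qed\ in the statement), so there is nothing here to compare your argument against. That said, your induction is the natural one and is essentially correct: slice the $5l+1$ branch sets into $l$ overlapping blocks of six, pull a nonzero cycle out of the $K_4$-submodel on the four interior trees of each block, and splice the core paths together through the shared tree.

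Two small points worth cleaning up. First, your parenthetical claim that the nonzero cycle $C$ ``meets each such tree in a single subpath'' is not true in general---a simple cycle in a $K_4$-model can enter and leave a branch tree more than once---but you never actually use this; your argument only needs that $C$ meets at least two of $\mu(v_2),\dots,\mu(v_5)$, which does hold, and your nearest-point construction of $c_1,c_2$ works regardless of how many components $C\cap\mu(v_j)$ has. Second, in the inductive step you assume $P''\cap\mu(v_6)$ is a single vertex $y'$. This is in fact forced by the paper's convention that an $A$-$B$-path has no \emph{internal} vertex in $A\cup B$, so the inductive hypothesis ``$P''$ is a $\mu(v_6)$-$\mu(v_{5l+1})$-path'' already gives it to you; just say so explicitly rather than leaving it tacit.
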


\begin{lemma}
\label{oddKtApathslemma}
Let $\mu$ be a $\Gamma$-odd $K_{5kp}$-model in $(G,\gamma)$ disjoint from $A$ such that $\mct_\mu$ is a truncation of $\mct$.
Then there exist $k$ disjoint zero $A$-paths.
\end{lemma}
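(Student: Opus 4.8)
The plan is to produce $k$ pairwise disjoint \emph{nonzero $A$-cycle-chains of length $p-1$}; Proposition~\ref{prop:nzcyclechain} then extracts a zero $A$-path from each one, and disjointness of the cycle-chains yields $k$ disjoint zero $A$-paths. Each cycle-chain will be assembled from a $\Gamma$-odd $K_{5(p-1)+1}$-submodel of $\mu$ together with two paths from $A$ glued onto it. The constant $5kp$ is chosen precisely so the arithmetic fits: after deleting $2\cdot 2k$ of the trees of $\mu$ there remain $5kp-4k=k\bigl(5(p-1)+1\bigr)$ trees, which we will split into $k$ blocks of size $5(p-1)+1$.

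First I would attach $A$ to the model. Apply Lemma~\ref{nicelinkmodellemma} with $\ell=2k$ (legitimate since $5kp\ge 6k$): either there is a $K_{5kp-4k}$-submodel $\eta$ of $\mu$ together with an $A$-$\eta$-linkage of size $2k$ nicely linking to $\eta$, or there is a set $X$ with $|X|<4k$ separating $A$ from $\eta$. I claim the second alternative is impossible. Just as $\mct_{W'}$ is a truncation of $\mct_W$ for a subwall $W'$, the induced tangle $\mct_\eta$ is a truncation of $\mct_\mu$, hence of $\mct$. Let $(C,D)$ be the separation of $G-A$ in which $D$ consists of $X$ together with all components of $G-X$ that meet the connected subgraph $\widetilde{\eta}$ underlying $\eta$. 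Then $\widetilde{\eta}\subseteq V(D)$, so $D$ meets every tree of $\eta$, and therefore $(C,D)\in\mct_\eta\subseteq\mct$, with $A\subseteq V(C)$. By (b), $G[A\cup C]$ has no zero $A$-path; but any zero $A$-path disjoint from $X$ must lie entirely in $V(C)$ — no edge of $G$ joins $V(C)\setminus X$ to $V(D)\setminus X$, and its ends lie in $A\subseteq V(C)$ — hence in $G[A\cup C]$, a contradiction. So $X$ meets every zero $A$-path, and $|X|<4k\le 108k^2$ contradicts (a). Thus we are in the linkage alternative.

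Now I would assemble the cycle-chains. A preliminary observation: a nicely-linking path uses no model edge of $\mu$ and meets $\widetilde{\eta}$ only in its $\eta$-endpoint, since a $K_t$-model edge is a single edge with both ends in trees, so traversing one would force an internal vertex of the path into a tree. Pair the $2k$ linkage paths arbitrarily into $k$ pairs; as each tree of $\eta$ meets at most one of them and the paths are disjoint, the $2k$ trees receiving endpoints are distinct. Partition the $k(5(p-1)+1)$ trees of $\eta$ into $k$ blocks of size $5(p-1)+1$ so that the two trees hit by the $i$-th pair lie in the $i$-th block, and let $\eta_i$ be the resulting $\Gamma$-odd $K_{5(p-1)+1}$-submodel, with those two trees designated as $\mu(v_1)$ and $\mu(v_{5l+1})$, $l=p-1$. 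Lemma~\ref{longnzcyclechaincliquelemma} gives a nonzero cycle-chain $(P^i,Q^i_1,\dots,Q^i_{p-1})$ in $\eta_i$ whose core path $P^i$ is a $\mu(v_1)$-$\mu(v_{5l+1})$-path (so it meets each designated tree only in its endpoint) and whose cycles avoid both designated trees. Since those trees are connected, route inside each of them from the $\eta$-endpoint of the matching linkage path to the corresponding end of $P^i$; concatenating these detours and the two linkage paths with $P^i$ produces an $A$-path $\mathbf{P}^i$, and the cycles $Q^i_j$ still attach to $\mathbf{P}^i$ along the same subpaths $P^i_j$, so $(\mathbf{P}^i,Q^i_1,\dots,Q^i_{p-1})$ is a nonzero $A$-cycle-chain of length $p-1$.

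Finally, the $k$ cycle-chains are pairwise disjoint: distinct $\eta_i$ are vertex-disjoint, the $2k$ designated trees (hence the in-tree detours) are distinct, the linkage paths are disjoint and meet $\widetilde{\eta}$ only in their endpoints, and the cycles of each $\eta_i$ avoid its two designated trees. Proposition~\ref{prop:nzcyclechain} extracts a zero $A$-path from each cycle-chain, and these $k$ paths are disjoint, as desired. The step I expect to need the most care is exactly this final bookkeeping — certifying that the glued-in linkage paths, the in-tree detours, the core paths, and the chain cycles never collide — which is precisely what the ``nice linking'' hypothesis together with the facts that the cycle-chain of Lemma~\ref{longnzcyclechaincliquelemma} avoids its two end-trees and that its core path meets those trees only in its endpoints are designed to guarantee. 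The separator subcase is the other delicate point, since it is the one place where the compatibility between $\mct$ and zero $A$-paths recorded in (a)--(b) must be invoked with the separation oriented correctly.
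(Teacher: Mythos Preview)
Your proposal is correct and follows essentially the same approach as the paper's proof: apply Lemma~\ref{nicelinkmodellemma} with $\ell=2k$, rule out the separator alternative via (a)--(b) and the fact that $\mct_\eta$ is a truncation of $\mct$, then split the remaining $k(5p-4)=k(5(p-1)+1)$ trees into $k$ blocks each carrying two linkage endpoints, invoke Lemma~\ref{longnzcyclechaincliquelemma} on each block, extend the core paths through the two designated trees and the two linkage paths, and finish with Proposition~\ref{prop:nzcyclechain}. Your write-up in fact supplies more of the disjointness bookkeeping (and the observation that nicely-linking paths avoid model edges) than the paper spells out explicitly.
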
	
\begin{proof}
We apply Lemma \ref{nicelinkmodellemma} with $\ell=2k$ and $t=5kp$ to obtain a $K_{k(5p-4)}$-submodel $\eta$ of $\mu$ such that either there is an $A$-$\eta$-linkage of size $2k$ that nicely links to $\eta$ or there exists $X \subseteq V(G)$ with $|X|<4k$ separating $A$ from $\eta$.
Note that the order of $\mct_\eta$ is greater than $4k$.

Suppose the latter outcome holds.
Then there exists a separation $(C,D)$ of $G$ with $V(C\cap D)=X$ such that $A \subseteq V(C)$ and $V(\eta) \subseteq V(D)$. 
Then $(C-A,D)$ is a $4k$-separation in $G-A$ and, since $V(\eta) \subseteq V(D)$, we have $(C-A,D) \in \mct_\eta \subseteq \mct$.
By (b), every zero $A$-path intersects $D-C$ and, since $A \subseteq V(C)$, it follows that $X$ intersects all zero $A$-paths, contradicting (a).

So there exists an $A$-$\eta$-linkage $\mcp=\{P_1,\dots,P_{2k}\}$ of size $2k$ that nicely links to $\eta$.
Assume without loss of generality that $P_i$ has an endpoint in $\eta(v_i)$.
Then there exist $k$ disjoint $K_{5p-4}$-submodels $\eta_i$ of $\eta$ such that $\eta_i$ contains $\eta(v_{2i-1})$ and $\eta(v_{2i})$.
By Lemma \ref{longnzcyclechaincliquelemma}, there is a nonzero cycle-chain of length $p-1$ in $\eta_i$ whose core path is a $\eta(v_{2i-1})$-$\eta(v_{2i})$-path.
Extending the core path through $\eta(v_{2i-1}) \cup \eta(v_{2i}) \cup P_{2i-1}\cup P_{2i}$, we obtain $k$ disjoint nonzero $A$-cycle-chains, each of length $p-1$.
The lemma now follows from Proposition \ref{prop:nzcyclechain}.
\end{proof}

\begin{lemma}
[Lemma 3.3 in \cite{ThoYooa}]
\label{longnzcyclechainwalllemma}
Let $(W,\gamma)$ be a facially $\Gamma$-odd $3l \times 2$-wall.
Then there is a nonzero cycle-chain of length $l$ in $(W,\gamma)$ whose core path is a $P^{(h)}_1$-$P^{(h)}_{3l+1}$-path, where $P^{(h)}_i$ is the $i$-th horizontal path of $W$.
Moreover, the cycles in the cycle-chain are disjoint from $P^{(h)}_1\cup P^{(h)}_{3l+1}$.
\end{lemma}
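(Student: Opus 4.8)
The plan is to run a single nearly-vertical ``spine'' $P$ through $W$ and to produce one link of the cycle-chain inside each of $l$ consecutive horizontal layers of $W$. Write $H_1,\dots,H_{3l+1}$ for the horizontal paths of $W$ from top to bottom (so $H_1=P^{(h)}_1$ and $H_{3l+1}=P^{(h)}_{3l+1}$), $V_1,V_2,V_3$ for its vertical paths from left to right, and $B_{i,j}$ ($i\in[3l]$, $j\in[2]$) for the brick lying in $H_i\cup H_{i+1}\cup V_j\cup V_{j+1}$. Partition the $3l$ rows of bricks into $l$ layers of three consecutive rows each, and in layer $j$ single out the middle-row bricks $B_{3j-1,1}$ and $B_{3j-1,2}$. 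Since $3j-1\in\{2,\dots,3l-1\}$, these bricks use only the horizontal paths $H_{3j-1}$ and $H_{3j}$; in particular they avoid $H_1$ and $H_{3l+1}$, and the singled-out bricks from different layers use disjoint sets of horizontal paths. I would take the core path $P$ to be a local perturbation of $V_2$ running from $H_1$ to $H_{3l+1}$ and, in layer $j$, route it so that one arc of $\partial B_{3j-1,1}$ (or of $\partial B_{3j-1,2}$) is a subpath $P_j$ of $P$, the complementary arc $Q_j$ --- which otherwise lies in $V_1\cup V_3\cup H_{3j-1}\cup H_{3j}$ --- being a $V(P)$-path. With this shape the links $P_j\cup Q_j$ are automatically pairwise disjoint and disjoint from $H_1\cup H_{3l+1}$, so the only real content is to guarantee, layer by layer, that $\gamma(P_j)\neq\gamma(Q_j)$.

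Here some care is needed, because facial $\Gamma$-oddness only gives $\gamma(P_j)+\gamma(Q_j)=\gamma(\partial B_{3j-1,\cdot})\neq 0$, which over $\mbz/p\mbz$ (and over a general abelian group) is weaker than the cycle-chain requirement $\gamma(P_j)\neq\gamma(Q_j)$. I would close the gap with an elementary fact: in any $\Gamma$-labelled graph, a nonzero cycle $D$ of length at least three can be split into two arcs $R,R'$ with $R\cup R'=D$ meeting only in their common endpoints so that $\gamma(R)\neq\gamma(R')$, and moreover $R$ can be taken to be a single edge of $D$ or two consecutive edges of $D$. Indeed, if every split of the form $\{e\}\mid D-e$ were balanced then $2\gamma(e)=\gamma(D)$ for every edge $e$ of $D$; adding this over two consecutive edges $e,e'$ gives $2(\gamma(e)+\gamma(e'))=2\gamma(D)$, and if the split $\{e,e'\}\mid D-e-e'$ were also balanced we would obtain $2\gamma(D)=\gamma(D)$, i.e.\ $\gamma(D)=0$, a contradiction. (If $W$ is a subdivided wall, one applies this to the six subdivided sides of a hexagonal face, so that $R$ is one side or two consecutive sides and its endpoints are branch vertices.) Applying this to each hexagonal face produces, for every brick, a ``short'' arc whose weight differs from that of the complementary arc.

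The remaining task, which I expect to be the main obstacle, is to thread $P$ through $W$ so that in each layer it picks out exactly the short arc supplied above, all the while staying disjoint from every $Q_j$, from $H_1\cup H_{3l+1}$, and from the portions of $P$ used in other layers. I would route $P$ straight down $V_2$ by default, so that $P_j$ is the $V_2$-side $S_j$ of $B_{3j-1,1}$ (which is shared with $B_{3j-1,2}$) and $Q_j$ is the rest of $\partial B_{3j-1,1}$; the ``bad'' case for a brick $B$ is precisely $2\gamma(S_j)=\gamma(\partial B)$, so whenever $\gamma(\partial B_{3j-1,1})\neq\gamma(\partial B_{3j-1,2})$ the single side $S_j$ is already a good split for one of the two bricks and the default routing works in that layer. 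When $\gamma(\partial B_{3j-1,1})=\gamma(\partial B_{3j-1,2})$, the side $S_j$ is bad for both, but then all two-side splits of $\partial B_{3j-1,1}$ are unbalanced, so instead I would bend $P$ through layer $j$ by a short detour picking up $S_j$ together with one adjacent side, making $P_j$ a two-side arc and $Q_j$ the complementary four-side arc; the two columns of bricks give the room for such a detour, and perturbations of $P$ in different layers live in disjoint horizontal strips and so do not interact. Bookkeeping this routing --- in particular checking that no detour reaches $H_1$ or $H_{3l+1}$ and that each $Q_j$ is genuinely a $V(P)$-path --- is where essentially all of the effort lies. Once it is carried out, $(P,Q_1,\dots,Q_l)$ is a $\Gamma$-nonzero cycle-chain of length $l$ whose core path is a $P^{(h)}_1$-$P^{(h)}_{3l+1}$-path, with every cycle disjoint from $P^{(h)}_1\cup P^{(h)}_{3l+1}$, as required.
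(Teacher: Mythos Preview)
The paper does not actually prove this lemma --- it is quoted from \cite{ThoYooa} and marked with \qed\ --- so there is no in-paper argument to compare against. I can only assess your outline on its own merits.

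Your overall plan (layer the wall into $l$ blocks of three rows, produce one nonzero link per layer, and thread a spine through) is the natural one, and your elementary splitting fact for nonzero cycles is correct. The gap is in your ``bad'' case. You assert that when the single-side split at $S_j$ is balanced for both middle-row bricks, ``all two-side splits of $\partial B_{3j-1,1}$ are unbalanced.'' This does not follow from your elementary fact (which only says \emph{some} one- or two-side split is unbalanced), and it is false. Over $\Gamma=\mbz/3\mbz$, give the six sides of $B_{3j-1,1}$ in cyclic order $S_j,a_1,a_2,a_3,a_4,a_5$ the weights $1,0,0,1,0,0$ (so $a_3$ is the $V_1$-side), and give the sides $S_j,b_1,\dots,b_5$ of $B_{3j-1,2}$ the same pattern $1,0,0,1,0,0$. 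Both bricks have weight $2$, and $2\gamma(S_j)=2$, so we are in your bad case; yet the two-side split $\{S_j,a_1\}$ has arcs of weights $1$ and $1$, hence balanced.

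Worse, in this example \emph{every} arc of $B_{3j-1,1}$ joining $H_{3j-1}$ to $H_{3j}$ has weight $1$: such an arc must contain exactly one of the two vertical sides $S_j,a_3$, and the remaining sides it picks up all have weight $0$. The same holds for $B_{3j-1,2}$ and for their union $C_1\triangle C_2$ (whose only nonzero sides are $a_3,b_3$, again the two vertical ones). So no routing of $P$ through row $3j-1$, with $Q_j$ taken inside that row, yields an unbalanced link. Your scheme of confining the $j$-th cycle entirely to the middle row cannot succeed in general; the two buffer rows in each layer have to contribute to the cycle itself, not merely to re-routing the spine. That is a missing idea, not bookkeeping.
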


\begin{lemma}
\label{oddwallApathslemma}
Let $(W,\gamma)$ be a facially $\Gamma$-odd wall of size at least $2592k^3p$ in $(G,\gamma)$ disjoint from $A \subseteq V(G)$ such that $\mct_W$ is a truncation of $\mct$.
Then there exist $k$ disjoint zero $A$-paths.
\end{lemma}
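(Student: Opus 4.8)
The plan is to follow the proof of Lemma~\ref{oddKtApathslemma} closely, with walls playing the role of clique models: apply Lemma~\ref{nicelinkwalllemma} in place of Lemma~\ref{nicelinkmodellemma}, and Lemma~\ref{longnzcyclechainwalllemma} in place of Lemma~\ref{longnzcyclechaincliquelemma}. Concretely, I would apply Lemma~\ref{nicelinkwalllemma} to $(W,\gamma)$ and $A$ with $t=6k$ and $r=108k^2p$; since $4tr=2592k^3p$ and $r\geq t$, the hypothesis is met, and we obtain an $r$-subwall $W_1$ of $W$ for which one of two things holds: either there are $6k$ disjoint $A$--$W_1$-paths nicely linking to the top nails of $W_1$, or there is $X\subseteq V(G)$ with $|X|<3t^2=108k^2$ separating $A$ from $W_1$.

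The separating case is dispatched exactly as in Lemma~\ref{oddKtApathslemma}. Since $W_1$ is a subwall of $W$, $\mct_{W_1}$ is a truncation of $\mct$; the separation $(C,D)$ witnessing that $X$ separates $A$ from $W_1$ (with $A\subseteq V(C)$ and $W_1\subseteq D$) satisfies $(C-A,D)\in\mct_{W_1}\subseteq\mct$, so by~(b) every zero $A$-path meets $D-C$, hence $X$ meets every zero $A$-path. As $|X|<108k^2$, this contradicts~(a).

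So we may assume there is a linkage $\mcp$ of $6k$ disjoint $A$--$W_1$-paths nicely linking to distinct top nails $n_1<\dots<n_{6k}$ of $W_1$. The goal is to build, using $2k$ of these paths and the interior of $W_1$, a family of $k$ pairwise disjoint nonzero $A$-cycle-chains of length $p-1$; then Proposition~\ref{prop:nzcyclechain} extracts from each a zero $A$-path, and the $k$ of these are disjoint, completing the proof. To do this I would locate inside $W_1$ a family of $k$ pairwise disjoint facially $\Gamma$-odd $3(p-1)\times 2$-subwalls $S_1,\dots,S_k$ (using that $W$, hence suitable subwalls of $W_1$, is facially $\Gamma$-nonzero), each disjoint from $\mcp$ except possibly at the top nails, and apply Lemma~\ref{longnzcyclechainwalllemma} to obtain in each $S_i$ a nonzero cycle-chain of length $p-1$ whose core path runs from the top to the bottom horizontal path of $S_i$ and whose cycles avoid those two paths. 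Finally I would use the remaining horizontal and vertical paths of $W_1$ to join the two endpoints of each core path to two distinct paths of $\mcp$, turning each core path into an $A$-path and each structure into a nonzero $A$-cycle-chain.

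The last step is where the real work lies and is the step I expect to be the main obstacle. In the clique case of Lemma~\ref{oddKtApathslemma} the analogous joining is immediate because a $K_m$-submodel is already arbitrarily highly connected; for walls, Lemma~\ref{nicelinkwalllemma} only delivers attachment points on the top nails, while the core path produced by Lemma~\ref{longnzcyclechainwalllemma} starts on the top and ends on the bottom horizontal path of $S_i$, so one must route its bottom endpoint back up through $W_1$ to a free top nail --- and do this simultaneously for all $i$ with the $S_i$, the routing paths, and the linkage $\mcp$ pairwise disjoint. Arranging this against the worst-case placement of the adversarially chosen nails $n_1,\dots,n_{6k}$ (so that the subwalls $S_i$ and all the routes fit) is the delicate part, and is the reason the wall must be taken polynomially large in $k$ rather than merely of size $O(kp)$.
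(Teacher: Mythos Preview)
Your approach is correct and matches the paper's: apply Lemma~\ref{nicelinkwalllemma} with $t=6k$ and $r=108k^2p$, dispatch the separating case via the tangle exactly as you describe, and then build nonzero $A$-cycle-chains of length $p-1$ from subwalls using Lemma~\ref{longnzcyclechainwalllemma} and finish with Proposition~\ref{prop:nzcyclechain}.

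The paper handles the routing obstacle you flagged with a small trick that avoids it entirely. Instead of $k$ subwalls of height $3(p-1)$ (which would force you to route the bottom endpoint of each core path back up to the top row), it selects $2k$ paths $\mcp'=\{P_1,\dots,P_{2k}\}\subseteq\mcp$ together with $2k$ disjoint compact $\tfrac{3}{2}(p-1)\times 2$-subwalls $U_1,\dots,U_{2k}$ of $W_1$, chosen so that each $U_i$ contains the endpoint of exactly one $P_i$ (this is possible by pigeonhole since $r=108k^2p$ far exceeds the $3\cdot 6k$ vertical columns needed). Each $U_i$ yields, by Lemma~\ref{longnzcyclechainwalllemma}, a nonzero cycle-chain of length $\tfrac{p-1}{2}$ whose core path runs from the top row $R_1$ to the $\tfrac{3}{2}(p-1)$-th row $R_2$ and whose cycles avoid both rows. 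Concatenating the chains in $U_{2i-1}$ and $U_{2i}$ along $R_2$ and prepending $P_{2i-1}$, $P_{2i}$ at the top gives $k$ disjoint nonzero $A$-cycle-chains of length $p-1$; no back-routing is needed because both ends of each combined core path already lie on the top row at endpoints of $\mcp'$.
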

\begin{proof}
Let $t=6k$ and $r=108k^2p$. 
Note that $r \geq \max\{3p,3t^2\}$ and $2592k^3p = 4tr$. 
By Lemma \ref{nicelinkwalllemma}, there exists an $r$-subwall $W_1$ of $W$ such that either there exist $t$ disjoint $A$-$W_1$-paths that nicely link to $W_1$ or there exists $X \subseteq V(G)$ with $|X|<3t^2$ separating $A$ from $W_1$.

Suppose the latter case holds.
Then there exists a separation $(C,D)$ of $G$ with $X=V(C\cap D)$ such that $A\subseteq V(C)$ and $V(W_1) \subseteq V(D)$.
Since $|X|<3t^2$ and $W_1$ has size at least $3t^2$, it follows that $(C-A,D) \in \mct_{W_1} \subseteq \mct$, which implies that every zero $A$-path intersects $D-C$ by (b).
But since $A \subseteq C$, $X$ intersects every zero $A$-path, violating (a).

So there exists an $A$-$W_1$-linkage $\mcp$ of size $6k$ that nicely links to $W_1$. 
Since $|\mcp|=6k$, there exist $2k$ paths $\mcp'=\{P_1,\dots,P_{2k}\}\subseteq \mcp$ and $2k$ disjoint compact $r\times 2$-subwalls $U_1,\dots,U_{2k}$ of $W_1$ such that each $U_i$ contains the endpoint of exactly one path in $\mcp'$, say $P_i$.
Assume without loss of generality that $U_1,\dots,U_{2k}$ are positioned from left to right.
Let $R_1$ and $R_2$ denote the first and $(\frac{3}{2}(p-1)+1)$-th horizontal path of $W_1$ respectively (recall that $r\geq 3p$). 

By Lemma \ref{longnzcyclechainwalllemma}, each $U_i$ contains a nonzero cycle-chain of length $\frac{1}{2}(p-1)$ whose core path is a $R_1$-$R_2$-path and whose cycles are disjoint from $R_1\cup R_2$.
It follows that $P_{2i-1}\cup P_{2i}\cup U_{2i-1} \cup U_{2i} \cup R_2$ contains a nonzero $A$-cycle-chain of length $p-1$ for each $i \in [k]$, yielding $k$ disjoint zero $A$-paths by Proposition \ref{prop:nzcyclechain}.
\end{proof}

\begin{lemma}
[Lemma 3.5 in \cite{ThoYooa}]
\label{longnzcyclechainlinkagelemma}
Let $(W,\gamma)$ be a $\Gamma$-bipartite wall with a pure $\Gamma$-odd linkage $\mcl$ with $|\mcl|=3l$.
Then there is a nonzero cycle-chain of length $l$ contained in $P^{(h)}_1 \cup (\cup \mcl)$ whose core path is a subpath of $P^{(h)}_1$.
Moreover, if $\mcl$ is nested or crossing, then the core path intersects exactly one endpoint of each path in $\mcl$.
\end{lemma}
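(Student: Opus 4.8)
The plan is to pick out the relevant structure from the top horizontal path $P^{(h)}_1$ and a well-chosen subfamily of $l$ paths from the pure linkage $\mcl$, and to assemble them into a cycle-chain. First I would handle the three cases of purity separately, since the combinatorics of how the paths attach to $P^{(h)}_1$ differs. Since $|\mcl| = 3l$, in each case I want to extract $l$ paths whose endpoints on $P^{(h)}_1$ occur in a clean pattern: when $\mcl$ is in series I can simply take every path and the core path $P$ will be the minimal subpath of $P^{(h)}_1$ spanning all the chosen attachment intervals, with $P_i$ the $i$-th interval; when $\mcl$ is nested or crossing I pass to a subfamily of size $l$ so that consecutive chosen paths have ``interleaving'' endpoints, which is exactly the configuration needed so that the subpath $P_i$ of $P^{(h)}_1$ between the two endpoints of the $i$-th chosen path $L_i$ is disjoint from the corresponding subpaths for other indices. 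The factor of $3$ in $|\mcl| = 3l$ is what lets me thin out to such a subfamily (in the series case it is not even needed, but stating it uniformly is convenient).

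The next step is the weight bookkeeping. After possibly shifting (which is allowed since $(W,\gamma)$ is $\Gamma$-bipartite and $\Gamma$ has no element of order two, so shifting preserves cycle weights and weights of $b(W)$-paths — this is where the hypothesis on $\Gamma$ enters), every $b(W)$-path has weight $0$, so in particular the subpaths $P_i$ of $P^{(h)}_1$ all have weight $0$. On the other hand, each $L_i \in \mcl$ is $\Gamma$-odd, meaning $(W \cup L_i, \gamma)$ contains a nonzero cycle; I need to promote this to the statement that the specific cycle $P_i \cup L_i$ (or a cycle through $L_i$ and a controlled subpath of $P^{(h)}_1$) is nonzero, i.e. $\gamma(L_i) \neq \gamma(P_i) = 0$. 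This requires knowing that the nonzero cycle guaranteed by $\Gamma$-oddness can be taken to be the ``canonical'' one closing $L_i$ through the wall; since $(W,\gamma)$ is $\Gamma$-bipartite, any two $V(L_i)$-paths in $W$ joining the two endpoints of $L_i$ have the same weight (their symmetric difference is a cycle, hence zero), so $\gamma(L_i)$ together with that common weight determines whether $(W \cup L_i,\gamma)$ has a nonzero cycle, and after shifting that common weight is $0$. Hence $\gamma(L_i) \neq 0 = \gamma(P_i)$, which is precisely the $\Gamma$-nonzero condition for the cycle-chain $(P, L_1, \dots, L_l)$.

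Finally I would verify the ``moreover'' clause: in the nested or crossing case, the thinned subfamily can be chosen so that the core path $P$ is exactly the subpath of $P^{(h)}_1$ from the leftmost to the rightmost used endpoint, and each $L_i$ meets it in exactly one of its two endpoints — in the nested case one takes a ``staircase'' of pairwise nested intervals sharing no endpoint and lets $P$ run along one side; in the crossing case the endpoints already alternate, so choosing alternate paths gives the claim. The main obstacle I anticipate is the second step: making precise the passage from ``$(W \cup L_i, \gamma)$ contains some nonzero cycle'' to ``the particular cycle $P_i \cup L_i$ is nonzero,'' and simultaneously ensuring the chosen $P_i$ are pairwise disjoint subpaths of the single core path $P \subseteq P^{(h)}_1$ while still being $b(W)$-paths (so that shifting makes them weight $0$). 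Getting the subfamily selection to reconcile these two demands — disjointness of the $P_i$ on one hand, and each $L_i$ closing into a genuinely nonzero cycle through its $P_i$ on the other — is the crux; once the right $l$ paths are isolated, the rest is the Cauchy–Davenport-free part of bookkeeping and is routine.
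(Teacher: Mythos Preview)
The paper does not prove this lemma; it is cited from \cite{ThoYooa} and marked with \qed. Nonetheless, your plan has a real gap in the nested and crossing cases. You propose to thin $\mcl$ to a subfamily of size $l$ and let $P_i$ be the subpath of $P^{(h)}_1$ between the two endpoints of the $i$-th chosen path $L_i$. But any subfamily of a nested (resp.\ crossing) linkage is still nested (resp.\ crossing), and in either configuration the intervals $x_iP^{(h)}_1y_i$ are never pairwise disjoint: nested intervals form a chain under inclusion, and pairwise-crossing intervals all share the middle segment $x_{3l}P^{(h)}_1y_1$. No amount of thinning fixes this. Indeed the ``moreover'' clause already tells you that the core path meets each $L_i$ in \emph{only one} endpoint, so no individual $L_i$ is a $V(P)$-path and hence cannot serve directly as a $Q_j$ in the cycle-chain.

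What actually happens (and what you are missing) is that each $Q_j$ must be a concatenation $L_a\cup (y_aP^{(h)}_1y_b)\cup L_b$ for a suitable pair $a,b$, which is then a $V(P)$-path with both endpoints among the $x_i$'s. The cycle $P_j\cup Q_j$ has weight $\gamma(L_a)+\gamma(L_b)$ (the $P^{(h)}_1$-segments contribute $0$), and this sum can vanish even though each summand is nonzero. This is where the hypothesis that $\Gamma$ has no element of order two genuinely enters --- not in shifting, which is vacuous here since the only $g\in\Gamma$ with $2g=0$ is $g=0$. Grouping the $3l$ paths into $l$ consecutive triples $\{L_a,L_b,L_c\}$, if all three pairwise sums $\gamma(L_a)+\gamma(L_b)$, $\gamma(L_a)+\gamma(L_c)$, $\gamma(L_b)+\gamma(L_c)$ vanished one would get $2\gamma(L_a)=0$ and hence $\gamma(L_a)=0$, contradicting $\Gamma$-oddness; so at least one pair from each triple yields a nonzero cycle. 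That, not thinning, is the true source of the factor $3$.
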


\begin{lemma}
\label{bipwalloddlinkageApathslemma}
Let $(W,\gamma)$ be a $\Gamma$-bipartite wall of size at least $2664k^3p$ with a pure $\Gamma$-odd linkage $\mcl$ with $|\mcl| \geq 18kp$ such that $W$ and the paths in $\mcl$ are disjoint from $A \subseteq V(G)$ and such that $\mct_W$ is a truncation of $\mct$.
Then there exist $k$ disjoint zero $A$-paths.
\end{lemma}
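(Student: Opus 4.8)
The plan is to follow the pattern of the proofs of Lemmas~\ref{oddwallApathslemma} and~\ref{oddKtApathslemma}: either the tangle $\mct$ together with (a) and (b) yields a contradiction, or $A$ attaches suitably to $W$ and we assemble $k$ disjoint $\Gamma$-nonzero $A$-cycle-chains of length $p-1$, to which Proposition~\ref{prop:nzcyclechain} applies.

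First I would set $t := 6k$ and $r := 108k^2$, so that $r \ge t$, $r \ge 3t^2$, and $4tr = 2592k^3 \le 2664k^3p$. Applying Lemma~\ref{nicelinkwalllemma} to $W$ yields an $r$-subwall $W_1$ of $W$ such that either (i) there is $X \subseteq V(G)$ with $|X| < 3t^2 = 108k^2$ separating $A$ from $W_1$, or (ii) there is an $A$-$W_1$-linkage $\mcp$ with $|\mcp| = 6k$ that nicely links to $W_1$. In case (i), pick a separation $(C,D)$ of $G$ with $V(C \cap D) = X$, $A \subseteq V(C)$, and $V(W_1) \subseteq V(D)$; since $\mct_{W_1}$ is a truncation of $\mct_W$ and hence of $\mct$, and has order $r+1 > |X|$, we have $(C-A,D) \in \mct$, so by (b) every zero $A$-path meets $D-C$, whence, as $A \subseteq V(C)$, the set $X$ meets every zero $A$-path, contradicting (a). Thus (ii) holds.

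Next I would reconcile $\mcp$ with the pure $\Gamma$-odd linkage $\mcl$. Truncate each path of $\mcp$ at its first vertex on the boundary cycle of $W$, and then, whenever a (truncated) path of $\mcp$ still meets $\bigcup\mcl$, say first at a vertex $v$ on $Q \in \mcl$, re-extend it along $Q$ to an end of $Q$ (a top nail of $W$). A short case analysis, using that the paths of $\mcl$ are pairwise disjoint, shows that $\mcp$ remains a linkage, that each of its paths meets $V(W)$ only at one endpoint, lying on the boundary cycle of $W$, and that each path of $\mcp$ meets at most one path of $\mcl$. Deleting from $\mcl$ the at most $6k$ paths met by $\mcp$ leaves a pure $\Gamma$-odd linkage $\mcl^\circ$ of $W$ with $|\mcl^\circ| \ge 18kp - 6k \ge 6kp$ and with $\bigcup\mcl^\circ$ disjoint from $\bigcup\mcp$. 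Now choose $2k$ consecutive groups of $\tfrac32(p-1)$ paths each from $\mcl^\circ$, ordered along $P^{(h)}_1$ (possible since $6kp \ge 3k(p-1)$); for each group, Lemma~\ref{longnzcyclechainlinkagelemma} (applicable as $\mbz/p\mbz$ has no element of order two) produces a $\Gamma$-nonzero cycle-chain of length $\tfrac12(p-1)$ whose core path is a subpath of $P^{(h)}_1$, and since the groups are consecutive, in each of the three cases of a pure linkage (in series, nested, crossing) these $2k$ core paths lie on pairwise disjoint, consecutively ordered subpaths $R_1, \dots, R_{2k}$ of $P^{(h)}_1$, with the $i$-th cycle-chain meeting $V(W)$ only within $R_i$. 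Joining the $(2i-1)$-st and $(2i)$-th cycle-chains along the free stretch of $P^{(h)}_1$ between $R_{2i-1}$ and $R_{2i}$ gives, for each $i \in [k]$, a $\Gamma$-nonzero cycle-chain of length $p-1$ whose core path $\bar P_i$ is a subpath of $P^{(h)}_1$. Finally, routing the $2k$ ends of the core paths $\bar P_1, \dots, \bar P_k$ down through the cycle-chain-free interior of $W$ and along the free part of its boundary cycle, and then out along $2k$ of the $6k$ paths of $\mcp$ to $A$, extends each $\bar P_i$ to an $A$-path and yields $k$ disjoint $\Gamma$-nonzero $A$-cycle-chains of length $p-1$. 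By Proposition~\ref{prop:nzcyclechain}, $(G,\gamma)$ contains $k$ disjoint zero $A$-paths.

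The hard part is the last step: ensuring that $\mcp$, $\mcl$, the wall $W$, and the $2k$ cycle-chains remain pairwise disjoint while connecting everything to $A$. This is where the ample slack in the hypotheses is used — only $O(kp)$ of the $\ge 18kp$ paths of $\mcl$, and $2k$ of the $6k$ paths of $\mcp$, are needed — together with the rerouting of $\mcp$ onto the boundary of $W$ and onto $\mcl$, and a counting argument over the $\le 6k$ attachment vertices of $\mcp$ on the boundary of $W$; the three kinds of pure linkage are handled uniformly via consecutive groups, exactly as in the analogous arguments of \cite{BruUlm, ThoYooa}.
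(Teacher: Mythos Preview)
Your overall strategy matches the paper's: apply Lemma~\ref{nicelinkwalllemma}, reconcile the $A$-linkage $\mcp$ with the $\Gamma$-odd linkage $\mcl$, use Lemma~\ref{longnzcyclechainlinkagelemma} to build nonzero cycle-chains along $P^{(h)}_1$, then route to $A$ through the wall and finish with Proposition~\ref{prop:nzcyclechain}. However, two steps are not sound as written.

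First, your reconciliation step is not the ``short case analysis'' you claim. After truncating a path $P\in\mcp$ to the boundary of $W$ and then rerouting along some $Q\in\mcl$ from the first intersection point, several paths of $\mcp$ may meet the same $Q$, and the reroutings can collide on $Q$ or land on the same top nail; nothing prevents this. The paper handles this with a clean extremal argument: it first passes to a $36kp$-contained subwall $W_1$ so that $\mcl$ can be extended to a pure $\Gamma$-odd linkage $\mcl^1$ of $W_1$, and then chooses an $A$-$W_1$-linkage $\mcp^2$ inside $(\cup\mcp)\cup(\cup\mcl^1)$ minimizing the number of edges not in $\cup\mcl^1$. This forces every $L\in\mcl^1$ that meets $\mcp^2$ to contain an endpoint of some path in $\mcp^2$, so at most $6k$ paths of $\mcl^1$ are lost.

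Second, and more seriously, your assertion that ``the $i$-th cycle-chain meets $V(W)$ only within $R_i$'' is false when $\mcl$ is nested or crossing. By Lemma~\ref{longnzcyclechainlinkagelemma} the core path hits one endpoint of each $L_j$, say the left endpoints $x_j$, but the cycles then necessarily run through the opposite endpoints $y_j$ and the portion of $P^{(h)}_1$ between them; so each cycle-chain occupies two separated segments of $P^{(h)}_1$, not just its core segment $R_i$. Consequently the ``free stretch'' between consecutive $R_i$'s may not be free at all, and, worse, the $\mcp$-attachment points on $P^{(h)}_1$ can sit inside these $y$-segments, blocking the final routing. The paper resolves this by a pigeonhole: it partitions the top row of $W_1$ at the $6k$ attachment points of $\mcp^2$ into three segments $R_1,R_2,R_3$, observes that each path of $\mcl^2$ is disjoint from at least one $R_m$, and hence some $R_m$ is avoided by $6k(p-1)$ paths of $\mcl^2$. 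Restricting to those paths and to the $2k$ paths of $\mcp^2$ attaching in $R_m$ guarantees that the entire cycle-chain (both the $R_x$- and $R_y$-segments) is disjoint from $R_m$, so the routing from the core-path endpoints through $W_1$ to the $\mcp$-endpoints in $R_m$ goes through unambiguously. This positioning trick is the missing idea in your argument; your final paragraph gestures at a ``counting argument over the $\le 6k$ attachment vertices'' but does not supply it, and without it the nested and crossing cases do not go through.
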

\begin{proof}
Let $W'$ be a $36kp$-contained compact $2592k^3p$-subwall of $W$ and define $t=6k$ and $r=108k^2p$. 
Note that $r \geq \max\{36kp,3t^2\}$ and $2592k^3p = 4tr$.
By Lemma \ref{nicelinkwalllemma}, there exists a compact $r$-subwall $W_1$ of $W'$ such that either there exist $t$ disjoint $A$-$W_1$-paths that nicely link to $W_1$ or there exists $X \subseteq V(G)$ with $|X|<3t^2$ separating $A$ from $W_1$.
The latter case is impossible as before, so we may assume that there exists an $A$-$W_1$-linkage $\mcp^1$ with $|\mcp^1| = 6k$ that nicely links to $W_1$.

Since $W_1$ is $36kp$-contained in $W$, we may extend the endpoints of the paths of $\mcl$ through $W$ to obtain a pure linkage $\mcl^1$ of $W_1$ with $|\mcl^1|=18kp$.
Note that $\mcl^1$ is also $\Gamma$-odd.

We first modify the paths in $\mcp^1$ and $\mcl^1$ so that they become disjoint from each other, at the cost of losing a few paths in $\mcl^1$.
Let $H$ denote the union of all paths in $\mcp^1$ and in $\mcl^1$.

\begin{claim}
There is an $A$-$W_1$-linkage $\mcp^2$ in $H$ with $|\mcp^2| = 6k$ that nicely links to $W_1$ and a subset  $\mcl^2\subseteq \mcl^1$ with $|\mcl^2|=18k(p-1)$ such that each path in $\mcp^2$ is disjoint from each path in $\mcl^2$.
\end{claim} 
\begin{subproof}
Let $\mcp^2$ be an $A$-$W_1$-linkage in $H$ of size $6k$ that nicely links to $W_1$ minimizing the number of edges not in a path in $\mcl^1$.
Suppose $L \in \mcl^1$ intersects a path in $\mcp^2$. 
Let $x$ be an endpoint of $L$ and let $y$ be the closest vertex to $x$ on $L$ such that $y$ is in some path in $\mcp^2$, say $P'$.
If $P'$ does not have an endpoint in $V(L)$, then rerouting $P'$ through $L$ to $x$, we obtain another $A$-$W_1$-linkage in $H$ of size $6k$ that nicely links to $W_1$ using strictly fewer edges not in a path in $\mcl^1$, a contradiction.
Therefore, every path $L\in\mcl^1$ intersecting a path in $\mcp^2$ contains an endpoint of a path in $\mcp^2$, and the number of such paths in $\mcl^1$ is at most $|\mcp^2|=6k$.
We may then take a subset $\mcl^2 \subseteq \mcl^1$ with $|\mcl^2| = 18k(p-1) \leq 18kp-6k$ excluding the paths that intersect $\mcp^2$.
\end{subproof}

Let $R$ denote the top row (first horizontal path) of $W_1$ and let $v_1,\dots,v_{6k}$ be the top nails, from left to right, that are endpoints of a path in $\mcp^2$.
Let $R_1=v_1Rv_{2k}, R_2=v_{2k+1}Rv_{4k}$, and $R_3=v_{4k+1}Rv_{6k}$.
Then each path in $\mcl^2$ is disjoint from at least one of $R_1, R_2$, or $R_3$, so there exists $m \in \{1,2,3\}$ such that there are $6k(p-1)$ paths in $\mcl^2$ that are disjoint from $R_m$.
We fix such $m \in \{1,2,3\}$.

Let $\mcp^3=\{P_1,\dots,P_{2k}\}$ be the set of $2k$ paths in $\mcp^2$ containing an endpoint in $R_m$.
We relabel the vertices $v_1,\dots,v_{6k}$ so that $v_i$ is an endpoint of $P_i$ for $i \in [2k]$ and $v_i$ is to the left of $v_j$ for $i<j$.

Let $\mcl^3=\{L_1,\dots,L_{6k(p-1)}\}$ be a set of $6k(p-1)$ paths in $\mcl^2$ disjoint from $R_m$. 
For $i \in [6k(p-1)]$, let $x_i$ and $y_i$ denote the left and right endpoint of $L_i$ on $R$ respectively.
Assume without loss of generality that $x_i$ is to the left of $x_j$ for $i<j$.
Then, up to reorientation of the wall, we may assume that the following hold:
\begin{enumerate}
	\item
	If $\mcl^3$ is in series, then there is a subpath $R'$ of $R$ containing $\{x_1,y_1,\dots,x_{k(p-1)},y_{k(p-1)}\}$ such that $R'$ is disjoint from $R_m$. 
	\item 
	If $\mcl^3$ is crossing or nested, then the two (disjoint) subpaths $R_x=x_1Rx_{3k(p-1)}$ and $R_y = y_1Ry_{3k(p-1)}$ are disjoint from $R_m$. 
\end{enumerate}

First suppose $\mcl^3$ is in series.
Then $R' \cup \{L_1,\dots,L_{k(p-1)}\}$ is a nonzero cycle-chain of length $k(p-1)$ which can be partitioned into $k$ disjoint nonzero cycle-chains, each of length $p-1$, and each of whose core paths is a subpath of $R'$.
By linking the endpoints of the $k$ cycle-chains to the endpoints of $\mcp^3$ through $W_1$, we obtain $k$ disjoint nonzero $A$-cycle-chains each of length $p-1$ and hence $k$ disjoint zero $A$-paths by Proposition \ref{prop:nzcyclechain}.

Let us assume now that $\mcl^3$ is crossing or nested.
By Lemma \ref{longnzcyclechainlinkagelemma}, there is a nonzero cycle-chain of length $k(p-1)$ contained in $R \cup \{L_1,\dots,L_{3k(p-1)}\}$ whose core path is $R_x$ or $R_y$. Say $R_x$.
Again we partition into $k$ disjoint nonzero cycle-chains, each of length $p-1$, and each of whose core paths is a subpath of $R_x$. 
Linking the endpoints to $\mcp^3$ through $W_1$, we obtain $k$ disjoint nonzero $A$-cycle-chains each of length $p-1$ and hence $k$ disjoint zero $A$-paths by Proposition \ref{prop:nzcyclechain}.
\end{proof}

\subsection{$\Gamma$-Bipartite 3-block}
\label{sec:bip3block}
In this subsection we deal with outcome (3) of Theorem \ref{flatwallundirectedtheorem}.

Let $\Gamma$ be an abelian group, let $(G,\gamma)$ be a $\Gamma$-labelled graph with a vertex subset $A$, and let $(B,\gamma)$ be a $\Gamma$-bipartite 3-block of $(G-A,\gamma)$.
Since $(B,\gamma)$ is $\Gamma$-bipartite and $B$ is 3-connected, we may assume, after possibly shifting at vertices in $V(B)$, that every $V(B)$-path in $(G-A,\gamma)$ has weight 0 by Lemma \ref{lemma3connshiftequivalent}. 
Note that such shifting operations do not change the weights of $A$-paths.
Now the weight of an $A$-path containing vertices in $V(B)$ is determined only by its two $A$-$V(B)$-subpaths.

The goal is to find two large $A$-$V(B)$-linkages $\mcp$ and $\mcq$ such that every path in $\mcp$ has weight $\ell$ and every path in $\mcq$ has weight $-\ell$ for some $\ell \in \Gamma$, and such that their ends in $V(B)$ can be linked in $(B,\gamma)$ to obtain many disjoint zero $A$-paths.
The main obstacle is that the edges of the 3-block $(B,\gamma)$ are not necessarily edges in the original graph; rather, they are ``virtual'' edges representing $V(B)$-bridges of $(G-A,\gamma)$ with two attachments.
Since the vertices in $A$ may be adjacent to vertices in $V(B)$-bridges not necessarily in $V(B)$, it is not immediately clear how the ends of the paths in $\mcp$ and $\mcq$ can be linked to yield the desired zero $A$-paths.
To aid with this step, we use a large wall $W$ in $(B,\gamma)$ as an intermediary structure and find two linkages $\mcp$ and $\mcq$ whose paths have weights $\ell$ and $-\ell$ respectively such that $\mcp$ and $\mcq$ nicely link to a subwall of $W$.

Now to find the linkages $\mcp$ and $\mcq$, we consider an approximate version of Menger's theorem for paths of weight $\ell$. Given two disjoint vertex sets $A$ and $U$ of a $\Gamma$-labelled graph, can we find either many disjoint $A$-$U$-paths of weight $\ell$ or a small vertex set hitting all such paths?
This is false in general as easily seen from the constructions in Figure \ref{FigApathsCTEX} (b) and (c).
Nonetheless, we show in Lemma \ref{lemmamengerellpaths} that this is true under the additional assumption that $U$ is contained in a 3-block $(B,\gamma)$ of $(G-A,\gamma)$ for which every $V(B)$-path in $(G-A,\gamma)$ has weight 0.
This is used in Lemma \ref{lemmawallnicelink} to find paths of weight $\ell$ that nicely link to some large subwall of a given wall in such a 3-block $(B,\gamma)$.
In Lemma \ref{bip3blockApathslemma}, we repeatedly use Lemma \ref{lemmawallnicelink} to apply the strategy discussed above.
We remark that the results in this subsection apply to all abelian groups $\Gamma$, and that Lemmas \ref{lemmamengerellpaths} and \ref{lemmawallnicelink} do not assume the existence of a large tangle.

Given a 3-block $(B,\gamma)$ of a $\Gamma$-labelled graph $(G,\gamma)$ and a subset $U\subseteq V(B)$, the \emph{initial segment} of an $A$-$U$-path is its $A$-$V(B)$-subpath, and the \emph{end segments} of an $A$-path going through $V(B)$ are its two $A$-$V(B)$-subpaths.

\begin{lemma} \label{lemmamengerellpaths}
Let $\Gamma$ be an abelian group with $\ell \in \Gamma$, let $t$ be a positive integer, let $(G,\gamma)$ be a $\Gamma$-labelled graph with $A \subseteq V(G)$, and let $(B,\gamma)$ be a 3-block of $(G-A,\gamma)$ such that every $V(B)$-path in $(G-A,\gamma)$ has weight 0.
Let $U \subseteq V(B)$.
If there does not exist $X \subseteq V(G)$ with $|X|<12t$ intersecting all $A$-$U$-paths of weight $\ell$, then there exist $t$ disjoint $A$-$U$-paths of weight $\ell$ in $(G,\gamma)$.
\end{lemma}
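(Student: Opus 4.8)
\textbf{Proof strategy for Lemma \ref{lemmamengerellpaths}.}

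The plan is to set up a min-max/augmenting-path argument in the spirit of Menger's theorem, but carried out for $A$-$U$-paths of weight exactly $\ell$, exploiting the $\Gamma$-bipartiteness of $(B,\gamma)$ to control weights. By Lemma \ref{lemma3connshiftequivalent} (applied to the $3$-connected graph underlying the $3$-block, as noted in the remark following that lemma), we may first perform a sequence of shifting operations on $(G,\gamma)$ so that every $B$-path of $G-A$ has weight $0$; shifting changes the weights of $A$-$U$-paths only by a fixed constant determined by the endpoints, so after a further harmless adjustment we may assume every $B$-path has weight $0$ and still track the weight of an $A$-$U$-path through its initial segment together with a ``tail'' inside $B$ of weight $0$. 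The upshot is that the weight of an $A$-$U$-path is essentially determined by its initial segment (its $A$-$V(B)$-subpath) plus the net contribution of traversing edges of the $3$-block, which is $0$; so weight-$\ell$ $A$-$U$-paths are governed by weight-$\ell$ $A$-$V(B)$-paths together with connectivity inside $B$.

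Next I would take a maximum collection $\mcp$ of disjoint $A$-$U$-paths of weight $\ell$ and argue by contradiction: suppose $|\mcp| = s < t$ but no set $X$ with $|X| < 12t$ meets all such paths. The key step is an \emph{augmenting structure}: since $\mcp$ is not maximum and the vertex set $V(\mcp)$ (of size bounded linearly in $s$) does not by itself form a cover, there must be a weight-$\ell$ $A$-$U$-path $R$ avoiding most of $\mcp$; I would then use the high connectivity guaranteed by the $3$-block (no two vertices separate vertices of $B$) to reroute the paths of $\mcp$ together with $R$ to produce $s+1$ disjoint weight-$\ell$ $A$-$U$-paths. The $\Gamma$-bipartiteness is exactly what makes rerouting weight-preserving: any detour through $B$ has weight $0$, so swapping portions of paths inside $B$ does not change the total weight. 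Concretely, one finds three internally disjoint paths from the ``new'' endpoint region to $V(\mcp) \cap V(B)$ (using $3$-connectivity of the $3$-block), and a counting argument (three choices, at most two blocked by any bounded-size obstruction) lets one complete the augmentation or else extract the desired small cover of size $<12t$. The constant $12$ comes from tracking the three-internally-disjoint-paths argument across the at most $s < t$ existing paths, each of which contributes a bounded number of ``blocking'' vertices (its endpoints in $A$ and $U$, its entry/exit vertices of $B$, etc.).

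I expect the main obstacle to be making the rerouting inside the $3$-block precise while preserving both disjointness and weight simultaneously, and in particular handling the interaction between the initial segments (which lie outside $B$ and carry the ``interesting'' weight) and the portions inside $B$ (which must be recombined freely). A subtlety is that $G[B]$ may have no edges, so all movement inside the $3$-block happens along $B$-bridges, and one must argue about $B$-bridges and their attachments rather than edges; the $3$-block structure, together with the fact that each $B$-bridge has at most two attachments, is what keeps the bookkeeping finite. The role of $U \subseteq V(B)$ (rather than an arbitrary target set) is essential here: it guarantees the ``far'' endpoints of our paths lie in the highly connected part, so Menger-type rerouting is available on that side. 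If a direct augmenting-path argument proves too delicate, an alternative is to model weight-$\ell$ $A$-$U$-paths as nonzero $A$-paths in an auxiliary directed group-labelled graph (as in Corollary \ref{cor:ABApaths}) and invoke Theorem \ref{thm:nzApathDir}, but the constant $12t$ and the exactness of the weight suggest the hands-on rerouting argument is the intended route.
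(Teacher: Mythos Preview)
Your proposal has a genuine gap: it does not identify, let alone handle, the central obstruction in this lemma. You correctly observe that $\Gamma$-bipartiteness of $(B,\gamma)$ makes detours \emph{inside} $B$ weight-neutral, but the difficulty is not rerouting inside $B$; it is that an $A$-$V(B)$ initial segment of weight $\ell$ ending at some $b_1\in V(B)$ may live entirely inside a single $B$-bridge with attachments $\{b_1,b_2\}$, and then any extension of that segment to a path in $G$ is \emph{forced} to pass through $b_2$ next. Thus a weight-$\ell$ initial segment to $b_1$ together with connectivity from $b_1$ to $U$ inside $B$ does \emph{not} in general yield a weight-$\ell$ $A$-$U$-path in $(G,\gamma)$. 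Your augmenting scheme (``find $R$, reroute through three internally disjoint paths'') never confronts this; nor does the claim that ``$V(\mcp)$ has size bounded linearly in $s$'' make sense, since the paths can be arbitrarily long.

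The paper's proof is structured precisely around this obstruction. It builds an auxiliary unlabelled graph $B_\ell$ on $V(B)\cup A$ (adding an edge $ab$ whenever an $A$-$V(B)$-path of weight $\ell$ from $a$ to $b$ exists), then distinguishes \emph{proper} $A$-$U$-paths in $B_\ell$ (those that can be realised as weight-$\ell$ paths in $(G,\gamma)$) from \emph{improper} ones (those whose first two $B$-vertices $b_1,b_2$ witness the bridge obstruction above). One takes a maximum linkage $\mcp$ of proper paths; if $|\mcp|\ge 2t$ we are done (the factor $2$ absorbs shared initial segments). Otherwise one collects the set $F_\mcp$ of first attachments of improper paths and applies Corollary~\ref{cor:ABApaths} to $U$-$F_\mcp$-$U$-paths in $B_\ell-A$: either there are $4t$ such paths, and a minimality (fewest-edges-outside-$\mcq$) argument contradicts maximality of $\mcp$; or there is a hitting set $Y$ with $|Y|<8t$, after which a cut-edge analysis and ordinary Menger produce $Z$ with $|Z|<2t$ so that $A(\mcp)\cup Y\cup Z$ covers all weight-$\ell$ $A$-$U$-paths. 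The constant $12t$ is exactly $2t+8t+2t$ from these three pieces, not from any three-paths/3-connectivity count. Ironically, the ``alternative'' you set aside---invoking Theorem~\ref{thm:nzApathDir} via an auxiliary construction---is essentially what the paper does (through Corollary~\ref{cor:ABApaths}), though applied to $U$-$F_\mcp$-$U$-paths rather than directly to the weight-$\ell$ $A$-$U$-paths.
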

\begin{proof}
Let $B_\ell$ be the unlabelled graph obtained from (the graph) $B$ by adding the vertex set $A$ and, for each $a \in A$ and $b \in V(B)$, adding an edge $ab$ if there is an $A$-$V(B)$-path of weight $\ell$ with endpoints $a$ and $b$ in $(G,\gamma)$.
Then, since every $V(B)$-path in $(G-A,\gamma)$ has weight 0, for each $A$-$U$-path of weight $\ell$ in $(G,\gamma)$ there is a corresponding $A$-$U$-path in $B_\ell$ with the same endpoints and same sequence of vertices in $V(B)$.

The converse does not necessarily hold:
Let $b_1b_2 \in E(B)$, $a \in A$, and suppose $P$ is an $A$-$U$-path in $B_\ell$ with $a,b_1,b_2$ as its first three vertices in this order. Let $R$ denote the union of all $V(B)$-bridges of $G-A$ whose sets of attachments in $B$ are equal to $\{b_1,b_2\}$. If
\begin{itemize}
	\item there does not exist an $a$-$b_2$-path of weight $\ell$ going through $b_1$ in $(G[R+a],\gamma)$, and
 	\item there does not exist an $a$-$b_1$-path with weight $\ell$ in $(G,\gamma)$ that is internally disjoint from $A\cup V(B) \cup V(R)$, 
\end{itemize} 
then there does not exist a corresponding $A$-$U$-path of weight $\ell$ in $(G,\gamma)$ with the same endpoints and same sequence of vertices in $V(B)$ as $P$.
See Figure \ref{figImproperpath}.
In this case, let us call the $A$-$U$-path $P$ in $B_\ell$ \emph{improper}.
Otherwise, there clearly exists a corresponding $A$-$U$-path of weight $\ell$ in $(G,\gamma)$ and we call $P$ \emph{proper}.
If $a \in A, b_1\in U$, and $ab_1 \in E(B_\ell)$, then we also call the path $ab_1$ proper and it (by the definition of $B_\ell$) also has a corresponding $A$-$U$-path of weight $\ell$ in $(G,\gamma)$.
In all cases, we call $b_1$ the \emph{first attachment} and $b_2$, if it exists, the \emph{second attachment} of $P$.

\begin{figure}
\centering

{
\begingroup%
  \makeatletter%
  \providecommand\color[2][]{%
    \errmessage{(Inkscape) Color is used for the text in Inkscape, but the package 'color.sty' is not loaded}%
    \renewcommand\color[2][]{}%
  }%
  \providecommand\transparent[1]{%
    \errmessage{(Inkscape) Transparency is used (non-zero) for the text in Inkscape, but the package 'transparent.sty' is not loaded}%
    \renewcommand\transparent[1]{}%
  }%
  \providecommand\rotatebox[2]{#2}%
  \newcommand*\fsize{\dimexpr\f@size pt\relax}%
  \newcommand*\lineheight[1]{\fontsize{\fsize}{#1\fsize}\selectfont}%
  \ifx\svgwidth\undefined%
    \setlength{\unitlength}{165.74544023bp}%
    \ifx\svgscale\undefined%
      \relax%
    \else%
      \setlength{\unitlength}{\unitlength * \real{\svgscale}}%
    \fi%
  \else%
    \setlength{\unitlength}{\svgwidth}%
  \fi%
  \global\let\svgwidth\undefined%
  \global\let\svgscale\undefined%
  \makeatother%
  \begin{picture}(1,0.63592798)%
    \lineheight{1}%
    \setlength\tabcolsep{0pt}%
    \put(0,0){\includegraphics[width=\unitlength,page=1]{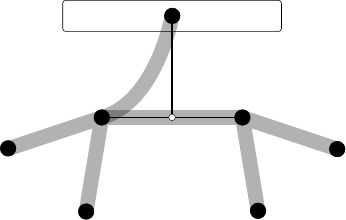}}%
    \put(0.11831608,0.5717517){\makebox(0,0)[lt]{\lineheight{1.25}\smash{\begin{tabular}[t]{l}$A$\\\end{tabular}}}}%
    \put(0.43054196,0.58080173){\makebox(0,0)[lt]{\lineheight{1.25}\smash{\begin{tabular}[t]{l}$a$\end{tabular}}}}%
    \put(0.26764153,0.34097616){\makebox(0,0)[lt]{\lineheight{1.25}\smash{\begin{tabular}[t]{l}$b_1$\end{tabular}}}}%
    \put(0.68846766,0.34097616){\makebox(0,0)[lt]{\lineheight{1.25}\smash{\begin{tabular}[t]{l}$b_2$\end{tabular}}}}%
    \put(0.38672355,0.24030664){\makebox(0,0)[lt]{\lineheight{1.25}\smash{\begin{tabular}[t]{l}\footnotesize{$\ell$}\end{tabular}}}}%
    \put(0.55724214,0.24030667){\makebox(0,0)[lt]{\lineheight{1.25}\smash{\begin{tabular}[t]{l}\footnotesize{$-\ell$}\end{tabular}}}}%
    \put(0.51199225,0.42695132){\makebox(0,0)[lt]{\lineheight{1.25}\smash{\begin{tabular}[t]{l}\footnotesize{$0$}\end{tabular}}}}%
    \put(0,0){\includegraphics[width=\unitlength,page=2]{figImproperpath.pdf}}%
  \end{picture}%
\endgroup%

}
\caption{The black filled vertices are in $B_\ell$ and the highlighted curves represent edges of $B_\ell$. 
If $a$ is not adjacent to another $V(B)$-bridge of $G-A$ attaching to $b_1$, then an $A$-$U$-path in $B_\ell$ starting with the vertices $a,b_1,b_2$ does not have a corresponding path of weight $\ell$ in $(G,\gamma)$, and is improper. }
\label{figImproperpath}
\end{figure}

Note that the definition of proper and improper $A$-$U$-paths in $B_\ell$ depend only on their first and second attachments.
Moreover, if two $A$-$U$-paths in $B_\ell$ have the same endpoint in $A$ and same first attachment but distinct second attachments, then at least one of the two paths is proper.
These facts will be used to find a large linkage of proper $A$-$U$-paths in $B_\ell$.

Given a linkage $\mcp$ of proper $A$-$U$-paths in $B_\ell$, let $A(\mcp)$ and $U(\mcp)$ denote the sets of vertices in $A$ and $U$ respectively that are endpoints of a path in $\mcp$, and let $F_\mcp$ denote the set of vertices in $V(B)$ that are adjacent to $A-A(\mcp)$ in $B_\ell$.
Observe that if $Q$ is a $U$-$F_\mcp$-$U$-path and $a\in A-A(\mcp)$ is adjacent to a vertex $b\in V(Q)\cap F_\mcp$, then $Q+a+ab$ contains a proper $A$-$U$-path.

Now let $\mcp$ be a maximum cardinality linkage of proper $A$-$U$-paths in $B_\ell$.
First suppose that $|\mcp|\geq 2t$, and consider a linkage $\mcp'$ with $|\mcp'|=|\mcp|$ consisting of $A$-$U$-paths of weight $\ell$ in $(G,\gamma)$ corresponding to the paths in $\mcp$. Then the paths in $\mcp'$ are disjoint except possibly in their initial segments, and since the initial segment of such a path can intersect at most one other, it follows that there exist $t$ disjoint $A$-$U$-paths of weight $\ell$ in $(G,\gamma)$.
Thus we may assume that $|\mcp|<2t$.

By Corollary \ref{cor:ABApaths}, either there are $4t$ disjoint $U$-$F_\mcp$-$U$-paths in $B_\ell-A$ or there exists $Y \subseteq V(B_\ell)-A$ with $|Y|<8t$ such that $B_\ell-A-Y$ does not contain a $U$-$F_\mcp$-$U$-path.
In the first case, we show that the $U$-$F_\mcp$-$U$-paths can be used to modify and extend $\mcp$ to a larger linkage of proper $A$-$U$-paths, contradicting the maximality of $\mcp$.
In the second case, we will see that the structure of $B_\ell-Y$ allows us to reduce the problem to Menger's theorem.

\begin{description}
\item[Case 1:]
There exist $4t$ disjoint $U$-$F_\mcp$-$U$-paths in $B_\ell-A$.

Choose linkages $\mcp$ and $\mcq$ such that
\begin{enumerate}
	\item[(i)] 
	$\mcp$ is a maximum cardinality linkage of proper $A$-$U$-paths in $B_\ell$,
	\item[(ii)]
	$\mcq $ is a linkage of $4t$ $U$-$F_\mcp$-$U$-paths in $B_\ell-A$, and
	\item[(iii)]
	subject to (i) and (ii), the number of edges in $(\cup \mcp)\cup(\cup \mcq)$ is minimum.
\end{enumerate}

First suppose there exists $Q \in \mcq$ that is disjoint from $\cup\mcp$.
Let $b \in V(Q)\cap F_\mcp$ and let $a \in A-A(\mcp)$ be adjacent to $b$ in $B_\ell$.
Then $Q + a + ab$ contains a proper $A$-$U$-path by the definition of improper paths, and moreover this path is disjoint from $\cup\mcp$ since $Q$ is disjoint from $\cup \mcp$.
This contradicts the maximality of $\mcp$ and we may thus assume that every path in $\mcq$ intersects a path in $\mcp$.

Since $|\mcp|<2t$ and $|\mcq|=4t$, we can choose a subset $\mcq'=\{Q_1',\dots,Q_{2t}'\}$ of $\mcq$ with $|\mcq'|=2t$ such that no path in $\mcq'$ contains a vertex in $U(\mcp)$.
For each $i \in [2t]$, let $u_i$ be an endpoint of $Q_i'$ and let $z_i$ be the closest vertex to $u_i$ on $Q_i'$ that is contained in a path in $\mcp$.

\begin{figure}
\centering

{
\def\svgwidth{0.5\textwidth}
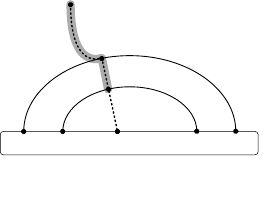
\vspace{-10mm}
}
\caption{Lemma \ref{lemmamengerellpaths}, Case 1. The highlighted path is a proper $A$-$U$-path using fewer edges not in $\cup\mcq$, contradicting the choice of $\mcp$ and $\mcq$.}
\label{figUFUpath}
\end{figure}

Since $|\mcp|<2t$ and $|\mcq'|=2t$, there exist distinct $i,j \in [2t]$ such that $z_{i},z_{j}\in V(P)$ for some $P \in \mcp$.
Let $a$ and $u$ denote the two endpoints of $P$ in $A$ and $U$ respectively, and assume without loss of generality that $a,z_{i},z_{j},u$ occur in this order on $P$.
Then $P':=aPz_{j}Q_{j}'u_{j}$ is a proper $A$-$U$-path since $P$ and $P'$ have the same first and second attachments and $P$ is proper. 
See Figure \ref{figUFUpath}.
Moreover, $P'$ is disjoint from each path in $\mcp-P$ by our choice of $z_{j}$ and, since $Q_{j}'$ does not contain a vertex in $U(\mcp)$, $P'$ uses strictly fewer edges that are not in $\cup \mcq$ than $P$.

Since $\mcp$ is a maximum linkage of proper $A$-$U$-paths in $B_\ell$, so is $\mcp' := \mcp-P+P'$. Since $A(\mcp)=A(\mcp')$ (hence $F_\mcp = F_{\mcp'}$), $\mcp'$ and $\mcq$ also satisfy (i) and (ii).
But $(\cup \mcp') \cup (\cup \mcq)$ has fewer edges than $(\cup \mcp)\cup (\cup \mcq)$, contradicting (iii).

\item[Case 2:]
There exists $Y \subseteq V(B_\ell)-A$ with $|Y| < 8t$ such that $B_\ell - A - Y$ does not contain a $U$-$F_\mcp$-$U$-path.

Let $H$ denote the graph $B_\ell-A-Y$.
If $b_1 \in F_\mcp - Y$, then $H$ does not contain two $b_1$-$U$-paths whose only common vertex is $b_1$, since otherwise their union would be a $U$-$F_\mcp$-$U$-path in $H$.
So for each $b_1 \in F_\mcp-Y$, there exists a 1-separation $(C_{b_1},D_{b_1})$ in $H$ such that $b_1 \in V(C_{b_1}-D_{b_1})$ and $U-Y \subseteq V(D_{b_1})$.

\begin{claim} \label{claimcutedge}
Let $a \in A-A(\mcp)$, $b_1 \in F_\mcp-Y$, and suppose that there exists an improper $A$-$U$-path $Q$ contained in $H+a+ab_1$.
Let $b_2$ denote the second attachment of $Q$.
Then there exists a proper $A$-$U$-path in $H+a+ab_1$ if and only if $b_1b_2$ is not a cut-edge in $H$ separating $b_1$ from $U-Y$.
\end{claim}
\begin{subproof}
If $b_1b_2$ is a cut-edge in $H$ separating $b_1$ from $U-Y$, then any $a$-$U$-path in $H+a+ab_1$ must start with the vertices $a,b_1,b_2$ in this order, and any such path is improper since it has the same first and second attachments as $Q$.
Conversely, if $b_1b_2$ is not such a cut-edge, then there exists a $b_1$-$U$-path in $H$ avoiding the edge $b_1b_2$.
Combining this path with the edge $ab_1$ gives a proper $A$-$U$-path in $H+a+ab_1$.
See Figure \ref{figCutedge}.
\end{subproof}

Let $H'$ be the graph obtained from $B_\ell-A(\mcp)-Y$ by deleting the edge $ab_1$ for each $a \in A-A(\mcp)$ and $b_1 \in F_\mcp$ such that:
\begin{quote}
There is an improper $A$-$U$-path in $H+a+ab_1$ with second attachment $b_2$ such that $b_1b_2$ is a cut-edge in $H$ separating $b_1$ from $U-Y$.
\end{quote}

\begin{figure}
\centering

{
\small
\def\svgwidth{0.45\textwidth}
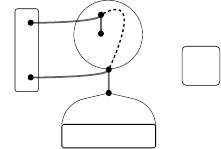

}
\caption{Lemma \ref{lemmamengerellpaths}, Case 2. If there is an improper $A$-$U$-path using $a,b_1,b_2$ such that $b_1b_2$ is a cut-edge, then the edge $ab_1$ is deleted in $H'$. Otherwise, as with $a',b_1',b_2'$ in the figure, there is a proper $A$-$U$-path obtained by rerouting within $C$, and the edge $a'b_1'$ remains in $H'$.}
\label{figCutedge}
\end{figure}

We now show that the problem reduces to Menger's theorem on $H'$.

\begin{claim}
Let $\mcq'=\{Q_1',\dots,Q_k'\}$ be a linkage of $A$-$U$-paths in $H'$. 
Then there is a linkage $\mcq=\{Q_1,\dots,Q_k\}$ of proper $A$-$U$-paths in $B_\ell-A(\mcp)-Y$ such that $Q_i'$ and $Q_i$ have the same endpoints for all $i \in [k]$.
\end{claim}

\begin{subproof}
Let $a^1,\dots,a^k$ denote the endpoints of $Q'_1,\dots,Q'_k$ respectively in $A-A(\mcp)$.
For each $i\in [k]$, define an $A$-$U$-path $Q_i$ in $B_\ell-A(\mcp)-Y$ as follows.
If $Q'_i$ is proper, then set $Q_i=Q_i'$.
If $Q_i'$ is improper, let $b^i_1$ and $b^i_2$ denote the first and second attachments of $Q_i'$ respectively and let $z^i$ denote the unique vertex in $V(C_{b^i_1}\cap D_{b^i_1})$. 
Then $z^i$ is a cut-vertex separating $b^i_1$ from $U-Y$ in $H$, so $z^i \in V(Q_i')$.
Also, $b^i_1b^i_2$ is not a cut-edge in $H$ separating $b^i_1$ from $U-Y$ since otherwise the edge $a^ib^i_1$ would have been deleted in $H'$.

It follows that there exists a $b^i_1$-$z^i$-path $R_i$ in $C_{b^i_1}$ avoiding the edge $b^i_1b^i_2$.
Define $Q_i$ to be the $A$-$U$-path $a^ib^i_1R_iz^iQ_i'$.
Then $Q_i$ is proper, contained in $B_\ell-A(\mcp)-Y$, and $Q_i$ has the same endpoints as $Q_i'$.
Moreover, since $Q_i$ was obtained by modifying $Q_i'$ only inside $C_{b^i_1}$ (which is separated by the cut-vertex $z^i$ from $U$ in $H$), it follows that $\mcq:=\{Q_1,\dots,Q_k\}$ is a linkage of proper $A$-$U$-paths in $B_\ell-A(\mcp)-Y$.
\end{subproof}

If $H'$ contains $2t$ disjoint $A$-$U$-paths, then we obtain $2t$ disjoint proper $A$-$U$-paths in $B_\ell-A(\mcp)-Y$ (hence in $B_\ell$), contradicting the assumption that $\mcp$ is a maximum such linkage.
So we may assume by Menger's theorem that there exists $Z \subseteq V(H')$ with $|Z|<2t$ such that $H'-Z$ does not contain an $A$-$U$-path.

\begin{claim}
$(G-A(\mcp)-Y-Z,\gamma)$ does not contain an $A$-$U$-path of weight $\ell$.
\end{claim} 
\begin{subproof}
Suppose $Q$ is an $A$-$U$-path of weight $\ell$ in $(G-A(\mcp)-Y-Z,\gamma)$, and let $a$ denote the endpoint of $Q$ in $A-A(\mcp)$.
Then $Q$ corresponds to a proper $A$-$U$-path $Q'$ in $B_\ell-A(\mcp)-Y-Z$ with the same endpoints and same sequence of vertices in $V(B)$.
Let $b_1$ denote the vertex succeeding $a$ in $Q'$.

Since $H'-Z$ does not contain an $A$-$U$-path, the edge $ab_1$ must have been deleted in $H'$.
By the definition of $H'$, there exists an improper $A$-$U$-path in $H+a+ab_1$ with second attachment say $b_2$ such that $b_1b_2$ is a cut-edge in $H$ separating $b_1$ from $U-Y$.
But by Claim \ref{claimcutedge}, there does not exist a proper $A$-$U$-path in $B_\ell-A(\mcp)-Y$ starting with the edge $ab_1$, contradicting the existence of $Q'$.
\end{subproof}

Now $X:=A(\mcp)\cup Y \cup Z \subseteq V(G)$ is a hitting set for $A$-$U$-paths of weight $\ell$ in $(G,\gamma)$ with $|X| \leq |\mcp| + |Y|+|Z| < 2t+8t + 2t = 12t$.
This completes the proof of the lemma. \qedhere
\end{description}
\end{proof}

We next prove a generalization of Lemma \ref{nicelinkwalllemma} for paths of weight $\ell$ that nicely link to a wall in a $\Gamma$-bipartite 3-block.
Roughly, we will apply Lemma \ref{lemmamengerellpaths} to obtain many paths of weight $\ell$ from $A$ to the branch vertices of the wall.
Then we find a large subwall disjoint from these paths and show that at least one of these paths can be extended to the top nails of the subwall avoiding a given small set of vertices. 
This allows us to conclude (using Lemma \ref{lemmamengerellpaths} again) that there are many paths of weight $\ell$ that nicely link to the subwall.

\begin{lemma} \label{lemmawallnicelink}
Let $\Gamma$ be an abelian group with $\ell \in \Gamma$, let $r,t$ be positive integers with $r\geq 12t$, and let $T=3(36t)^2$.
Let $(G,\gamma)$ be a $\Gamma$-labelled graph with $A \subseteq V(G)$ and let $(B,\gamma)$ be a 3-block of $(G-A,\gamma)$ such that every $V(B)$-path in $(G-A,\gamma)$ has weight 0.
Let $W$ be an $s$-wall in $G-A$ where $s \geq (2r+1)(2T+1)$ such that $W$ is 1-contained in a wall $W'$ and $b(W) \subseteq V(B)$ where $b(W)$ is the set of branch vertices of $W$ with respect to $W'$. Suppose in addition that there does not exist $X \subseteq V(G)$ with $|X|<12T$ intersecting all $A$-$b(W)$-paths of weight $\ell$ in $(G,\gamma)$.
Then $W$ contains a compact $r$-subwall $W_1$ such that there are $t$ disjoint $A$-$W_1$-paths of weight $\ell$ that nicely link to $W_1$.
\end{lemma}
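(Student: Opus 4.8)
The strategy is to feed the weighted Menger theorem (Lemma~\ref{lemmamengerellpaths}) into the wall-routing argument of Lemma~\ref{nicelinkwalllemma}; the new ingredient is that, after a harmless shifting, paths running inside $W$ carry weight $0$ and so can be rerouted freely without disturbing the weight of any $A$-path through them.

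First I would reduce to the situation where every path of $W$ between two branch vertices has weight $0$. Each subdivided edge of $W$ joins two vertices of $b(W)\subseteq V(B)$ and hence is a $B$-path of $G-A$ (or a concatenation of several, should an interior vertex lie in $V(B)$); thus every cycle of $W$ corresponds to a closed walk in the $3$-block $B$, and since $(B,\gamma)$ is $\Gamma$-bipartite, $\gamma$ kills the cycle space of $B$, so $\gamma(C)=0$ for every cycle $C$ of $W$. In particular any two paths of $W$ with the same endpoints have equal weight. By the remark following Lemma~\ref{lemma3connshiftequivalent}, applied to the $\Gamma$-bipartite $3$-block $B$ of $G-A$, a sequence of shifting operations — each centred at a vertex of $V(B)$, hence outside $A$, hence altering the weight of no $A$-path — makes every $B$-path of $G-A$, and therefore every path of $W$ between branch vertices, have weight $0$. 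None of the hypotheses of the lemma is affected (the family of weight-$\ell$ $A$-$b(W)$-paths is literally unchanged), so we may assume this from now on. As a consequence, if $P$ is a weight-$\ell$ $A$-path, any maximal subpath of $P$ contained in $W$ may be replaced by another path of $W$ with the same endpoints, and if $P$ ends at a branch vertex of $W$ then $P$ may be extended along any path of $W$ to a different branch vertex; in either case the weight remains $\ell$.

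Next I would apply Lemma~\ref{lemmamengerellpaths} with $U=b(W)$ and with $T$ in place of $t$: since by hypothesis no set of fewer than $12T$ vertices meets every weight-$\ell$ $A$-$b(W)$-path, there is a linkage $\mathcal P$ of $T=3(24t)^2$ disjoint weight-$\ell$ $A$-$b(W)$-paths, each ending at a branch vertex of $W$. It now remains to route $t$ of these paths into a compact $r$-subwall, and here I would follow the proof of Lemma~\ref{nicelinkwalllemma}. Write each $P\in\mathcal P$ as an alternating concatenation of maximal subpaths disjoint from $V(W)$ and of its \emph{$W$-excursions} (maximal subpaths contained in $W$). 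By the previous paragraph every $W$-excursion of $P$ can be rerouted to any path of $W$ with the same endpoints, and the terminal excursion (the one ending at the branch vertex where $P$ ends) can moreover be rerouted to end at any prescribed branch vertex of $W$, all without changing $\gamma(P)=\ell$. Using the bound $s\ge(2r+1)(2T+1)$ — which guarantees many disjoint $1$-contained compact $r$-subwalls of $W$ together with a wide frame separating each from the rest — one extracts, as in \cite{BruUlm}, a sub-linkage $\mathcal P'\subseteq\mathcal P$ with $|\mathcal P'|=t$ and a $1$-contained compact $r$-subwall $W_1$ with top nails $N$ such that the $W$-excursions of the paths in $\mathcal P'$ can be rerouted clear of $W_1-N$ and their terminal excursions rerouted to end at distinct vertices of $N$ via the frame of $W$ around $W_1$, with the resulting $t$ paths pairwise disjoint and contained in $G-(W_1-N)$. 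These are disjoint weight-$\ell$ $A$-$W_1$-paths nicely linking to $W_1$, which is the conclusion.

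The main obstacle is this last step: one must simultaneously (i) choose $W_1$ and a size-$t$ sub-linkage whose $W$-excursion endpoints avoid $W_1-N$, (ii) route their terminal segments disjointly to distinct top nails of $W_1$ through the surrounding frame of $W$, and (iii) keep every path at weight exactly $\ell$. Point (iii) is handed to us for free by the shifting reduction above; points (i) and (ii) are the combinatorial core and proceed in parallel with the proof of Lemma~\ref{nicelinkwalllemma}, and it is there that the numerical bound $(2r+1)(2T+1)$ with $T=3(24t)^2$ is consumed.
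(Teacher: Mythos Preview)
Your shifting claim is not quite right. Shifting at a vertex $v\in V(B)$ adds the shift element once to the weight of any path having $v$ as an \emph{endpoint}; an $A$-$b(W)$-path has one endpoint in $b(W)\subseteq V(B)$, so its weight does change. The family of weight-$\ell$ $A$-$b(W)$-paths is therefore \emph{not} literally preserved. (What \emph{is} true, even without shifting, is that two paths in $G-A$ with the same endpoints in $V(B)$ have equal weight: by $\Gamma$-bipartiteness their weights sum to $0$, and each has order at most $2$, so they agree. Thus rerouting a $W$-excursion with fixed endpoints is weight-preserving --- but extending to a \emph{different} branch vertex need not be.) In fairness, the paper's own proof also tacitly assumes all $V(B)$-paths have weight $0$ (e.g.\ when it writes ``$\gamma(w_1Qx_i)=\gamma(x_iPx_m)=0$''), and indeed the lemma is only ever invoked inside Lemma~\ref{bip3blockApathslemma} \emph{after} that lemma performs the shifting; so this is arguably a defect of the lemma's statement. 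Still, your explicit assertion that nothing changes is false as written.

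The more substantive gap is the step you defer to ``as in \cite{BruUlm}'': it is neither what Lemma~\ref{nicelinkwalllemma} does nor what the paper does here, and the disjointness of your rerouted paths is not established. The paper does \emph{not} reroute the $T$ paths of $\mathcal P$ into a subwall. Instead it chooses $\mathcal P$ to minimise edges outside $W$, proves that $\mathcal P$ meets at most $2T$ of the $b(W)$-paths of $W$ (Claim~\ref{bWpathsclaim}), and thereby locates a compact $2r$-subwall $W_0$ entirely disjoint from $\bigcup\mathcal P$. Taking an $r$-subwall $W_1\subseteq W_0$, it then uses the linkage $\mathcal P$ together with the frame $W_0\setminus W_1$ to show that $H=G-(V(W_1)\setminus N_1)$ admits no small hitting set for weight-$\ell$ $A$-$N_1$-paths, and finally applies Lemma~\ref{lemmamengerellpaths} a \emph{second} time --- in $H$ augmented by weight-$0$ edges among $N_1$, so as to recover a $\Gamma$-bipartite $3$-block containing $N_1$ --- to produce the $t$ nicely-linking weight-$\ell$ paths. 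This second Menger application is precisely what guarantees disjointness without any delicate rerouting; your plan, by contrast, must keep $t$ rerouted paths pairwise disjoint while simultaneously threading the frame and landing on distinct top nails, and that does not simply ``proceed in parallel'' with the unweighted argument of Lemma~\ref{nicelinkwalllemma}.
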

\begin{proof}
By Lemma \ref{lemmamengerellpaths}, there exist $T$ disjoint $A$-$b(W)$-paths of weight $\ell$ in $(G,\gamma)$.
Let $\mcp$ be a linkage of $T$ such paths minimizing the number of edges in $\cup \mcp - E(W)$.

\begin{claim} \label{bWpathsclaim}
There are at most $T$ $b(W)$-paths $Q$ in $W$ such that $Q$ intersects $\cup \mcp$ and neither endpoint of $Q$ is in $\cup\mcp$.
\end{claim}
\begin{subproof}
Let $Q$ be a $b(W)$-path in $W$ with endpoints $w_1,w_2 \not\in V(\cup\mcp)$ and let $P=x_0x_1\dots x_m$ be a path in $\mcp$ with $x_0 \in A$ and $x_m \in b(W)$ such that $Q\cap P \neq \emptyset$.
We may choose $P$ and $x_i \in V(Q\cap P)$ such that $w_1Qx_i - x_i$ does not intersect $\cup \mcp$.

Suppose $x_0Px_i$ intersects $W-Q$.
Then $x_i \in V(B)$; otherwise, there is a 2-separation $(C,D)$ of $G-A$ such that $x_i \in V(C-D)$, $b(W) \subseteq V(D)$, and $V(C\cap D)=\{u,v\}$ where $u,v \in V(Q)$ and $x_i \in uQv$.
But both $x_0Px_i$ and $x_iPx_m$ intersect $W-Q$ and, therefore, they both contain one of the points in $\{u,v\}$.
Since one of $u$ or $v$ is in $w_1Qx_i-x_i$, this contradicts the assumption that $w_1Qx_i-x_i$ does not intersect $\cup \mcp$. 
Now $x_i \in V(B)$ implies that $\gamma(w_1Qx_i)=\gamma(x_iPx_m)=0$ since $(B,\gamma)$ is $\Gamma$-bipartite.
Thus $P':=x_0Px_iQw_1$ is an $A$-$b(W)$-path of weight $\ell$ disjoint from $\cup \mcp - P$ with fewer edges not in $W$, contradicting the minimality of $\mcp$.

Therefore, we may assume that $x_0Px_i$ does not intersect $W-Q$. 
In other words, $Q$ is the first $b(W)$-path in $W$ that $P$ intersects.
Since $|\mcp|=T$, there are at most $T$ such paths $Q$.
\end{subproof}

Let $W(\mcp)$ denote the set of vertices of $b(W)$ that are endpoints of a path in $\mcp$.
Let $S \subseteq b(W)$ be the vertex set obtained from $W(\mcp)$ by adding, for each $b(W)$-path $Q$ in $W$ whose interior intersects $\cup \mcp$, one endpoint of $Q$.
We have $|S|\leq 2T$ by Claim \ref{bWpathsclaim}.

Since $W$ is a wall of size at least $(2r+1)(2T+1)$, there are $2r+1$ consecutive horizontal paths and $2r+1$ consecutive vertical paths of $W$ that are all disjoint from $S$ and hence from $\cup \mcp$.
Let $W_0$ denote the compact $2r$-subwall of $W$ contained in the union of these $2r+1$ horizontal and vertical paths of $W$.
Let $W_1$ denote the compact $r$-subwall of $W_0$ disjoint from the first $r$ horizontal and vertical paths of $W_0$.
Let $N_1$ denote the set of top nails of $W_1$ and let $H$ denote the graph $G - (V(W_1)-N_1)$.

\begin{claim} \label{claim:AN1path}
There does not exist $Y \subseteq V(H)$ with $|Y| < 12t$ intersecting all $A$-$N_1$-paths of weight $\ell$ in $(H,\gamma)$.
\end{claim}
\begin{subproof}
Suppose to the contrary that $Y$ is such a hitting set.
Since $r \geq 12t$, there exists a horizontal path $Q^{(h)}$ and a vertical path $Q^{(v)}$ of $W$ intersecting $W_0$ and disjoint from $W_1$ and $Y$.
There also exists a vertical path $R^{(v)}$ of $W$ containing a vertex in $N_1$ that is disjoint from $Y$.

Since $|\mcp|=T=3(36t)^2 > 2(36t)^2+12t$, there exists $\mcp' \subseteq \mcp$ with $|\mcp'|=2(36t)^2$ such that each path in $\mcp'$ is disjoint from $Y$.
Then there exists $\mcp'' \subseteq \mcp'$ with $|\mcp''|= 36t$ such that the vertices of $W(\mcp'')$ either lie in distinct horizontal paths or in distinct vertical paths of $W$.

Suppose the vertices of $W(\mcp'')$ lie in distinct horizontal (resp. vertical) paths of $W$.
Since $|Y|<12t$, there are $24t$ distinct horizontal (resp. vertical) paths $Q_1,\dots,Q_{24t}$ of $W$, in this order in $W$ and disjoint from $Y$, such that $Q_i$ contains a vertex $w_i$ in $W(\mcp'')$.
Let $P_i$ denote the path in $\mcp''$ containing $w_i$ as an endpoint and let $a_i$ denote the endpoint of $P_i$ in $A$.

Let $y_i$ be the vertex in $P_i \cap Q_i$ that is closest to $Q^{(v)}$ (resp. $Q^{(h)}$) on the $w_i$-$Q^{(v)}$-subpath (resp. the $w_i$-$Q^{(h)}$-subpath) of $Q_i$.
If $y_i \in V(B)$, then $\gamma(a_iP_iy_i)=\ell$ and we obtain an $A$-$N_1$-path of weight $\ell$ in $(H-Y,\gamma)$ in $a_iP_iy_i \cup Q_i \cup Q^{(v)} \cup Q^{(h)} \cup R^{(v)}$, a contradiction.

So we may assume that $y_i \not\in V(B)$ for all $i \in [24t]$.
Then there is a 2-separation $(C_i,D_i)$ of $G-A$ with $y_i \in V(C_i-D_i)$, $V(B) \subseteq V(D_i)$, and $V(C_i\cap D_i)=\{x_i,z_i\}$ where $x_i,z_i \in V(Q_i) \cap V(B)$.
Assume without loss of generality that $w_i,x_i,y_i,z_i$ occur in this order on $Q_i$ (where possibly $w_i=x_i$ and $z_i \in Q^{(v)}$).
Then the $A$-$V(B)$-subpath of $P_i$ is contained in $G[C_i+a_i]$ and ends at $x_i$.
Let $P_i' = a_iP_ix_iQ_iw_i$.

Recall that $W$ is 1-contained in a wall $W'$.
For $i \in [24t]$, let $Q_i'$ denote the horizontal (resp. vertical) path of $W'$ containing $Q_i$.
For $i \in [12t]$, let $R_i$ be the $A$-$Q^{(v)}$-path (resp. $A$-$Q^{(h)}$-path) obtained from $P_{2i-1}'$ by continuing along $Q_{2i-1}'$ to the first or last vertical (resp. horizontal) path of $W'$, using it to reach $Q_{2i}'$, then going back along $Q_{2i}'$ to $Q^{(v)}$ (resp. $Q^{(h)}$).
See Figure \ref{figWallPath1}.

Then $\gamma(R_i)=\ell$ and $R_1,\dots,R_{12t}$ are disjoint.
Since $|Y|<12t$, some $R_i$ is disjoint from $Y$ and we thus obtain a contradictory $A$-$N_1$-path of weight $\ell$ in $(H-Y,\gamma)$ in $R_i \cup Q^{(v)} \cup Q^{(h)} \cup R^{(v)}$.
This completes the proof of the claim.
\end{subproof}

\begin{figure}
\centering
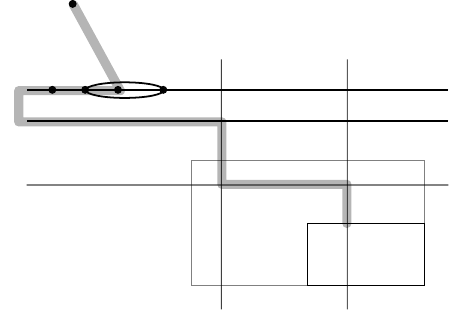
\caption{The highlighted path is an $A$-$N_1$-path of weight $\ell$ in $(H-Y,\gamma)$ as described in the proof of Claim \ref{claim:AN1path}. At least one such path is disjoint from $Y$.}
\label{figWallPath1}
\end{figure}

Let $(H',\gamma')$ be the $\Gamma$-labelled graph obtained from $(H,\gamma)$ by adding an edge between each pair of vertices in $N_1$ with label 0.
Let $(B',\gamma')$ be the 3-block of $(H',\gamma')$ containing $N_1$.
Then $(B',\gamma')$ is $\Gamma$-bipartite: if $C$ is a simple nonzero cycle in $(B',\gamma')$, then there are three disjoint $V(C)$-$N_1$-paths which give a nonzero $N_1$-path in $H$ by Lemma \ref{threepathscyclelemma}. But since $N_1\subseteq V(B)$, this contradicts the assumption that $(B,\gamma)$ is $\Gamma$-bipartite.

Applying Lemma \ref{lemmamengerellpaths} to $(H',\gamma')$ and $N_1$, we obtain $t$ disjoint $A$-$W_1$-paths that nicely link to $W_1$, completing the proof of the lemma.
\end{proof}

For each positive integer $k$, let $BR(k)$ denote the smallest integer $b$ such that any red-blue coloring of the edges of $K_{b,b}$ contains either a red $K_{k,k}$ or a blue $K_{k,k}$. 
These are called the \emph{Bipartite Ramsey numbers} and it is known that $BR(k) \leq (1+o(1))2^{k+1}\log k$ (see \cite{Con}).
We further define the following parameters:
\begin{align*}
t&=t(k) = 16BR(k) \\
T&=T(k)=3(36t)^2 \\
r_0&=r_0(k) = 12t \\
r_i&=r_i(k) = (2r_{i-1}+1)(2T+1)+2t  \qquad\text{for $i\geq 1$}.
\end{align*}

We now use Lemma \ref{lemmawallnicelink} to show that, in outcome (3) of Theorem \ref{flatwallundirectedtheorem}, we can find many disjoint zero $A$-paths.

\begin{lemma}\label{bip3blockApathslemma}
Let $\Gamma$ be a finite abelian group and let $k$ be a positive integer.
Then there exists an integer $\beta(k,|\Gamma|)$ such that the following holds:
If $(G,\gamma)$ is a $\Gamma$-labelled graph with $A \subseteq V(G)$ such that 
\begin{enumerate}[(I)]
	\item there does not exist $Y\subseteq V(G)$ with $|Y|<12T(k)|\Gamma|$ intersecting every zero $A$-path.
	\item $(G-A,\gamma)$ contains an $\beta(k,|\Gamma|)$-wall $W'$ inducing a tangle $\mct=\mct_{W'}$ in $G-A$ such that the $\mct$-large 3-block $(B,\gamma)$ of $(G-A,\gamma)$ is $\Gamma$-bipartite, and
	\item if $(C,D)\in\mct$, then $(G[A\cup C],\gamma)$ does not contain a zero $A$-path and $(G[A\cup (D-C)],\gamma)$ contains a zero $A$-path,
\end{enumerate}
then $(G,\gamma)$ contains $k$ disjoint zero $A$-paths.
\end{lemma}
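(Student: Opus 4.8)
The plan is to push the structure supplied by condition (II) through the Menger-type results of this section, Lemmas~\ref{lemmamengerellpaths} and~\ref{lemmawallnicelink}, and then to assemble the zero $A$-paths by routing through the $\Gamma$-bipartite wall as in Lemmas~\ref{oddwallApathslemma} and~\ref{bipwalloddlinkageApathslemma}. To begin, I would choose $\beta(k,|\Gamma|)$ large enough (using Theorem~\ref{tanglewallthm} and the standard fact that the branch vertices of a sufficiently deep compact subwall of $W'$ lie in the $\mct$-large $3$-block) that $W'$ contains a $1$-contained compact subwall $W_0$ of size $r_{2|\Gamma|}(k)$ with $b(W_0)\subseteq V(B)$ and with $\mct_{W_0}$ a truncation of $\mct$. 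Since $(B,\gamma)$ is $\Gamma$-bipartite and $B\cap A=\emptyset$, Lemma~\ref{lemma3connshiftequivalent} and the remark following it let us perform shifting operations at vertices of $B$ — which change neither the weight of any cycle nor the weight of any $A$-path, since no $A$-path has an endpoint in $B$ — after which every $B$-path in $(G-A,\gamma)$, and in particular every $b(W_0)$-path of the now $\Gamma$-bipartite wall $(W_0,\gamma)$, has weight $0$.

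Two reductions then follow from conditions (I) and (III). First, no zero $A$-path avoids $V(B)$: if $P$ were such a path, then $P$ minus its endpoints would lie inside a single $B$-bridge $\mcb$ of $G-A$, and for the separation $(C,D)$ of $G-A$ with $C=\mcb$ and $D$ the rest — which has order at most $2$ and lies in $\mct$, since $D$ still contains a horizontal path of $W'$ — we would get $P\subseteq G[A\cup C]$, contradicting (III). Hence, writing $b_1,b_2$ for the first and last vertices of a zero $A$-path $P$ in $V(B)$, the path $P$ splits into an $A$-$V(B)$-path of some weight $\alpha\in\Gamma$, the subpath $b_1Pb_2$ which is a concatenation of $B$-paths and so has weight $0$, and an $A$-$V(B)$-path of weight $-\alpha$. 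Second, exactly as in the proof of Lemma~\ref{oddKtApathslemma}, conditions (I) and (III) together imply that no set of size less than $12T(k)|\Gamma|$ separates $A$ from $W_0$ or from any subwall of it of size at least $3(24t(k))^2$, so the separator outcomes of Lemmas~\ref{lemmamengerellpaths},~\ref{lemmawallnicelink}, and~\ref{nicelinkwalllemma} are unavailable to us.

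Next I would iterate over the elements of $\Gamma$, grouped into the pairs $\{\alpha,-\alpha\}$, peeling subwalls according to the recursion $r_i=(2r_{i-1}+1)(2T+1)+2t$ so that a large unused subwall remains at each step. For a pair $\{\alpha,-\alpha\}$, apply Lemma~\ref{lemmawallnicelink} with $\ell=\alpha$ in the current subwall; if its separator outcome holds, record the resulting set $X_\alpha$ of size $<12T(k)$ and pass to the next pair, and otherwise obtain $t(k)$ (or more, by taking the parameter larger) disjoint $A$-paths of weight $\alpha$ nicely linking to a subwall, then apply Lemma~\ref{lemmawallnicelink} again with $\ell=-\alpha$ in a fresh disjoint subwall. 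If for some $\alpha$ — in particular for any $\alpha$ with $2\alpha=0$, including $\alpha=0$ — both applications succeed, we finish: as in the rerouting claim of Lemma~\ref{bipwalloddlinkageApathslemma} we make the two linkages mutually disjoint at the cost of a bounded number of paths, noting that the relevant weights are preserved because the rerouting can be confined to weight-$0$ $B$-paths and $b(W)$-paths; then a bipartite Ramsey argument on the nails carrying the surviving paths — this is where $t(k)=16\,BR(k)$ is used — yields $k$ weight-$\alpha$ paths and $k$ weight-$(-\alpha)$ paths whose attachment nails can be joined in pairs by disjoint paths through the weight-$0$ wall, and concatenation produces $k$ disjoint zero $A$-paths. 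For $\alpha$ with $2\alpha=0$ one instead pairs up $2k\le t(k)$ of the weight-$\alpha$ paths directly, each pair closing up with total weight $2\alpha=0$, and no second application is needed.

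If the iteration finishes without yielding $k$ disjoint zero $A$-paths, then for every pair $\{\alpha,-\alpha\}$ at least one of the two applications recorded a set $X_\alpha$ of size $<12T(k)$ hitting the relevant $A$-$b(W)$-paths of that weight, and every $\alpha$ with $2\alpha=0$ did as well. Put $Y=\bigcup_\alpha X_\alpha$, so $|Y|<12T(k)|\Gamma|$. Given any zero $A$-path $P$, the decomposition above produces an $A$-$V(B)$-subpath of some weight $\alpha$ and one of weight $-\alpha$; extending the first (or the second) of these along a weight-$0$ $B$-path to a branch vertex of the final subwall turns it into an $A$-$b(W)$-path of weight $\alpha$ (resp.\ $-\alpha$), and one of these is a weight for which we recorded a hitting set, so $P$ meets $Y$. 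Thus $Y$ intersects every zero $A$-path, contradicting (I). I expect the main obstacle to be the bookkeeping of the iteration — guaranteeing that after each application of Lemma~\ref{lemmawallnicelink} a sufficiently large subwall disjoint from all previously produced linkages survives, which is precisely what the recursion defining $r_i$ is calibrated for — together with the two places where the $\Gamma$-bipartiteness of $B$ must be used with care: keeping the rerouting weight-preserving, and bridging between $A$-$b(W)$-paths (what the lemmas control) and the arbitrary $A$-$V(B)$-paths that appear inside a zero $A$-path, via the weight-$0$ $B$-paths obtained from shifting.
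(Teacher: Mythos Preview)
Your plan follows the paper's proof closely: the iteration over weights, the use of Lemma~\ref{lemmawallnicelink} to produce nicely linking $A$-linkages of weights $\ell$ and $-\ell$ into nested subwalls, and the bipartite Ramsey finish are all present. The organization differs only cosmetically (you iterate explicitly over the pairs $\{\alpha,-\alpha\}$, whereas the paper iterates $|\Gamma|+1$ times, at each step obtaining \emph{some} $\ell_i$ that works for both signs, and then pigeonholes to find a repeat).

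There is, however, a genuine gap in your final paragraph. You argue that if $P$ is a zero $A$-path with end segments of weights $\pm\alpha$, then extending one end segment along a weight-$0$ $B$-path produces an $A$-$b(W^*)$-path of weight $\alpha$, which must meet the recorded set $X_\alpha\subseteq Y$, ``so $P$ meets $Y$''. But the intersection might lie entirely in the extension, not in $P$. What you actually need is the contrapositive: if $P\subseteq G-X_\alpha$, then $G-X_\alpha$ still contains an $A$-$b(W^*)$-path of weight $\alpha$, contradicting the choice of $X_\alpha$. For this the connecting path from $V(P)\cap V(B)$ to $b(W^*)$ must be found \emph{inside} $G-A-X_\alpha$, and a single weight-$0$ $B$-path need not avoid $X_\alpha$. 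The fix is a short tangle argument using condition~(III), not $\Gamma$-bipartiteness: either there are two disjoint $(V(P)\cap V(B))$--$b(W^*)$-paths in $G-A-X_\alpha$, whose union with $P$ contains the desired weight-$\alpha$ path, or there is a $1$-separation of $G-A-X_\alpha$ with $V(P)\cap V(B)$ on one side and $b(W^*)\setminus X_\alpha$ on the other; adding $X_\alpha$ back gives a separation in $\mct$ of order less than $12T$ with $P\subseteq G[A\cup C]$, contradicting~(III). This is exactly the content of the paper's Claim~\ref{claimendsegments}, and it is where~(III) does the work you attribute to the shifting.

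One smaller point on the ``both applications succeed'' case: bipartite Ramsey does not directly yield $k$ disjoint pairs. It gives either $k$ weight-$\alpha$ paths pairwise disjoint from $k$ weight-$(-\alpha)$ paths (and then you route through the wall as you describe), or $k$ of each such that every $\alpha$-path meets every $(-\alpha)$-path; in the latter case one finishes by Menger's theorem on their union together with the wall. Before applying Ramsey the paper also thins the two linkages so that distinct initial segments lie in distinct $B$-bridges and avoid each other's interiors --- otherwise the resulting $A(\mcp)$-$A(\mcq)$-paths need not have weight~$0$.
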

\begin{proof}
Define $s=s(k,|\Gamma|) = r_{|\Gamma|+1}$.
We show that $\beta(k,|\Gamma|) = s+2$ suffices.
Suppose $G-A$ contains an $(s+2)$-wall $W'$ with $\mct=\mct_{W'}$ satisfying (II) and (III).
Since $(B,\gamma)$ is the $\mct$-large 3-block of $(G-A,\gamma)$, every vertex of degree 3 in $W'$ is in $V(B)$.
Let $W$ be the $s$-wall that is 1-contained in $W'$ with the natural choice of corners and nails, so that $b(W)\subseteq V(B)$.

Since $(B,\gamma)$ is $\Gamma$-bipartite, we may assume by Lemma \ref{lemma3connshiftequivalent} that after possibly shifting at vertices in $V(G)-A$, every $V(B)$-path in $(G-A,\gamma)$ has weight 0.
Let $P$ be a zero $A$-path in $(G,\gamma)$.
Then $P$ intersects $V(B)$ in at least two vertices, since otherwise there would be a 3-separation $(C,D) \in \mct$ such that $P \subseteq G[A\cup C]$, violating (III).
In particular, $P$ contains two disjoint end segments whose weights are $\ell$ and $-\ell$ for some $\ell \in \Gamma$. 

\begin{claim} \label{claimendsegments}
Let $\ell \in \Gamma$ and let $W^*$ be a compact $r$-subwall of $W$ such that $r \geq 12T$. 
Let $X\subseteq V(G)$ with $|X|<12T$.
If $(G-X,\gamma)$ does not contain an $A$-$b(W^*)$-path of weight $\ell$, then $(G-X,\gamma)$ does not contain a zero $A$-path whose end segments have weights $\pm\ell$.
\end{claim}
\begin{subproof}
Suppose $P$ is a zero $A$-path in $(G-X,\gamma)$ whose end segments have weights $\pm \ell$, and let $B(P) = V(P) \cap V(B)$. 
If there exist two disjoint $B(P)$-$b(W^*)$-paths in $G-A-X$, then the union of these two paths and $P$ contains an $A$-$b(W^*)$-path of weight $\ell$ in $(G-X,\gamma)$ and we are done.
Otherwise, there exists a 1-separation $(C,D)$ in $G-A-X$ with $B(P) \subseteq V(C)$ and $b(W^*)-X \subseteq V(D)$.

Consider the $12T$-separation $(G[C\cup (X-A)],G[D\cup (X-A)])$ in $G-A$.
Since $W^*$ has size $r \geq 12T$ and $b(W^*) \subseteq V(G[D\cup (X-A)])$, we have $(G[C\cup (X-A)],G[D\cup (X-A)]) \in \mct_{W^*} \subseteq \mct$. But $P \subseteq G[A\cup C\cup X]$, violating (III).
\end{subproof}

\begin{claim} \label{claimnicelink}
Let $W^\circ$ be a compact $r$-subwall of $W$ such that $r\geq r_i$ for some $i\geq 1$.
Then there exists $\ell \in \Gamma$ such that, for any choice of $\ell^\circ \in \{\ell,-\ell\}$, there is a compact $t$-contained $r_{i-1}$-subwall $W^\circ_1$ of $W^\circ$ such that there are $t$ disjoint $A$-$W^\circ_1$-paths of weight $\ell^\circ$ that nicely link to $W^\circ_1$.
\end{claim}
\begin{subproof}
Let $W_0^\circ$ be a $t$-contained $(2r_{i-1}+1)(2T+1)$-subwall of $W^\circ$.
The size of $W_0^\circ$ is clearly greater than $12T$.
Suppose that for every $\ell \in \Gamma$, there exists $X_\ell\subseteq V(G)$ with $|X_\ell|<12T$ such that either $X_\ell$ intersects every $A$-$b(W_0^\circ)$-path of weight $\ell$ or $X_\ell$ intersects every $A$-$b(W_0^\circ)$-path of weight $-\ell$.
Then by Claim \ref{claimendsegments} (applied to $W_0^\circ$, $\ell$, and $X_\ell$ for every $\ell \in \Gamma$), $Y:=\bigcup_{\ell \in \Gamma}X_\ell$ intersects every zero $A$-path and $|Y|<12T|\Gamma|$, violating (I).
So there exists $\ell\in\Gamma$ for which such a set $X_\ell$ does not exist.

Let $\ell^\circ \in \{\ell,-\ell\}$.
We have shown above that there does not exist $X \subseteq V(G)$ with $|X|<12T$ intersecting every $A$-$b(W_0^\circ)$-path of weight $\ell^\circ$.
By Lemma \ref{lemmawallnicelink} (applied to $W_0^\circ$, $\ell^\circ$, $r$, and $t$), we obtain a compact $r_{i-1}$-subwall $W_1^\circ$ of $W_0^\circ$ such that there are $t$ disjoint $A$-$W_1^\circ$-paths of weight $\ell^\circ$ that nicely link to $W_1^\circ$.
\end{subproof}

We apply Claim \ref{claimnicelink} repeatedly starting with $W$ to obtain a sequence of elements $\ell_1,\dots,\ell_{|\Gamma|+1}$, subwalls $W \supseteq W_1 \supseteq \dots \supseteq W_{|\Gamma|+1}$, and linkages $\mcp_1,\dots,\mcp_{|\Gamma|+1}$ such that $\mcp_i$ is a set of $t$ disjoint $A$-$W_i$-paths of weight $\ell_i$ that nicely links to $W_i$.
We have $\ell_i=\ell_j$ for some $i<j$ and, since we are free to choose either $\ell_j$ or $-\ell_j$ at the $j$-th iteration of Claim \ref{claimnicelink}, we may assume that $\ell_j = -\ell_i$ for some $i<j$.
We can then extend the linkages $\mcp_i$ and $\mcp_j$ through $W_i$ and $W_j$ respectively so that they link nicely to $W_{|\Gamma|+1}$ (since $W_{|\Gamma|+1}$ is $t$-contained in each of the previous walls).
Note that $W_{|\Gamma|+1}$ has size $r_0 = 12t$.

Renaming, we have thus obtained a $12t$-wall $W_*$ and two linkages $\mcp$ and $\mcq$ of $A$-$W_*$-paths of weight $\ell$ and $-\ell$ respectively that nicely link to $W_*$, with $|\mcp|=|\mcq|=t$.
For an $A$-$W_*$-linkage $\mcr$, let $A(\mcr)$ and $W_*(\mcr)$ denote the sets of endpoints of $\mcr$ in $A$ and $W_*$ respectively.

Recall that $t=16BR(k)$ where $BR(k)$ is the bipartite Ramsey number.
The $16BR(k)$ paths of $\mcp$ (resp. $\mcq$) contain a set $\mcp^1 \subseteq \mcp$ (resp. $\mcq^1 \subseteq \mcq$) with $|\mcp^1|=|\mcq^1|=8BR(k)$ such that no $B$-bridge of $(G-A,\gamma)$ contains the initial segments of two paths in $\mcp^1$ (resp. $\mcq^1$).

Now take an arbitrary subset $\mcp^2 \subseteq \mcp^1$ with $|\mcp^2|=4BR(k)$.
Then the interior of the initial segment of each path in $\mcp^2$ intersects at most one path in $\mcq^1$, so there is a subset $\mcq^2 \subseteq \mcq^1$ with $|\mcq^2|=4BR(k)$ such that no path in $\mcq^2$ intersects the interiors of initial segments of paths in $\mcp^2$.
Similarly, take a subset $\mcq^3 \subseteq \mcq^2$ with $|\mcq^3|=2BR(k)$ and choose a subset $\mcp^3\subseteq \mcp^2$ with $|\mcp^3|=2BR(k)$ such that no path in $\mcp^3$ intersects the interiors of initial segments of paths in $\mcq^3$.

We may then choose $\mcp^4 \subseteq \mcp^3$ and $\mcq^4 \subseteq \mcq^3$ with $|\mcp^4|=|\mcq^4|=BR(k)$ such that each vertex in $A$ belongs to at most one path in $\mcp^4 \cup \mcq^4$.
Now each $A(\mcp^4)$-$A(\mcq^4)$-path in $(\cup \mcp^4) \cup (\cup \mcq^4) \cup W_*$ consists of an initial segment of a path in $\mcp^4$ (which has weight $\ell$), an initial segment of a path in $\mcq^4$ (which has weight $-\ell$), and a sequence of $V(B)$-paths (which have weight 0). Hence every $A(\mcp^4)$-$A(\mcq^4)$-path in $(\cup \mcp^4) \cup (\cup \mcq^4) \cup W_*$ is a zero $A$-path.

If there exist linkages $\mcp^5 \subseteq \mcp^4$ and $\mcq^5 \subseteq \mcq^4$ with $|\mcp^5|=|\mcq^5|=k$ such that the paths in $\mcp^5\cup\mcq^5$ are disjoint, then we obtain $k$ disjoint zero $A$-paths by linking $W_*(\mcp^5)$ to $W_*(\mcq^5)$ through $W_*$.
Otherwise, by the definition of $BR(k)$, there exist linkages $\mcp^5 \subseteq \mcp^4$ and $\mcq^5 \subseteq \mcq^4$ with $|\mcp^5|=|\mcq^5|=k$ such that every path in $\mcp^5$ intersects every path of $\mcq^5$.
Let $H = (\cup \mcp^5) \cup (\cup \mcq^5)$.

Since $H \subseteq (\cup \mcp^4) \cup (\cup \mcq^4) \cup W_*$, every $A(\mcp^5)$-$A(\mcq^5)$-path in $H$ is an $A(\mcp^4)$-$A(\mcq^4)$-path in $(\cup \mcp^4) \cup (\cup \mcq^4) \cup W_*$, hence a zero $A$-path.
Suppose there exists $Z\subseteq V(H)$ with $|Z|<k$ separating $A(\mcp^5)$ from $A(\mcq^5)$ in $H$. 
Then $Z$ is disjoint from at least one path in $\mcp^5$ and at least one path in $\mcq^5$.
But since these two paths intersect, their union contains an $A(\mcp^5)$-$A(\mcq^5)$-path, a contradiction.
Thus, by Menger's theorem, there exist $k$ disjoint $A(\mcp^5)$-$A(\mcq^5)$-paths in $H$, and since every $A(\mcp^5)$-$A(\mcq^5)$-path is a zero $A$-path, we have obtained $k$ disjoint zero $A$-paths as desired.
\end{proof}


\subsection{Proof of Theorem \ref{apathsmodptheorem}}
\label{sec:proof}

We are now ready to prove Theorem \ref{apathsmodptheorem}.

\begin{theorem}
[Theorem \ref{apathsmodptheorem} restated]
Let $p$ be an odd prime and let $\Gamma = \mbz/p\mbz$.
Then $\Gamma$-zero $A$-paths satisfy the Erd\H{o}s-P\'osa property.
\end{theorem}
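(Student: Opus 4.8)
The plan is to exhibit an Erd\H{o}s-P\'osa function $f$ for $\Gamma$-zero $A$-paths; since the $A$-paths of length $0$ modulo $p$ are exactly the $\Gamma$-zero $A$-paths under the all-ones $\mbz/p\mbz$-labelling, this proves the theorem. I would pass to a minimal counterexample, extract a large tangle in $G-A$ and hence a large wall, and feed this wall to the structure theorem (Theorem~\ref{flatwallundirectedtheorem}); its three outcomes are then handled by Lemma~\ref{oddKtApathslemma} (outcome~(1)), by Lemmas~\ref{oddwallApathslemma} and \ref{bipwalloddlinkageApathslemma} (outcomes~(2a) and (2b)), and by Lemma~\ref{bip3blockApathslemma} (outcome~(3)).

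First I would fix, for each $k$ (with $p$ fixed), parameters as follows. Pick $r=r(k)\geq 18kp$, which makes Lemmas~\ref{oddwallApathslemma} and \ref{bipwalloddlinkageApathslemma} applicable to a $50r^{12}$-wall with a pure $\Gamma$-odd linkage of size $r$; let $t_0=t_0(k)=5kp$, so that Lemma~\ref{oddKtApathslemma} applies to a $\Gamma$-odd $K_{t_0}$-model; and put $N=N(k)=\max\{\, g(r,t_0),\ (h(r,t_0)+1)(\beta(k,p)+h(r,t_0)+3) \,\}$, where $g,h$ come from Theorem~\ref{flatwallundirectedtheorem} and $\beta$ from Lemma~\ref{bip3blockApathslemma}; the second entry guarantees that an $N$-wall with any $h(r,t_0)$ vertices removed still contains a $(\beta(k,p)+h(r,t_0)+3)$-subwall. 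With $\tau=\tau(k)=\omega(N)$ as in Theorem~\ref{tanglewallthm}, let $f(1)=0$ and, for $k\geq 2$, let $f(k)=2f(k-1)+3\tau(k)+h(r(k),t_0(k))+12T(k)p+108k^2+100$. This is large enough for Lemma~\ref{lemmatangleApaths} and for every hitting-set bound used below.

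Now suppose $f$ is not an Erd\H{o}s-P\'osa function and let $((G,\gamma),k)$ be a minimal counterexample. For $k=1$ the empty set meets every zero $A$-path when there is none, while any single zero $A$-path is a packing of size one, so $k\geq 2$. By minimality no set of at most $f(k)$ vertices meets every zero $A$-path, hence none of size $<108k^2$ nor of size $<12T(k)p$. By Lemma~\ref{lemmatangleApaths} with $t=\tau(k)$ (applicable since $f(k)\geq 2f(k-1)+3\tau(k)+10$), $G-A$ admits a tangle $\mct$ of order $\tau(k)$ such that for every $(C,D)\in\mct$, $(G[A\cup C],\gamma)$ has no zero $A$-path while $(G[A\cup(D-C)],\gamma)$ has one; in particular conditions (a) and (b) of Section~\ref{sec:nzcyclechains} hold. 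By Theorem~\ref{tanglewallthm}, $G-A$ contains an $N$-wall $W$, disjoint from $A$, with $\mct_W$ a truncation of $\mct$, and I would apply Theorem~\ref{flatwallundirectedtheorem} to $(G-A,\gamma)$ with parameters $r,t_0$. In outcome~(1) one gets a $\Gamma$-odd $K_{5kp}$-model in $G-A$ whose induced tangle truncates $\mct$, and Lemma~\ref{oddKtApathslemma} yields $k$ disjoint zero $A$-paths; in outcome~(2a) a facially $\Gamma$-odd $50r^{12}$-wall in $G-A$ does the same via Lemma~\ref{oddwallApathslemma}; in outcome~(2b) a $\Gamma$-bipartite $50r^{12}$-wall in $G-A$ with a pure $\Gamma$-odd linkage of size $r\geq 18kp$ does so via Lemma~\ref{bipwalloddlinkageApathslemma}. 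Each of these contradicts the choice of $((G,\gamma),k)$.

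The remaining case, outcome~(3), is where I expect the only real care to be required: it supplies $Z\subseteq V(G)\setminus A$ with $|Z|\leq h(r,t_0)$ such that the $\mct_W$-large $3$-block of $(G-A-Z,\gamma)$ is $\Gamma$-bipartite. The plan is to set $G'=G-Z$ and, since $W$ has size $N$, choose a subwall $W^*$ of $W$ of size at least $\beta(k,p)$ disjoint from $Z$; one then checks, using standard facts about tangles and $3$-blocks (in particular that the branch vertices of a sufficiently deep subwall lie in the $\mct_W$-large $3$-block), that the $\mct_{W^*}$-large $3$-block of $(G'-A,\gamma)$ is exactly the $\Gamma$-bipartite block furnished by outcome~(3), so that $(G',\gamma)$ together with $W^*$ satisfies conditions (II) and (III) of Lemma~\ref{bip3blockApathslemma} (condition~(III) being inherited from (b), as $G'[A\cup C]=G[A\cup C]$ for $(C,D)\in\mct_{W^*}$). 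Condition~(I) holds for $(G',\gamma)$ too, since a set $Y\subseteq V(G')$ with $|Y|<12T(k)p$ meeting every zero $A$-path of $G'$ would make $Y\cup Z$ meet every zero $A$-path of $G$ with $|Y\cup Z|<12T(k)p+h(r,t_0)\leq f(k)$, contradicting minimality. Lemma~\ref{bip3blockApathslemma} then produces $k$ disjoint zero $A$-paths in $(G',\gamma)\subseteq(G,\gamma)$, the final contradiction, so $f$ is an Erd\H{o}s-P\'osa function. The substance of the argument lives in the four extraction lemmas already proved; the main obstacle in stitching them together is the outcome~(3) bookkeeping sketched above---carrying the $\Gamma$-bipartite $3$-block across the deletion of $Z$ while confirming that $G-Z$ still admits no small hitting set for zero $A$-paths---which goes through precisely because $|Z|$ is bounded in terms of $r$ and $t_0$ and $f(k)$ was set up to absorb it.
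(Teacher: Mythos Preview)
Your proposal is correct and follows essentially the same approach as the paper: extract a tangle and wall via Lemma~\ref{lemmatangleApaths} and Theorem~\ref{tanglewallthm}, apply Theorem~\ref{flatwallundirectedtheorem}, and dispatch the four outcomes with Lemmas~\ref{oddKtApathslemma}, \ref{oddwallApathslemma}, \ref{bipwalloddlinkageApathslemma}, and \ref{bip3blockApathslemma}. Your parameter choices differ cosmetically from the paper's (you take a max and a product for the wall size where the paper takes a sum $\varphi(k)=g(r_*,t_*)+h(r_*,t_*)+\beta(k,p)$), and you are if anything more explicit than the paper about the outcome-(3) bookkeeping of passing to $G-Z$ and matching up the $3$-blocks and tangles.
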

\begin{proof}
For each positive integer $k$ define $r_*=r_*(k)=18kp$ and $t_*=t_*(k)=5kp$.
Let $\omega, g, h, \beta$ be the functions given by Theorem \ref{tanglewallthm}, Theorem \ref{flatwallundirectedtheorem}, and Lemma \ref{bip3blockApathslemma}.
Define $\varphi(k) = g(r_*,t_*)+h(r_*,t_*)+\beta(k,p)$.
Let $f:\mbn\to\mbn$ be a function such that $f(k) \geq 2f(k-1)+3\omega(\varphi(k))+10$ and $f(k)\geq h(r_*,t_*) + 12T(k)p + 108k^2$, where $T(k)$ is the function appearing in condition (I) of Lemma \ref{bip3blockApathslemma}.

Let $((G,\gamma),k)$ with $A \subseteq V(G)$ be a minimal counterexample to $f$ being an Erd\H{o}s-P\'osa function for zero $A$-paths.
That is, $(G,\gamma)$ does not contain $k$ disjoint zero $A$-paths, there does not exist $X \subseteq V(G)$ with $|X|\leq f(k)$ intersecting every zero $A$-path, and subject to these two conditions, $k$ is minimum.

By Lemma \ref{lemmatangleApaths}, $G-A$ admits a tangle $\mct$ of order $\omega(\varphi(k))$ such that for each $(C,D) \in \mct$, $G[A\cup C]$ does not contain a zero $A$-path and $G[A\cup (D-C)]$ contains a zero $A$-path.
By Theorem \ref{tanglewallthm}, $G-A$ contains a $\varphi(k)$-wall $W$ such that $\mct_W$ is a truncation of $\mct$.
We apply Theorem \ref{flatwallundirectedtheorem} to $(W,\gamma)$, $r_*$, and $t_*$ and obtain one of its outcomes.

In outcomes (1) and (2), note that conditions (a) and (b) of section \ref{sec:nzcyclechains} are satisfied.
In outcome (1), there exists a $\Gamma$-odd $K_{5kp}$-model $\mu$ in $(G-A,\gamma)$ such that $\mct_\mu$ is a truncation of $\mct_W$.
Lemma \ref{oddKtApathslemma} implies that $(G,\gamma)$ contains $k$ disjoint zero $A$-paths.
In outcome (2), we have a $50r_*^{12}$-wall $(W_0,\gamma)$ in $(G-A,\gamma)$ such that $\mct_{W_0}$ is a truncation of $\mct_W$.
Note that $50r_*^{12} \geq 2664k^3p$.
In outcome (2)-(a), $(W_0,\gamma)$ is facially $\Gamma$-odd and by Lemma \ref{oddwallApathslemma}, $(G,\gamma)$ contains $k$ disjoint zero $A$-paths.
In outcome (2)-(b), $(W_0,\gamma)$ is a $\Gamma$-bipartite wall with a pure $\Gamma$-odd linkage of $(W_0,\gamma)$ of size $r_*=18kp$, and by Lemma \ref{bipwalloddlinkageApathslemma}, $(G,\gamma)$ again contains $k$ disjoint zero $A$-paths.
In all cases, we obtain $k$ disjoint zero $A$-paths, contradicting the assumption that $((G,\gamma),k)$ is a minimal counterexample.

Therefore outcome (3) holds and there exists $Z \subseteq V(G-A)$ with $|Z|\leq h(r_*,t_*)$ such that the $\mct$-large 3-block of $(G-A-Z,\gamma)$ is $\Gamma$-bipartite.
Since $W$ has size $\varphi(k)=g(r_*,t_*)+h(r_*,t_*)+\beta(k,p)$, there is a $\beta(k,p)$-subwall $W_1$ of $W$ in $(G-A-Z,\gamma)$.

Then $(G-Z,\gamma)$ and $W_1$ satisfy the three hypotheses of Lemma \ref{bip3blockApathslemma}. Indeed, since $\mct_{W_1}$ is a truncation of $\mct$, the $\mct$-large 3-block of $(G-A-Z,\gamma)$ is also $\mct_{W_1}$-large, so hypothesis (II) holds. Similarly, every separation in $\mct_{W_1}$ is also in $\mct$, so (III) holds as well.
Furthermore, since $f(k)\geq h(r_*,t_*)+12T(k)p$ and $(G,\gamma)$ does not contain a hitting set of size less than $f(k)$, we have that $(G-Z,\gamma)$ does not contain a hitting set $Y$ with $|Y|<12T(k)p$, satisfying hypothesis (I).
By Lemma \ref{bip3blockApathslemma}, $(G-Z,\gamma)$ contains $k$ disjoint zero $A$-paths, a contradiction.
\end{proof}

\subsection*{Acknowledgements}
We thank the anonymous referees for carefully reading the manuscript and providing detailed feedback.
The second author thanks Paul Wollan for helpful discussions.

\baselineskip 11pt
\vfill
\noindent
This material is based upon work supported by the National Science Foundation.
Any opinions, findings, and conclusions or
recommendations expressed in this material are those of the authors and do
not necessarily reflect the views of the National Science Foundation.

\end{document}